\newtheorem{theorem}{Theorem}[section]
\newtheorem*{theorem*}{Theorem}
\newtheorem{lemma}[theorem]{Lemma}
\newtheorem{proposition}[theorem]{Proposition}
\newtheorem{corollary}[theorem]{Corollary}
\theoremstyle{definition}
\newtheorem{definition}[theorem]{Definition}
\newtheorem{example}[theorem]{Example}
\theoremstyle{plain}
\newcommand{\R}{\mathbb{R}}
\newcommand{\N}{\mathbb{N}}
\newcommand{\Z}{\mathbb{Z}}
\newcommand{\E}{\mathbb{E}}
\newcommand{\Prob}{\mathbb{P}}
\renewcommand{\P}[1]{{\mathbb{P}}\left[#1\right]}
\newcommand{\PP}[2]{{\mathbb{P}}_{#1}\left[#2\right]}
\def\eps{{\epsilon}}
\def\bbe{{\bf e}}
\def\bbf{{\bf f}}
\newcommand{\subLP}{P_{\mathrm{Sub}}}
\newcommand{\EE}[2]{{\mathbb{E}}_{#1}\left[#2\right]}
\DeclareMathOperator{\cpc}{Cap}
\DeclareMathOperator{\per}{per}
\DeclareMathOperator{\Mat}{Mat}
\DeclareMathOperator{\NHMat}{NHMat}
\DeclareMathOperator{\NHProd}{NHProd}
\DeclareMathOperator{\NHStab}{NHStab}
\title{From Trees to Polynomials and Back Again: \\ New Capacity Bounds with Applications to TSP}
\author{Leonid Gurvits, Nathan Klein, and Jonathan Leake}
\begin{document}


\maketitle

\begin{abstract}
    We give simply exponential lower bounds on the probabilities of a given strongly Rayleigh distribution, depending only on its expectation. This resolves a weak version of a problem left open by Karlin-Klein-Oveis~Gharan in their recent breakthrough work on metric TSP, and this resolution leads to a minor improvement of their approximation factor for metric TSP. Our results also allow for a more streamlined analysis of the algorithm.

    To achieve these new bounds, we build upon the work of Gurvits-Leake on the use of the productization technique for bounding the capacity of a real stable polynomial. This technique allows one to reduce certain inequalities for real stable polynomials to products of affine linear forms, which have an underlying matrix structure. In this paper, we push this technique further by characterizing the worst-case polynomials via bipartitioned forests. This rigid combinatorial structure yields a clean induction argument, which implies our stronger bounds.
    
    In general, we believe the results of this paper will lead to further improvement and simplification of the analysis of various combinatorial and probabilistic bounds and algorithms.
\end{abstract}


\section{Introduction}

The theory of real stable and log-concave polynomials has seen numerous applications in combinatorics and theoretical computer science (TCS). This includes bounds and approximation algorithms for various combinatorial quantities \cite{Gur07VdW,Barv16,AOG17,SV17,AMOGV18,AOGV18i,AASV21,BLP23}, proofs of long-standing log-concavity and sampling conjectures related to matroids \cite{AHK18,ALOGV18iii,ALOGV19ii,BH20}, proofs of the Kadison-Singer conjecture and generalizations \cite{MSS15,AOG14KS,Bran18KS}, an improved approximation factor for the traveling salesperson problem (TSP) \cite{OGSS11,KKO21,KKO23}, and many more. 
The power of these polynomial classes comes from two features: (1) their \emph{robustness}, shown in the fact that many natural operations preserve these log-concavity properties, and (2) their \emph{convex analytic properties}, which can be used to prove bounds and other analytic statements. The typical way these polynomials are utilized is by encoding combinatorial objects as real stable and log-concave polynomials, which essentially allows these operations and convexity properties to automatically transfer to the combinatorial objects. This idea, while simple, has led to important breakthroughs in combinatorics, TCS, and beyond.

For example, \cite{Gur07VdW} utilized real stable polynomials to give a new proof of the Van der Waerden conjecture on the permanent of a doubly stochastic matrix (originally due to \cite{Egor81,Fal81}). This proof led to a vast generalization of Van der Waerden bound, including an improved Schrijver's bound for regular bipartite graphs \cite{Gur07VdW}, an analogous bound for mixed discriminants \cite{Gur06MD}, and an analogous bound for mixed volumes that led to the development of strongly log-concave polynomials \cite{Gur09MV,Gur09}. One reason the original bound was historically so difficult to prove is a lack of a usable inductive structure coming from the matrices themselves. One of the key insights of the new proof was to use the simple inductive structure of real stable polynomials given by partial derivatives. By encoding the matrices as polynomials, the correct induction becomes straightforward, and the bound follows from a simple calculus argument.


More recently, the approximation factor improvement for the metric traveling salesperson problem (TSP) crucially utilized real stable polynomials \cite{KKO21,KKO23}. The idea is to encode certain discrete probability distributions related to spanning trees as real stable polynomials. The coefficients of these polynomials give probabilities of certain graph-theoretic events (e.g., the number of edges in a given spanning tree incident on a particular vertex), and analytic properties of real stable polynomials allow one to lower bound these probabilities. This in turn implies bounds on the expected cost of the output of a randomized algorithm for metric TSP.

In this paper, we improve upon the polynomial capacity bounds of \cite{GL21}, and our applications touch on the two problems discussed above. Specifically, we give:
\begin{enumerate}
    \item robust coefficient lower bounds for all (not necessarily homogeneous) real stable polynomials,
    \item simply exponential lower bounds on probabilities of strongly Rayleigh distributions (solving a weak version of an open problem of \cite{KKO21}), and
    \item a further improvement to the approximation factor for metric TSP (predicted by \cite{GL21}).
\end{enumerate}
Interestingly, our approach goes in the opposite direction to that discussed above. Our technical results answer questions regarding real stable polynomials, but to prove these results we use various graph and matrix structures inherent to the polynomials. In \cite{GL21}, this was seen in the ``productization'' technique: bounds on real stable polynomials were achieved by showing that the worst-case bounds come from polynomials associated to certain matrices. In this paper, we push this idea further by showing that these worst-case matrices are bipartite adjacency matrices of forests. This very rigid structure enables a clean induction argument, which implies stronger polynomial capacity bounds. These new bounds lead to the applications discussed above, with the strongest bounds implying the metric TSP improvement.






\section{Main Results} \label{sec:main-results}

We first state here our main technical results; see \cref{sec:tech_overview} for any undefined notation.

Our first main result is a non-homogeneous version of Theorem 2.1 of \cite{GL21} which implies robust coefficient lower bounds for all real stable polynomials as a direct corollary. Crucially, these bounds do not depend on the total degree of the polynomial. This was one of the main barriers to applying the results of \cite{GL21} to metric TSP.

\begin{theorem}[Main non-homogeneous capacity bound] \label{thm:main_nh}
    Let $p \in \R_{\geq 0}[x_1,\ldots,x_n]$ be a real stable polynomial in $n$ variables, and fix any $\bm\kappa \in \Z^n$ with non-negative entries.
    If $p(\bm{1}) = 1$ and $\|\bm\kappa - \nabla p(\bm{1})\|_1 < 1$, then
    \[
        \inf_{x_1,\ldots,x_n > 0} \frac{p(\bm{x})}{x_1^{\kappa_1} \cdots x_n^{\kappa_n}} \geq \left(1 - \|\bm\kappa - \nabla p(\bm{1})\|_1\right)^n.
    \]
    This bound is tight for any fixed $\bm\kappa$ with strictly positive entries.
\end{theorem}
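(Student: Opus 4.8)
The plan is to carry the ``productization'' strategy of \cite{GL21} into the non-homogeneous setting. Write $\eps := \|\bm{\kappa} - \nabla p(\bm{1})\|_1$ and $\cpc_{\bm{\kappa}}(p) := \inf_{\bm{x} > 0} p(\bm{x})/(x_1^{\kappa_1}\cdots x_n^{\kappa_n})$. The first --- and most delicate --- step is a non-homogeneous analogue of \cite[Theorem~2.1]{GL21}: for any real stable $p \in \R_{\ge 0}[x_1,\dots,x_n]$ there is a product $q = \prod_{j=1}^m \ell_j$ of \emph{affine} linear forms $\ell_j(\bm{x}) = A_{j0} + \sum_{i=1}^n A_{ji}x_i$ with $A \ge 0$, having the same value and gradient at $\bm{1}$ and satisfying $\cpc_{\bm{\kappa}}(q) \le \cpc_{\bm{\kappa}}(p)$. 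As in \cite{GL21}, one polarizes $p$ into a multiaffine real stable polynomial and shows that, among multiaffine real stable polynomials with a prescribed value and gradient at $\bm{1}$, the $\bm{\kappa}$-capacity is minimized by one attached to a nonnegative matrix; the new ingredient is to treat the homogenizing variable on an equal footing with the $x_i$, which is exactly what removes the dependence on $\deg p$ present in \cite{GL21}. Normalizing each factor so that $\ell_j(\bm{1}) = 1$ (harmless: $q$ is unchanged, since $q(\bm{1}) = 1$), the datum of $q$ becomes a nonnegative matrix $A$ with rows $[m]$, columns $\{0,1,\dots,n\}$, and all row sums $1$; then $\nabla q(\bm{1}) = (C_1,\dots,C_n)$ with $C_i := \sum_j A_{ji}$, so $\eps = \sum_{i=1}^n |C_i - \kappa_i|$, and it suffices to prove $\cpc_{\bm{\kappa}}(q) \ge (1-\eps)^n$ for every such $A$.

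The second step is to reduce to the case where the bipartite support graph $G(A)$ on $[m] \sqcup \{0,1,\dots,n\}$ is a \emph{forest}. If $G(A)$ contains a cycle, push mass around it with alternating signs: this yields a one-parameter family $A_\theta$ preserving all row and column sums (hence fixing $q_\theta(\bm{1}) = 1$ and $\nabla q_\theta(\bm{1})$) and defined on a closed interval at whose endpoints some entry of $A_\theta$ vanishes. Substituting $x_i = e^{y_i}$, each $\log \ell_j^{(\theta)}(e^{\bm{y}})$ is concave in $\theta$ for fixed $\bm{y}$, and a direct argument then shows $\theta \mapsto \log\cpc_{\bm{\kappa}}(q_\theta)$ is concave, so its minimum over the interval is attained at an endpoint, where $G(A_\theta)$ has strictly fewer edges. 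Iterating gives the ``bipartitioned forest'' normal form.

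For a forest $A$ I would induct lexicographically on $(n,m)$. If $n = 0$ then $q \equiv 1$, $\eps = 0$, and $\cpc_{\bm{\kappa}}(q) = 1 = (1-\eps)^0$; the same covers any instance with $q \equiv 1$, since then $\nabla q(\bm{1}) = \bm{0}$ forces $\bm{\kappa} = \bm{0}$ (integrality plus $\eps < 1$). Otherwise $G(A)$ has an edge, hence at least two leaves. If some \emph{row} $j$ is a leaf, $\ell_j$ is a monomial: either $\ell_j \equiv 1$, which we delete, or $\ell_j = x_i$ with $i \ge 1$, whence $\cpc_{\bm{\kappa}}(q) = \cpc_{\bm{\kappa}-\bm{e}_i}\!\big(\prod_{j'\ne j}\ell_{j'}\big)$ with $\kappa_i \ge 1$ (else $C_i \ge 1$ and $\eps \ge 1$) and $\eps$ unchanged; either way we have reached a strictly smaller instance with $\eps$ preserved and at most $n$ variables, so the bound follows from the inductive hypothesis. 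If no row is a leaf, then some column $x_k$ with $k \ge 1$ is a leaf (column $0$ is at most one of the $\ge 2$ leaves), occurring only in $\ell_j$ with $A_{jk} = C_k \le 1$; since $|C_k - \kappa_k| \le \eps < 1$, necessarily $\kappa_k \in \{0,1\}$. Sending $x_k \to 0$ (if $\kappa_k = 0$) or $x_k \to \infty$ (if $\kappa_k = 1$) and renormalizing the affected row produces a forest product $q''$ in the $n-1$ variables $\{x_i : i \ne k\}$, with the identity
\[
  \cpc_{\bm{\kappa}}(q) = c \cdot \cpc_{\bm{\kappa}''}(q''),
\]
where $\bm{\kappa}''$ is $\bm{\kappa}$ with the $k$-th coordinate dropped, the scalar $c \ge 1-\eps$ (this uses $|C_k - \kappa_k| \le \eps$), and a short computation using $\sum_i A_{ji} = 1$ gives $\eps'' := \sum_{i \ne k}|C_i'' - \kappa_i| \le \eps$. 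By induction $\cpc_{\bm{\kappa}''}(q'') \ge (1-\eps'')^{n-1} \ge (1-\eps)^{n-1}$, so $\cpc_{\bm{\kappa}}(q) \ge (1-\eps)(1-\eps)^{n-1} = (1-\eps)^n$.

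The main obstacle is the non-homogeneous productization of the first step: one must run the \cite{GL21} polarization argument with the constant terms of the affine forms treated as honest coordinates, so that the bound ends up depending on $n$ but never on $\deg p$; the forest reduction and the induction are then essentially bookkeeping. For tightness, fix $\bm{\kappa} \in \Z_{>0}^n$ and $t > 0$ and take
\[
  p(\bm{x}) = \frac{1}{(1+t)^n}\Big(\prod_{i=1}^n x_i^{\kappa_i - 1}\Big)\Big(\prod_{i=1}^{n-1}(x_i + t x_{i+1})\Big)(x_n + t),
\]
a product of monomials and nonnegative affine forms (hence real stable) with $p(\bm{1}) = 1$. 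One computes $\nabla p(\bm{1}) = \bm{\kappa} - \tfrac{t}{1+t}\bm{e}_1$, so $\eps = \tfrac{t}{1+t} \in (0,1)$, and, writing $u_i = x_{i+1}/x_i$, $p(\bm{x})/(x_1^{\kappa_1}\cdots x_n^{\kappa_n}) = (1+t)^{-n}(1 + t/x_n)\prod_{i=1}^{n-1}(1 + t u_i)$, whose infimum over $\bm{x} > 0$ equals $(1+t)^{-n} = (1-\eps)^n$. The monomial factor (hence strict positivity of $\bm{\kappa}$) is used only to realize arbitrary $\bm{\kappa}$, and the support graph above is a path, consistent with the forest characterization.
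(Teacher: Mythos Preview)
Your proposal is correct and follows the same four-step strategy as the paper: non-homogeneous productization, log-concave reduction to bipartitioned forests, and induction on leaves. Two remarks are worth making.

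First, the productization step is not proved by polarizing $p$ into a multiaffine polynomial and then minimizing capacity over that class; multiaffine real stable polynomials are not products of linear forms in general, so that formulation does not obviously work. What the paper (following \cite{GL21}) actually does is homogenize $p$ to degree $d$ in $n+1$ variables, apply the \emph{pointwise} productization theorem of \cite{GL21} at each fixed evaluation point $(\bm{u},1)$, and dehomogenize. This yields, for every $\bm{u}>0$, a product $f_{\bm{u}}$ of affine linear forms with the same marginals and $p(\bm{u}) = f_{\bm{u}}(\bm{u})$, from which $\cpc_{\bm\kappa}(p) \ge \min_f \cpc_{\bm\kappa}(f)$ follows by taking the infimum over $\bm{u}$. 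Your diagnosis that the constant term must be treated as an honest coordinate is exactly right; only the mechanism you describe is off.

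Second, your forest induction is organized a bit differently from the paper's but is equally valid for the $\ell^1$ bound. The paper first strips off any variable with $\kappa_j = 0$ via a lemma that does \emph{not} require column $j$ to be a leaf, and only afterwards invokes the forest structure (once all $\kappa_j \ge 1$) to locate either a row leaf or a column leaf with $\kappa_k = 1$. You instead always pick a leaf first and branch on whether it is a row or a column, and in the column case on whether $\kappa_k = 0$ or $1$. Both schemes work; yours is arguably cleaner for proving the $\ell^1$ statement directly, while the paper's is set up to deliver the sharper two-sided bound (separate $\bm\epsilon,\bm\delta$ parameters over all subset sizes) needed downstream for the TSP application. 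Your crucial claim $\eps'' \le \eps$ in both column-leaf cases is correct, via precisely the triangle-inequality-plus-row-sum computation you allude to: in the $\kappa_k=0$ case one gains $C_k$ from the dropped coordinate and loses at most $C_k$ from renormalizing row $j_0$; in the $\kappa_k=1$ case one gains $1-C_k$ and loses at most $\sum_{i\ne k, i\ge 1} A_{j_0,i} \le 1-C_k$. The tightness example agrees with the paper's up to the reparametrization $t/(1+t) \leftrightarrow 1-\epsilon$.
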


\begin{corollary}[Main non-homogeneous coefficient bound] \label{cor:main_nh_coeff}
    Let $p \in \R_{\geq 0}[x_1,\ldots,x_n]$ be a real stable polynomial in $n$ variables, and fix any $\bm\kappa \in \Z^n$ with non-negative entries.
    If $p(\bm{1}) = 1$ and $\|\bm\kappa - \nabla p(\bm{1})\|_1 < 1$, then
    \[
        p_{\bm\kappa} \geq \left(\prod_{i=1}^n \frac{\kappa_i^{\kappa_i} e^{-\kappa_i}}{\kappa_i!}\right) (1 - \|\bm\kappa - \nabla p(\bm{1})\|_1)^n,
    \]
    where $p_{\bm\kappa}$ is the coefficient of $\bm{x}^{\bm\kappa}$ in $p$. The dependence on $(1 - \|\bm\kappa - \nabla p(\bm{1})\|_1)$ is tight for any fixed $\bm\kappa$ with strictly positive entries.
\end{corollary}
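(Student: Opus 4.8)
The plan is to deduce \cref{cor:main_nh_coeff} from \cref{thm:main_nh} by the standard comparison between a coefficient of a real stable polynomial and its capacity. Write $\cpc_{\bm\kappa}(p):=\inf_{x_1,\dots,x_n>0} p(\bm x)/(x_1^{\kappa_1}\cdots x_n^{\kappa_n})$ for the quantity lower-bounded in \cref{thm:main_nh}. Since $p$ has non-negative coefficients, $p(\bm x)/\bm x^{\bm\kappa}=p_{\bm\kappa}+\sum_{\bm\alpha\neq\bm\kappa}p_{\bm\alpha}\,\bm x^{\bm\alpha-\bm\kappa}\geq p_{\bm\kappa}$ for $\bm x>0$, so always $p_{\bm\kappa}\leq\cpc_{\bm\kappa}(p)$. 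The content is the reverse, degree-free inequality
\[
    p_{\bm\kappa}\;\geq\;\Bigl(\prod_{i=1}^{n}\tfrac{\kappa_i^{\kappa_i}e^{-\kappa_i}}{\kappa_i!}\Bigr)\cpc_{\bm\kappa}(p)\qquad\text{for every real stable }p\in\R_{\geq0}[x_1,\dots,x_n]\text{ and }\bm\kappa\in\Z_{\geq0}^n,
\]
which is independent of the total degree; granting it, \cref{cor:main_nh_coeff} follows at once by substituting $\cpc_{\bm\kappa}(p)\geq(1-\|\bm\kappa-\nabla p(\bm1)\|_1)^n$ from \cref{thm:main_nh}.

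The engine is the classical univariate estimate: if $q(x)=\sum_{j=0}^{d}a_jx^j\in\R_{\geq0}[x]$ is real-rooted with $d=\deg q\geq k$, then $a_k\geq\gamma(d,k)\,\cpc_k(q)$, where $\gamma(d,k):=\binom dk k^k(d-k)^{d-k}/d^d$ and equality holds for $q\propto(1+cx)^d$; this is a routine consequence of Newton's inequalities together with a compactness/variational argument and is standard in the theory of stable polynomials. The relevant feature is that $\gamma(\cdot,k)$ is non-increasing on $\{d\geq k\}$ with limit $k^ke^{-k}/k!$: a one-line computation gives $\gamma(d+1,k)/\gamma(d,k)=f(d-k)/f(d)$ where $f(m):=(1+1/m)^m$ is increasing, so the ratio is $\leq1$, and the limit is read off from $\gamma(d,k)=\tfrac{k^k}{k!}\bigl(\prod_{j=0}^{k-1}(1-\tfrac jd)\bigr)(1-\tfrac kd)^{d-k}$. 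Hence $a_k\geq(k^ke^{-k}/k!)\cpc_k(q)$ with no dependence on $\deg q$ (and when $k>\deg q$ both sides vanish, since then $\cpc_k(q)=0$).

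To pass to the multivariate claim I would induct on $n$. Write $p=\sum_{j}p_j(x_1,\dots,x_{n-1})x_n^j$; for each fixed $(x_1,\dots,x_{n-1})\in\R_{>0}^{n-1}$ the restriction $x_n\mapsto p(\bm x)$ is real-rooted with non-negative coefficients, so the univariate bound gives $p_{\kappa_n}(x_1,\dots,x_{n-1})\geq(\kappa_n^{\kappa_n}e^{-\kappa_n}/\kappa_n!)\inf_{x_n>0}p(\bm x)/x_n^{\kappa_n}$. Dividing by $x_1^{\kappa_1}\cdots x_{n-1}^{\kappa_{n-1}}$, taking the infimum over $(x_1,\dots,x_{n-1})$, and using that an iterated infimum is a joint infimum yields $\cpc_{(\kappa_1,\dots,\kappa_{n-1})}(p_{\kappa_n})\geq(\kappa_n^{\kappa_n}e^{-\kappa_n}/\kappa_n!)\cpc_{\bm\kappa}(p)$. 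Since $p_{\kappa_n}=\tfrac1{\kappa_n!}(\partial_{x_n}^{\kappa_n}p)|_{x_n=0}$ is again real stable with non-negative coefficients (one-variable differentiation and restriction of a variable to a real value both preserve real stability) and $(p_{\kappa_n})_{(\kappa_1,\dots,\kappa_{n-1})}=p_{\bm\kappa}$, applying the induction hypothesis in $n-1$ variables and chaining the two inequalities closes the argument. I do not expect a serious obstacle here: the corollary is essentially formal given \cref{thm:main_nh}, and the only care required is (i) invoking the univariate capacity--coefficient estimate and the monotonicity of $\gamma$, and (ii) checking that coefficient extraction keeps the polynomial real stable with non-negative coefficients and interacts correctly with the infimum defining the capacity.

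For tightness of the dependence on $1-\|\bm\kappa-\nabla p(\bm1)\|_1$: fix $\bm\kappa$ with strictly positive entries, and for each admissible $\epsilon$ take a polynomial $p$ witnessing tightness in \cref{thm:main_nh}, so $\cpc_{\bm\kappa}(p)=(1-\epsilon)^n$ (or approaches it), with $\epsilon=\|\bm\kappa-\nabla p(\bm1)\|_1$. Then $p_{\bm\kappa}$ is squeezed between $\bigl(\prod_i\kappa_i^{\kappa_i}e^{-\kappa_i}/\kappa_i!\bigr)(1-\epsilon)^n$ and $\cpc_{\bm\kappa}(p)=(1-\epsilon)^n$, quantities that differ only by the fixed, $\epsilon$-independent constant $\prod_i\kappa_i^{\kappa_i}e^{-\kappa_i}/\kappa_i!$; letting $\epsilon\to1^-$ shows that the factor $(1-\|\bm\kappa-\nabla p(\bm1)\|_1)^n$ is optimal up to that $\bm\kappa$-dependent constant (no smaller exponent, and no strictly larger base, can hold). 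The constant itself is not claimed optimal: it is attained only in the exponential limit $(1+cx/m)^m\to e^{cx}$, a different extremizer, which is exactly why \cref{cor:main_nh_coeff} asserts tightness only for the $1-\|\bm\kappa-\nabla p(\bm1)\|_1$ dependence.
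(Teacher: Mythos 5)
Your argument is correct and matches the paper's: both deduce the corollary by chaining the degree-free capacity-to-coefficient inequality $p_{\bm\kappa}\geq\bigl(\prod_{i=1}^n\kappa_i^{\kappa_i}e^{-\kappa_i}/\kappa_i!\bigr)\cpc_{\bm\kappa}(p)$ with \cref{thm:main_nh}, and both read off tightness from the extremal example of \cref{lem:tightness}. The only difference is that the paper cites this first inequality as Corollary 3.6 of \cite{Gur09}, whereas you re-derive it from the univariate real-rooted estimate $a_k\geq\gamma(d,k)\cpc_k(q)$ together with the monotonicity of $\gamma(\cdot,k)$ and a one-variable-at-a-time induction --- a correct and self-contained elaboration, not a genuinely different route.
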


The above results\footnote{Note that these bounds already appear in the arXiv version of \cite{GL21}, but not in the STOC version.} are robust (i.e., resilient to $\ell^1$ perturbations) versions of the results utilized to bound various combinatorial and probabilistic quantities, as discussed above. That said, they are still not quite strong enough to imply an improvement to the metric TSP approximation factor. To obtain this improvement, we resolve a weak version of an open problem from \cite{KKO21}, which we discuss below. Stronger versions of \cref{thm:main_nh} and \cref{cor:main_nh_coeff} which imply this result can be found in \cref{sec:main_proofs}.




\subsection{Application: Minimum Permanent}

Before discussing the application to TSP, we first describe a different application of our bounds as a sort of prelude. It is at this point well-known that the permanent of any $n \times n$ doubly stochastic matrix is at least $\frac{n!}{n^n}$, and that $\frac{1}{n} \bm{1} \cdot \bm{1}^\top$ is the unique minimizer of the permanent over all doubly stochastic matrices. On the other hand, a similar tight lower bound with explicit minimizer is not known for sets of matrices with different row and column sums. The following then slightly extends what is known in the doubly stochastic case. See \cref{sec:unique-minimizers} for further details.

Given $\bm{c} \in \R_{\geq 0}^n$, let $\Mat_n(\bm{c})$ denote the set of $n \times n$ matrices with non-negative entries, row sums equal to $\bm{1}$, and columns sums equal to $\bm{c}$.

\begin{theorem} \label{thm:unique-minimizers-main}
    For all $n \geq 1$, there exists $\epsilon_n > 0$ such that if $\|\bm{c} - \bm{1}\|_1 < \epsilon_n$ then $\frac{1}{n} \bm{1} \cdot \bm{c}^\top$ is the unique minimizer of the permanent over $\Mat_n(\bm{c})$. Specifically, this holds if $\frac{c_1 c_2 \cdots c_n}{L(\bm{c})} < \frac{(n-2)^{n-2} n^{n-1}}{(n-1)^{2n-3}}$, where $L(\bm{c})$ is any lower bound on the capacity.
\end{theorem}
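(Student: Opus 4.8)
The plan is to move to polynomials. For $A\in\Mat_n(\bm c)$ set $p_A(\bm x)=\prod_{i=1}^n\big(\sum_j a_{ij}x_j\big)$, a real stable polynomial with non-negative coefficients satisfying $p_A(\bm 1)=1$ (row sums), $\nabla p_A(\bm 1)=\bm c$ (column sums), and $\per(A)=[x_1\cdots x_n]\,p_A$. First I would record two facts about the claimed minimizer $A^*=\tfrac1n\bm 1\bm c^\top$: since $p_{A^*}(\bm x)=n^{-n}(\bm c\cdot\bm x)^n$, weighted AM--GM gives $\cpc_{\bm 1}(p_{A^*}):=\inf_{\bm x>0}\frac{p_{A^*}(\bm x)}{x_1\cdots x_n}=c_1\cdots c_n$, and expanding the coefficient gives $\per(A^*)=\frac{n!}{n^n}c_1\cdots c_n$; in particular $A^*$ is an equality case of Gurvits's bound $\per(A)\ge\frac{n!}{n^n}\cpc_{\bm 1}(p_A)$ \cite{Gur07VdW}. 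Since $\Mat_n(\bm c)$ is a nonempty compact polytope and $\per$ is continuous, a minimizer $A_0$ exists with $\per(A_0)\le\per(A^*)$, and it suffices to show $A_0=A^*$.

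The core is a dichotomy on whether $A_0$ has a zero entry. Suppose $A_0$ has a zero in column $k$; then $\deg_{x_k}p_{A_0}\le n-1$, so running Gurvits's inductive capacity estimate with the variable $x_k$ eliminated first replaces the leading loss factor $\gamma_n$ by $\gamma_{n-1}$, where $\gamma_d:=\big(\tfrac{d-1}{d}\big)^{d-1}$; after that elimination each successive polynomial is again real stable and is a sum of products of at most $n-1,n-2,\dots,1$ linear forms, so the remaining loss factors are $\ge\gamma_{n-1},\gamma_{n-2},\dots,\gamma_1$, exactly as in the generic run. Writing $G(m):=\prod_{k=2}^m\gamma_k=\frac{m!}{m^m}$ and letting $L(\bm c)$ be any lower bound on $\cpc_{\bm 1}(p_A)$ valid for all $A\in\Mat_n(\bm c)$ — e.g. $L(\bm c)=(1-\|\bm c-\bm 1\|_1)^n$ from \cref{thm:main_nh}, or the sharper bound of \cref{sec:main_proofs} — this gives
\[
\per(A_0)\ \ge\ \gamma_{n-1}\,G(n-1)\,\cpc_{\bm 1}(p_{A_0})\ \ge\ \gamma_{n-1}\,G(n-1)\,L(\bm c).
\]
Since $G(n)=\gamma_n\,G(n-1)$, the hypothesis $\dfrac{c_1\cdots c_n}{L(\bm c)}<\dfrac{\gamma_{n-1}}{\gamma_n}=\dfrac{(n-2)^{n-2}n^{n-1}}{(n-1)^{2n-3}}$ is precisely the assertion $\gamma_{n-1}G(n-1)L(\bm c)>\frac{n!}{n^n}c_1\cdots c_n=\per(A^*)$, hence $\per(A_0)>\per(A^*)$, contradicting minimality. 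So $A_0$ has all entries positive.

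For a positive minimizer $A_0$, first-order optimality along every marginal-preserving $2\times 2$ cycle perturbation forces $\per(A_0^{ij})=u_i+v_j$ for vectors $u,v$ (with $A_0^{ij}$ the $(i,j)$ minor): this is the stationarity system analyzed classically for doubly stochastic matrices, where London's theorem singles out $\tfrac1n\bm 1\bm 1^\top$. I would close by a perturbation argument off that statement: for $\bm c$ sufficiently close to $\bm 1$ the only positive matrix of $\Mat_n(\bm c)$ satisfying the system is $A^*=\tfrac1n\bm 1\bm c^\top$ (which is stationary since its minors are again rank one). The needed closeness is forced by the hypothesis, since $\sum_j c_j=n$ gives $c_1\cdots c_n\le 1$ by AM--GM while $\beta_n:=\frac{(n-2)^{n-2}n^{n-1}}{(n-1)^{2n-3}}=1+O(1/n)$, so $\frac{c_1\cdots c_n}{L(\bm c)}<\beta_n$ makes $\|\bm c-\bm 1\|_1$ small; and the mere existence of $\epsilon_n$ follows because $\beta_n>1$ (log-concavity of $m\mapsto m!/m^m$) together with $L(\bm 1)=1$ makes the condition, hence the conclusion, hold on a neighborhood of $\bm c=\bm 1$.

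I expect the genuine obstacle to be the positive-interior case: upgrading London's classical uniqueness to a quantitative statement that is uniform over the whole polytope $\Mat_n(\bm c)$ (not merely local near $A^*$) is where the real work lies, whereas the zero-entry case is essentially automatic — the degree drop feeds directly into Gurvits's induction and by itself pins down the exact threshold $\frac{(n-2)^{n-2}n^{n-1}}{(n-1)^{2n-3}}$.
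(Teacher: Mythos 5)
Your zero-entry argument is essentially the paper's Lemma~\ref{lem:strictly-pos}, down to the detail: a zero in column $k$ gives $\deg_{x_k}p_{A_0}\le n-1$, eliminating $x_k$ first replaces the factor $\gamma_n$ by $\gamma_{n-1}$ in the Gurvits chain, and the threshold $\gamma_{n-1}/\gamma_n=\frac{(n-2)^{n-2}n^{n-1}}{(n-1)^{2n-3}}$ falls out exactly. That part is correct.

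The gap is in the positive-interior case, and you partially flagged it yourself. You correctly extract the stationarity condition $\per(A_0^{ij})=u_i+v_j$ from first-order optimality over $2\times 2$ cycle perturbations, but you then appeal to London's theorem (a statement about doubly stochastic matrices) plus an unspecified ``perturbation argument'' to push uniqueness over to $\Mat_n(\bm c)$. As written, that step is not a proof: London's theorem does not apply to $\Mat_n(\bm c)$ for $\bm c\ne\bm 1$, and the quantitative, polytope-wide extension you are gesturing at is exactly the part you call ``the real work'' and do not supply. The paper's Theorem~\ref{thm:unique-min} sidesteps London entirely. Its observation is that stationarity, combined with row symmetry and row multilinearity of the permanent, forces the gradient of $\per$ to be orthogonal to the polytope at \emph{every} point of the segment joining $(\bm m_1,\bm m_2)$ to its row swap $(\bm m_2,\bm m_1)$ (holding the other rows fixed), because along that segment the stationarity vector $(a_i+b_j)$ is merely convexly averaged with $(a_i'+b_j)$. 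Hence $\per$ is constant (and minimal) along the entire line in $\Mat_n(\bm c)$ through $A_0$ in that direction; if $\bm m_1\ne\bm m_2$ this line meets the boundary of the bounded polytope at a matrix with a zero entry, contradicting Lemma~\ref{lem:strictly-pos}. Therefore all rows of $A_0$ coincide and $A_0=\tfrac1n\bm 1\bm c^\top$. This symmetry-plus-multilinearity slide is the missing idea in your proposal; it replaces your perturbation-off-London step with a clean global argument and is what actually closes the proof.

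Separately, your remark that the hypothesis forces $\|\bm c-\bm 1\|_1$ to be small is not part of what needs to be shown (the theorem is stated with the explicit capacity-ratio condition), and the reasoning you give for it (that $\beta_n=1+O(1/n)$ while $c_1\cdots c_n\le 1$) does not by itself control $\|\bm c-\bm 1\|_1$ without knowing how sharply $L(\bm c)$ tracks $c_1\cdots c_n$; it is harmless but not load-bearing.
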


The proof utilizes \cref{thm:main_nh} or results of \cite{GL21} to guarantee all minimizers of the permanent lie in the relative interior of $\Mat_n(\bm{c})$. The symmetry and multilinearity of the permanent then imply the minimizer must be the rank-one matrix of $\Mat_n(\bm{c})$ described above. See Section \cref{sec:unique-minimizers} for the explicit value of $\epsilon_n$ and the details of the proof. It should be noted that in proving uniqueness, we were able to avoid usage of the conditions for equality in the Alexandrov-Fenchel inequalities.

On the other hand, when $\bm{c}$ is far from $\bm{1}$, the above result can be far from correct. Recall from \cite{GL21} that $\per(M) > 0$ for all $M \in \Mat_n(\bm{c})$ if and only if $\|\bm{c} - \bm{1}\|_1 < 2$.

\begin{proposition} \label{prop:minimizer-counter}
    For all $n$ large enough, there exists $\bm{c}$ such that $\|\bm{c} - \bm{1}\|_1 < 2$ and a sparse matrix $M \in \Mat_n(\bm{c})$ (with linearly many non-zero entries) which has smaller permanent than that of $\frac{1}{n} \bm{1} \cdot \bm{c}^\top$.
\end{proposition}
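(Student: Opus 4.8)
The plan is to exhibit, for each large $n$, a single explicit pair $(\bm{c},M)$ witnessing the claim, so that the whole argument collapses to two one-line permanent evaluations and a Stirling comparison. Fix once and for all a constant $a\in(1-e^{-1},1)$, for concreteness $a=\tfrac34$, and let $\bm{c}=(c_1,\dots,c_n)$ be given by $c_1=1+a$, $c_n=1-a$, and $c_j=1$ for $1<j<n$. Then $\sum_j c_j=n$ and $\|\bm{c}-\bm{1}\|_1=2a<2$ (so, by the equivalence recalled just above, every member of $\Mat_n(\bm{c})$ --- in particular $\tfrac1n\bm{1}\cdot\bm{c}^\top$ and the matrix $M$ below --- has strictly positive permanent). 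Since the $(i,j)$ entry of $\tfrac1n\bm{1}\cdot\bm{c}^\top$ is $c_j/n$, expanding the permanent gives
\[
    \per\!\left(\tfrac1n\bm{1}\cdot\bm{c}^\top\right)=\frac{1}{n^n}\sum_{\sigma\in S_n}\prod_{j=1}^n c_j=\frac{n!}{n^n}\prod_{j=1}^n c_j=\frac{n!}{n^n}\,(1-a^2),
\]
and by the Stirling lower bound $n!\ge\sqrt{2\pi n}\,(n/e)^n$ this is at least $(1-a^2)\sqrt{2\pi n}\,e^{-n}$.

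Next I would write down the sparse competitor $M$: the lower bidiagonal matrix with $M_{1,1}=1$, with $M_{i,i-1}=a$ and $M_{i,i}=1-a$ for $2\le i\le n$, and every other entry equal to $0$. It has $2n-1$ nonzero entries, all lying in $[0,1]$; its row sums are all $1$ and its column sums are exactly $\bm{c}$, so $M\in\Mat_n(\bm{c})$. The structural point that makes $\per(M)$ computable is that the bipartite support graph of $M$ --- with row vertices $r_1,\dots,r_n$ and column vertices $c_1,\dots,c_n$ --- is precisely the path $r_1-c_1-r_2-c_2-\cdots-r_n-c_n$. A tree has at most one perfect matching (peel a leaf and induct), and here that matching is $\{\,r_ic_i:1\le i\le n\,\}$, i.e.\ the identity permutation; hence every $\sigma\neq\mathrm{id}$ has $\prod_i M_{i,\sigma(i)}=0$, and
\[
    \per(M)=M_{1,1}\prod_{i=2}^n M_{i,i}=(1-a)^{n-1}.
\]

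It remains to compare: we want $(1-a)^{n-1}<(1-a^2)\sqrt{2\pi n}\,e^{-n}$ for all large $n$. Writing the left side as $(1-a)^{-1}\,e^{-n\ln(1/(1-a))}$ and using $1-a<e^{-1}$ (equivalently $\ln\tfrac{1}{1-a}>1$), the left side decays like $e^{-(1+c)n}$ with $c=\ln\tfrac1{1-a}-1>0$, whereas the right side is $e^{-n+\Theta(\log n)}$; hence the inequality holds once $n$ is large enough, and for $a=\tfrac34$ a direct check shows it in fact holds for every $n\ge 3$. This gives $\per(M)<\per(\tfrac1n\bm{1}\cdot\bm{c}^\top)$, completing the proof. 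I do not expect any genuine obstacle: the only step requiring a word of justification is the ``tree has a unique perfect matching'' fact, which pins $\per(M)$ down exactly; everything else is arithmetic. (If one prefers the nonzero entries of $M$ to avoid the value $1$, one can instead split the excess $a$ of $c_1$ across a bounded number of columns and run the identical argument.)
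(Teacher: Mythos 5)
Your proof is correct and takes essentially the same approach as the paper: the paper's witness (Example~5.5) is also a bidiagonal matrix whose bipartite support graph is a path, so that the permanent reduces to the single diagonal product, compared against the rank-one matrix via Stirling. The only difference is parameterization. You fix a constant $a\in(1-e^{-1},1)$ so the diagonal entries are a constant $1-a$ and $\|\bm{c}-\bm{1}\|_1=2a$ is bounded away from $2$; the paper instead uses $\epsilon=n^{-(1+t)}$, so its column sums tend to $(2,1,\dots,1,0)$ and $\|\bm{c}-\bm{1}\|_1\to 2$, and its diagonal entries $k\epsilon$ vary with $k$. Your version is cleaner and more transparent (the Stirling comparison is a one-liner and the ``tree has a unique perfect matching'' observation replaces the paper's triangularity argument, but those are the same fact in this setting); the paper's tunable $t$ buys the additional observation that the counterexample persists with $\bm{c}$ arbitrarily close to the boundary $\|\bm{c}-\bm{1}\|_1=2$, which the statement does not actually require.
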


These two results suggest the complexity of the minimizer of the permanent, given the column sums $\bm{c}$. The coefficient bound given above in \cref{cor:main_nh_coeff} is then a lower bound which generalizes that of the permanent to coefficients of real stable polynomials. (Consider the coefficient $p_{\bm{1}}$ of $p(\bm{x}) = \prod_{i=1}^n \sum_{j=1}^n m_{ij} x_j$.) Thus \cref{cor:main_nh_coeff} can be seen as a sort of ``smoothing'' of the complexities that can occur for minima of the permanent and its generalizations.

Further, \cref{thm:unique-minimizers-main} can generalized to a permanent-like function on rectangular matrices using \cref{thm:main_nh}, and even beyond that to the mixed discriminant. (For the mixed discriminant, the row sum condition becomes a trace condition, and the column sum condition becomes an eigenvalue condition.) On the other hand, we can generalize the statement of \cref{thm:unique-minimizers-main} to coefficients of real stable polynomials in general, but we do not yet know how to prove it.

\subsection{Application: Metric TSP} \label{sec:main_app_TSP}

We first recall an important probabilistic bound from \cite{KKO21} used in the analysis of their metric TSP approximation algorithm (which is a slight modification of the max entropy algorithm from \cite{OGSS11} first studied by \cite{AGMOGS10}). In what follows, we let $A_S := \sum_{i \in S} A_i$ and $\kappa_S := \sum_{i \in S} \kappa_i$. See \cref{sec:tech_overview} for any undefined notation.

\begin{theorem}[Prop. 5.1 of \cite{KKO21}] \label{thm:doubly_exp}
    Let $\mu$ be a strongly Rayleigh distribution on $[m]$, let $A_1,\ldots,A_n$ be random variables counting the number of elements contained in disjoint subsets of $[m]$, and fix $\bm\kappa \in \Z^n$ with non-negative entries. Suppose for all $S \subseteq [n]$ we have
    \[
        \PP{\mu}{A_S \geq \kappa_S} \geq \epsilon \qquad \text{and} \qquad \PP{\mu}{A_S \leq \kappa_S} \geq \epsilon.
    \]
    Then we have
    \[
        \PP{\mu}{A_1 = \kappa_1, \ldots, A_n = \kappa_n} \geq f(\epsilon) \cdot \PP{\mu}{A_{[n]} = \kappa_{[n]}},
    \]
    where $f(\epsilon) \geq \epsilon^{2^n} \prod_{i=2}^n \frac{1}{\max\{\kappa_i, \kappa_{[i-1]}\} + 1}$.
\end{theorem}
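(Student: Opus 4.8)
The plan is to induct on $n$, the number of random variables. For the base case $n=1$ the statement is trivial since $f(\epsilon) = 1$ when the product is empty (or one uses $\PP{\mu}{A_1 = \kappa_1} \geq \PP{\mu}{A_{[1]} = \kappa_{[1]}}$ directly). For the inductive step, the key idea is to condition on the value of $A_n$. Write $\PP{\mu}{A_1 = \kappa_1, \ldots, A_n = \kappa_n}$ as $\PP{\mu}{A_n = \kappa_n} \cdot \PP{\mu}{A_1 = \kappa_1, \ldots, A_{n-1} = \kappa_{n-1} \mid A_n = \kappa_n}$. The conditional distribution of $\mu$ given $A_n = \kappa_n$ is again strongly Rayleigh — this is the crucial closure property, since strongly Rayleigh distributions are closed under conditioning on the number of elements in a fixed subset (this follows from closure under external fields and projection). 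So the inductive hypothesis applies to the variables $A_1, \ldots, A_{n-1}$ under the conditioned measure, provided the two-sided hypothesis $\PP{}{A_S \geq \kappa_S}, \PP{}{A_S \leq \kappa_S} \geq \epsilon$ survives conditioning for every $S \subseteq [n-1]$.

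The main obstacle, as I see it, is precisely controlling how the lower bound $\epsilon$ degrades under conditioning, and extracting the factor $\frac{1}{\max\{\kappa_n, \kappa_{[n-1]}\}+1}$ cleanly. For the conditional hypotheses: given $A_n = \kappa_n$, one has $A_{[n-1]} = A_{[n]} - \kappa_n$, so for $S \subseteq [n-1]$ the events $\{A_S \geq \kappa_S\}$ and $\{A_S \leq \kappa_S\}$ need to retain probability at least $\epsilon$ under the conditioned measure. I expect one shows $\PP{\mu}{A_S \geq \kappa_S \mid A_n = \kappa_n} \geq \PP{\mu}{A_S \geq \kappa_S \text{ and } A_n = \kappa_n} \geq$ (something like $\epsilon^2$), using that $\PP{\mu}{A_n = \kappa_n}$ can be bounded below — here one uses that $\kappa_n$ lies between the "$\geq$" and "$\leq$" thresholds and a unimodality/log-concavity property of the distribution of $A_n$ (the distribution of a sum counting elements in a subset under a strongly Rayleigh measure has a log-concave, hence unimodal, probability sequence), giving $\PP{\mu}{A_n = \kappa_n} \geq \frac{\epsilon}{\kappa_n + 1}$ or $\frac{\epsilon}{\kappa_{[n-1]}+1}$ depending on which side the mode falls, whence the $\max$. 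This is where the factor $\frac{1}{\max\{\kappa_n,\kappa_{[n-1]}\}+1}$ enters, and where the squaring of $\epsilon$ at each step produces the $\epsilon^{2^n}$ after $n$ iterations.

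Concretely, the induction would be organized as follows. First, establish the closure of strongly Rayleigh under conditioning on $A_n = \kappa_n$ (cite the relevant structural result). Second, prove the unimodality/log-concavity lemma for $\PP{\mu}{A_n = k}$ as a function of $k$, and deduce $\PP{\mu}{A_n = \kappa_n} \geq \frac{\epsilon}{\max\{\kappa_n, \kappa_{[n-1]}\} + 1} \cdot (\text{const})$ from the two-sided bounds at $S = \{n\}$ and $S = [n-1]$ together with $\sum_k \PP{\mu}{A_n = k} = 1$ restricted to the relevant range. Third, verify that the conditioned measure satisfies the hypotheses of the theorem for $n-1$ with parameter $\epsilon' \geq \epsilon^2$ (or $\epsilon$ times the mode bound). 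Fourth, apply the inductive hypothesis and multiply the resulting estimates, checking that $\epsilon'^{2^{n-1}} \leq \epsilon^{2^n}$ and that the product of reciprocal factors telescopes to $\prod_{i=2}^n \frac{1}{\max\{\kappa_i,\kappa_{[i-1]}\}+1}$. The delicate bookkeeping is matching the exponent $2^n$ and the exact form of the $\max$ in each factor; everything else is structural.
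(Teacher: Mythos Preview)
The paper does not contain a proof of this statement: \cref{thm:doubly_exp} is quoted verbatim as Proposition~5.1 of \cite{KKO21} and serves only as the doubly-exponential baseline that the paper's own \cref{thm:main_imp_prob_bound} improves upon. There is therefore nothing in the present paper to compare your attempt against.

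That said, your outline is broadly consistent with the machinery from \cite{KKO21} that \emph{is} reproduced in the appendix here. In particular, \cref{lem:updowntruncation} (Lemma~5.4 of \cite{KKO21}) is exactly the ``conditional two-sided bound'' you are reaching for: it gives $\PP{}{A \geq n_A \mid A+B = n} \geq \PP{}{A \geq n_A}\,\PP{}{B \leq n_B}$, which is how the $\epsilon \mapsto \epsilon^2$ degradation arises at each inductive step, and \cref{lem:SRA=nA} (Corollary~5.5 of \cite{KKO21}) packages the log-concavity/unimodality argument that extracts the $\frac{1}{\max\{\kappa_i,\kappa_{[i-1]}\}+1}$ factor. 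So the skeleton you describe---induct on $n$, condition on one coordinate, use closure of strongly Rayleigh under such conditioning, and control the degradation of $\epsilon$ via a two-variable correlation inequality---is the right one, and matches what \cite{KKO21} actually does. The step you flag as ``delicate bookkeeping'' is genuinely the content; everything else is structural, as you say.
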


In \cite{KKO21}, the authors note two things about this bound. First, they note that to apply the bound it is sufficient to have 
\begin{equation}\label{eq:expectation}
    \big|\EE{\mu}{A_S} - \kappa_S\big| < 1 \qquad \forall S \subseteq [n],
\end{equation}
since this implies a lower bound on $\PP{\mu}{A_S = \kappa_S}$ for all $S \subseteq [n]$ for strongly Rayleigh distributions. Second, they note that the bound on $f(\epsilon)$ is doubly exponential in $n$, but they expect the true dependency to only be simply exponential. They leave it as an open problem to determine a tight lower bound on $f(\epsilon)$.

In this paper, we further improve the metric TSP approximation factor by resolving a weak version of this open problem: we give a simply exponential lower bound which depends tightly on $\epsilon$, under the stronger condition of \eqref{eq:expectation}. Concretely, we prove the following. 

\begin{theorem}[Improved probability lower bound] \label{thm:main_imp_prob_bound}
    Let $\mu$ be a strongly Rayleigh distribution on $[m]$, let $A_1,\ldots,A_n$ be random variables counting the number of elements contained in disjoint subsets of $[m]$, and fix $\bm\kappa \in \Z^n$ with non-negative entries. Suppose for all $S \subseteq [n]$ we have
    \[
        \big|\EE{\mu}{A_S} - \kappa_S\big| \leq 1-\epsilon.
    \]
    Then we have
    \[
        \PP{\mu}{A_1 = \kappa_1, \dots, A_n = \kappa_n} \geq \epsilon^n \prod_{\kappa_i > 0} \frac{1}{e\sqrt{\kappa_i}}.
    \]
    The dependence on $\epsilon$ is tight for any fixed $\bm\kappa$ with strictly positive entries.
\end{theorem}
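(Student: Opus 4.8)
The plan is to pass from the strongly Rayleigh distribution to a real stable polynomial, read off the desired joint probability as a single coefficient, and then apply a strengthening of \cref{thm:main_nh} together with Stirling's formula. Let $g_\mu(\bm y) = \sum_{T \subseteq [m]} \mu(T) \prod_{i \in T} y_i$ be the generating polynomial of $\mu$, which is real stable with non-negative coefficients by the definition of strong Rayleigh-ness. Let $S_1, \dots, S_n \subseteq [m]$ be the disjoint subsets recording $A_1, \dots, A_n$, and define $p(x_1, \dots, x_n)$ by the substitution $y_i \mapsto x_j$ for every $i \in S_j$ and $y_i \mapsto 1$ for every $i \notin S_1 \cup \dots \cup S_n$. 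Both identifying variables and specializing variables to positive constants preserve real stability, so $p \in \R_{\geq 0}[x_1, \dots, x_n]$ is real stable; and since the $S_j$ are disjoint, $p(\bm x) = \sum_{\bm a \in \Z_{\geq 0}^n} \PP{\mu}{A_1 = a_1, \dots, A_n = a_n}\, \bm x^{\bm a}$. Hence $p(\bm 1) = 1$, the coefficient $p_{\bm\kappa}$ is exactly the probability to be bounded, and $\partial_i p(\bm 1) = \EE{\mu}{A_i}$, so the hypothesis translates to the statement that $|\kappa_S - \sum_{i \in S} \partial_i p(\bm 1)| \le 1 - \epsilon$ for every $S \subseteq [n]$.

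Next I would reduce the coefficient bound to a capacity bound. As in the proof of \cref{cor:main_nh_coeff}, the standard capacity-to-coefficient inequality gives $p_{\bm\kappa} \ge \left(\prod_{i=1}^n \frac{\kappa_i^{\kappa_i} e^{-\kappa_i}}{\kappa_i!}\right) \inf_{\bm x > 0} \frac{p(\bm x)}{\bm x^{\bm\kappa}}$, and Stirling in the form $\kappa! \le e \sqrt{\kappa}\,(\kappa/e)^\kappa$ for $\kappa \ge 1$ (with $0^0/0! = 1$) shows the first factor is at least $\prod_{\kappa_i > 0} \frac{1}{e\sqrt{\kappa_i}}$. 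So it remains to prove $\inf_{\bm x > 0} \frac{p(\bm x)}{\bm x^{\bm\kappa}} \ge \epsilon^n$ under the per-subset condition above. This is exactly what the strengthened capacity bound of \cref{sec:main_proofs} is meant to supply: one wants the statement that if $p(\bm 1) = 1$ and $\max_{S \subseteq [n]} |\kappa_S - \sum_{i \in S} \partial_i p(\bm 1)| \le \delta < 1$, then $\inf_{\bm x > 0} \frac{p(\bm x)}{\bm x^{\bm\kappa}} \ge (1 - \delta)^n$; taking $\delta = 1 - \epsilon$ then completes the proof.

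The main obstacle is precisely this strengthened capacity bound, since \cref{thm:main_nh} as stated does not suffice: the per-subset hypothesis only yields $\|\bm\kappa - \nabla p(\bm 1)\|_1 \le 2\delta$ --- split $[n]$ into the coordinates where $\kappa_i - \partial_i p(\bm 1)$ is positive versus negative and bound each half --- which can be arbitrarily close to $2$, well past the threshold $1$ required by \cref{thm:main_nh}; moreover, controlling only the $\ell^1$ quantity cannot yield an $\Omega(\epsilon^n)$ bound at all. Overcoming this requires the productization technique of \cite{GL21} pushed further: reduce to the case where $p$ is a product of affine linear forms whose bipartite support is a forest, and then run a leaf-removal induction on that forest producing one factor $(1-\delta)$ per variable. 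The delicate point I expect here is that it is the per-subset quantity $\max_S |\kappa_S - \sum_{i \in S} \partial_i p(\bm 1)|$, and \emph{not} the $\ell^1$ norm, which is the invariant that survives the induction: one must verify it is preserved --- for the reduced polynomial and reduced exponent vector --- under both the productization step and the leaf-removal step.

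For tightness of the $\epsilon$-dependence I would reverse the encoding: start from the polynomials witnessing tightness of the strengthened capacity bound for a fixed $\bm\kappa$ with strictly positive entries, polarize them to multiaffine real stable polynomials (the generating polynomials of strongly Rayleigh distributions), and verify that the resulting $\PP{\mu}{A_1 = \kappa_1, \dots, A_n = \kappa_n}$ is $\Theta_{\bm\kappa}(\epsilon^n)$. I expect the extremal family to be genuinely correlated rather than a product of independent distributions, since over independent coordinates the per-subset condition already caps the total mean-deviation budget at $1 - \epsilon$ and hence buys only a single factor of $\epsilon$, far from $\epsilon^n$.
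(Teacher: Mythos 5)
Your proposal is correct and matches the paper's own proof route almost exactly: encode $\mu$ as a real stable generating polynomial and specialize variables, apply Gurvits' capacity-to-coefficient inequality together with Stirling, and finish with the strengthened per-subset capacity bound (the paper's \cref{cor:one_term_cap_bound}/\cref{cor:strong_bounds_stable}, via productization, forest extreme points, and leaf-removal induction), with tightness established by polarizing the extremal polynomial of \cref{lem:tightness}. Your observations that \cref{thm:main_nh} alone cannot suffice (since the per-subset hypothesis only controls $\|\bm\kappa - \nabla p(\bm1)\|_1$ up to $2(1-\epsilon)$) and that the tight family must be correlated are both accurate and reflect the structure of the paper's argument.
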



In fact, we prove stronger versions of this result which more directly depend on the specific values of $(\EE{\mu}{A_S} - \kappa_S)$ for all $S \subseteq [n]$; see \cref{thm:mainprobbound} and \cref{cor:mainprobbound_lite}. These results are analogous to our main coefficient bound \cref{cor:main_nh_coeff} because the coefficients and the gradient of probability generating polynomials can be interpreted as the probabilities and the expectation of the associated distribution. That said, our stronger probabilistic results require a more delicate analysis of the expectations (gradient) beyond what is required for \cref{cor:main_nh_coeff}. In particular, note that the conditions on the expectations in \cref{thm:main_imp_prob_bound} are more general than a bound on the $\ell^1$ norm of $(\EE{\mu}{A_i} - \kappa_i)_{i=1}^n$. See \cref{sec:proof_app_TSP} for further details.

Using \cref{thm:main_imp_prob_bound}, we improve the metric TSP approximation factor for the algorithm given in \cite{KKO21}.

\begin{theorem}\label{thm:improvedTSPconstant}
    There exists a randomized algorithm for metric TSP with approximation factor $\frac{3}{2} - \epsilon$ for some $\epsilon > 10^{-34}$. 
\end{theorem}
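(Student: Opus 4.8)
The plan is to run the maximum-entropy algorithm of \cite{OGSS11, AGMOGS10} exactly as analyzed by Karlin--Klein--Oveis~Gharan in \cite{KKO21, KKO23}, and to change only the one probabilistic ingredient that our new bound improves. Recall the skeleton of that analysis: one solves the Held--Karp LP to obtain a point $x^*$ in the spanning-tree polytope (after fixing an edge $e_0$), samples a spanning tree $T$ from the maximum-entropy distribution $\mu$ with marginals $x^*$, and adds a minimum-cost $T$-odd join to make the graph Eulerian. Christofides' bound alone gives $\tfrac32$; the improvement comes from exhibiting, in expectation over $T\sim\mu$, a feasible fractional $T$-join of cost strictly below $\tfrac12\cdot c(x^*)$, the gain being an additive $\epsilon\cdot c(x^*)$. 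This gain is harvested on edges that are ``good'' with constant probability, and the fact that such edges are reduced with constant probability is precisely where Prop.~5.1 of \cite{KKO21} --- our \cref{thm:doubly_exp} --- is invoked: it lower bounds $\PP{\mu}{A_1=\kappa_1,\dots,A_n=\kappa_n}$ where the $A_i$ count tree edges crossing the cuts in a chunk of the near-minimum-cut hierarchy (equivalently, the levels of the relevant polygon/hierarchy representation), under the expectation hypothesis \eqref{eq:expectation}.

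The first concrete step is to locate every lemma in \cite{KKO21, KKO23} whose constant is governed by the factor $f(\epsilon)\ge \epsilon^{2^n}\prod_{i=2}^n\frac1{\max\{\kappa_i,\kappa_{[i-1]}\}+1}$ from \cref{thm:doubly_exp}, and to replace that factor by the simply exponential bound of \cref{thm:main_imp_prob_bound}, namely $\epsilon^n\prod_{\kappa_i>0}\frac1{e\sqrt{\kappa_i}}$ (or, where the sharpest constants are needed, the refined forms \cref{thm:mainprobbound} and \cref{cor:mainprobbound_lite}, which depend directly on the individual deficits $\EE{\mu}{A_S}-\kappa_S$). The hypothesis needed for this substitution is the uniform expectation condition $|\EE{\mu}{A_S}-\kappa_S|\le 1-\epsilon$ for all $S\subseteq[n]$, which is exactly the condition \eqref{eq:expectation} already established in \cite{KKO21} for strongly Rayleigh $\mu$ (possibly after shrinking the near-minimum-cut threshold so that the deficit slack $1-\epsilon$ holds uniformly rather than only per-set). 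Because the $A_i$ here arise from fractional degrees close to $2$, the relevant $\kappa_i$ are bounded by small absolute constants, so the product $\prod_{\kappa_i>0}\frac1{e\sqrt{\kappa_i}}$ contributes only a constant-per-cut loss rather than the doubly exponential $\epsilon^{2^n}$ loss. With the exponentially larger probability lower bound in hand, each affected lemma yields a strictly better savings constant, and --- as the abstract advertises --- several auxiliary case splits in \cite{KKO21} that existed only to keep $n$ bounded can be simplified or removed.

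Having substituted the new bound lemma-by-lemma, the remaining work is to re-run the global constant optimization: re-prove each affected estimate with \cref{thm:main_imp_prob_bound} inserted, track the resulting dependence on $\epsilon$ and on the small integers $\kappa_i$, and then re-optimize the free parameters of the analysis (the near-minimum-cut threshold $\eta$, the bias/slack used when defining ``good'' edges, the cut-hierarchy chunk sizes) to maximize the provable additive gain, finally checking that it exceeds $10^{-34}\cdot c(x^*)$. Since the combinatorial scaffolding --- the hierarchy of near-minimum cuts, the polygon representation, the classification of edges, and the matching/max-flow arguments that produce the feasible fractional $T$-join --- is untouched, this is essentially a recomputation along the same chain of inequalities as in \cite{KKO23}.

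The main obstacle is bookkeeping rather than conceptual: the KKO analysis has many interlocking parameters, each individual improvement is modest, and one must verify that the improvements compound correctly and that no step silently reintroduces a doubly-exponential loss (for instance, an argument that iterates over \emph{subsets} of a collection of cuts rather than over the cuts themselves). A secondary subtlety is that \cref{thm:main_imp_prob_bound} demands the expectation condition for \emph{all} $S\subseteq[n]$ simultaneously with the same deficit $1-\epsilon$; wherever \cite{KKO21} used only a weaker or per-set form of \eqref{eq:expectation}, one must check the uniform version holds for the specific families of cuts to which the bound is applied, tightening the near-minimum-cut threshold if necessary. Once these points are settled, the claimed bound $\tfrac32-\epsilon$ with $\epsilon>10^{-34}$ follows by the same argument as in \cite{KKO21, KKO23} with the new constants inserted.
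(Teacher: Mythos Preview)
Your high-level picture is right in spirit but misses a key structural point that the paper explicitly flags, and without it the plan does not reach $10^{-34}$. The doubly-exponential bound (Prop.~5.1 of \cite{KKO21}, our \cref{thm:doubly_exp}) is \emph{not} what governs the final constants in \cite{KKO21}: as the paper states, that bound was so weak that \cite{KKO21} resorted to ad~hoc arguments to obtain the actual probability thresholds (Lemmas~5.21--5.24, 5.27, Cor.~5.9). Thus there is no doubly-exponential factor sitting in the \cite{KKO21} constants waiting to be swapped out; the relevant comparison is the new bound against those ad~hoc arguments, not against Prop.~5.1.

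More concretely, after applying \cref{thm:main_imp_prob_bound} to the lemmas that \emph{do} follow from expectation information (5.21 and 5.22), two lemmas (5.23 and 5.24) become the bottleneck at roughly $2\cdot 10^{-10}$, and these \emph{cannot} be handled by \cref{thm:main_imp_prob_bound} because their hypotheses are not purely of the form $|\EE{\mu}{A_S}-\kappa_S|\le 1-\epsilon$. The paper deals with them by separate, sharpened ad~hoc arguments: for 5.23 one parameterizes the existing proof (a version of Lemma~A.1 using Hoeffding's theorem, log-concavity, and \cref{lem:updowntruncation}/\cref{lem:SRA=nA}) and exploits $\epsilon_{1/2}\le 0.0002$; for 5.24 the new capacity bound is used to reduce to a special case with tight expectation windows, and that special case is then pushed through with Bernoulli/log-concavity arguments. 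Only after these two non-capacity improvements is the minimum probability $p$ lifted to $1.5\cdot 10^{-9}$, and plugging that into the black-box relation $\frac32 - 9.7\,p^2\cdot 10^{-17}$ yields $\epsilon>10^{-34}$. Your proposal's claim that ``the combinatorial scaffolding is untouched, this is essentially a recomputation'' is therefore too optimistic: without the extra work on 5.23 and 5.24 you would be stuck near the old $p\approx 2\cdot 10^{-10}$, which gives only about $4\cdot 10^{-36}$.
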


This is about a $100$ times improvement over the result of \cite{KKO22}. Thus our improvement in terms of the approximation factor itself may be smaller than anticipated, given that we were able to improve the probability bound in \cref{thm:main_imp_prob_bound} from doubly exponential to simply exponential. The reason for this is that while \cref{thm:doubly_exp} was useful in \cite{KKO21} to quickly determine which events occurred with constant probability (and indeed provided a single unifying explanation for why one should expect many of their probabilistic bounds to hold), it gave such small guarantees that \cite{KKO21} resorted to ad hoc arguments instead to give their final probabilistic bounds. 

We show that \cref{thm:main_imp_prob_bound} alone can be used to give bounds that are comparable to the ad hod methods of \cite{KKO21} (and, in several important cases, much better) whenever the bounds came purely from information on the expectations as in \eqref{eq:expectation}. Thus, we believe our main contribution to work on metric TSP is a version of \cref{thm:doubly_exp} that is ``reasonable" to use, allowing one to show a similar approximation factor but with a more streamlined proof. 


Unfortunately, not all of the bounds in \cite{KKO21} follow from expectation information, and two of them become bottlenecks for improving the approximation factor after applying \cref{thm:main_imp_prob_bound} to the other statements. Thus, to demonstrate \cref{thm:improvedTSPconstant} we need to sharpen these bounds using other techniques. 
For one of these lemmas we show that the existing proof in \cite{KKO21} was far from tight, and in the other we refine their proof. In particular, using  \cref{thm:main_imp_prob_bound}, we show we can reduce this second lemma to a special case that is possible to analyze more carefully.  See \cref{sec:proof_app_TSP} for further details.

\section{Technical Overview} \label{sec:tech_overview}

In this section we discuss the proof strategy of
our main capacity and coefficient bounds, \cref{thm:main_nh} and \cref{cor:main_nh_coeff}, and their stronger forms.

\paragraph{Notation.}

Given a vector $\bm{z} \in \R^{E}$ and a subset $S$ of $E$, let $\bm{z}^S := \prod_{e \in S} z_e$. Let $\mu: \{0,1\}^{E} \to \R$ be a probability distribution over subsets of $E$. The generating polynomial $g_\mu \in  \R_{\geq 0}[\{z_{e}\}_{e\in E}]$ of $\mu$ is defined as
$$ g_\mu(z):=\sum_{S \subseteq E} \mu(S) \cdot \bm{z}^S.$$
The distribution $\mu$ is \textbf{strongly Rayleigh} if $g_\mu$ is real stable, where a polynomial $p \in \mathbb{R}[z_1,\dots,z_n]$ is \textbf{real stable} if $p(\bm{z}) \neq 0$ whenever $\Im(z_i) > 0$ for all $i \in [n]$ (i.e., when all inputs are in the complex upper half-plane). See \cite{BBL09} for much more on strongly Rayleigh measures. Further, given a polynomial $p \in \R_{\geq 0}[x_1,\ldots,x_n]$ and $\bm\kappa \in \Z_{\geq 0}^n$, the \textbf{capacity} of $p$ is defined as
\[
    \cpc_{\bm\kappa}(p) := \inf_{\bm{x} > 0} \frac{p(\bm{x})}{\bm{x}^{\bm\kappa}}.
\]
Finally, we let $p_{\bm\kappa}$ denote the coefficient of $\bm{x}^{\bm\kappa}$ in $p$.

\paragraph{Conceptual strategy.}

We first give an overarching view of the strategy used to prove our main results, as well as the key similarities and differences compared to that of \cite{GL21}. The general idea for proving our bounds is to find a simple and sparse underlying structure for the worst-case inputs. The space of all real stable polynomials can be complicated, but we show that the worst-case polynomials for our bounds are far simpler: they are ``sparse'' products of affine linear forms. More concretely, we reduce the space of input polynomials (and the corresponding combinatorial structures) as follows:
\begin{center}
\begin{tabular}{ccccc}
    real stable polynomials & $\implies$ & products of affine linears & $\implies$ & sparse products of affine linears \\
    matroids & $\implies$ & matrices & $\implies$ & forests
\end{tabular}
\end{center}
The first reduction step uses the idea of productization which was the key idea from \cite{GL21}. This allows for one to utilize the matrix structure inherent to products of affine linear forms.

The second reduction step is then new to this paper. We first show that we may restrict to the extreme points of the set of matrices corresponding to products of affine linear forms, and then we show that these extreme matrices are supported on the edges of forests. This implies a significant decrease in density of the matrices: general graphs can have quadratically many edges, whereas forests can only have linearly many. This allows for an intricate but clean induction on the leaf vertices of these forests, which yields the strongest bounds of this paper. Additionally, it is this step that allows for bounds which do not depend on the total degree of the polynomial, and this was a crucial barrier to applying the bounds of \cite{GL21} to metric TSP.

\subsection{Conceptual Strategy, in More Detail} \label{sec:strat_outline}

We now go through the steps of the conceptual strategy described above in more detail. Let us first restate our main capacity and coefficient bounds.

\begin{theorem}[= \cref{thm:main_nh} and \cref{cor:main_nh_coeff}] \label{thm:tech_main_bounds}
    Let $p \in \R_{\geq 0}[x_1,\ldots,x_n]$ be a real stable polynomial in $n$ variables, and fix any $\bm\kappa \in \Z^n$ with non-negative entries.
    If $p(\bm{1}) = 1$ and $\|\bm\kappa - \nabla p(\bm{1})\|_1 < 1$, then
    \[
        \cpc_{\bm\kappa}(p) \geq (1 - \|\bm\kappa - \nabla p(\bm{1})\|_1)^n
    \]
    and
    \[
        p_{\bm\kappa} \geq \left(\prod_{i=1}^n \frac{\kappa_i^{\kappa_i} e^{-\kappa_i}}{\kappa_i!}\right) \left(1 - \|\bm\kappa - \nabla p(\bm{1})\|_1\right)^n,
    \]
    where $\cpc_{\bm\kappa}(p) := \inf_{\bm{x} > 0} \frac{p(\bm{x})}{\bm{x}^{\bm\kappa}}$ is the \textbf{capacity} of $p$ and $p_{\bm\kappa}$ is the coefficient of $\bm{x}^{\bm\kappa}$ in $p$. The dependence on $(1 - \|\bm\kappa - \nabla p(\bm{1})\|_1)$ in these bounds is tight for any fixed $\bm\kappa$ with strictly positive entries.
\end{theorem}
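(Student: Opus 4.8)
The plan is to prove the capacity bound $\cpc_{\bm\kappa}(p) \geq \big(1 - \|\bm\kappa - \nabla p(\bm 1)\|_1\big)^n$ first; the coefficient bound then follows from it by the standard degree‑independent coefficient–capacity inequality for real stable polynomials, which contributes exactly the factor $\prod_i \kappa_i^{\kappa_i} e^{-\kappa_i}/\kappa_i!$ (for a univariate real‑rooted polynomial of degree $d$ the ratio $p_\kappa/\cpc_\kappa(p)$ is at least $\binom{d}{\kappa}\kappa^\kappa (d-\kappa)^{d-\kappa}/d^d \geq \kappa^\kappa e^{-\kappa}/\kappa!$ for every $d$, and this passes to the multivariate setting one variable at a time). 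So it suffices to prove the capacity inequality, and for that I would follow the three‑stage reduction sketched in the overview. The first stage is productization (\cite{GL21}): it lets one assume $p$ is a product of affine linear forms with non‑negative coefficients, since this reduction controls the only data the target bound sees — $p(\bm 1)$ and $\nabla p(\bm 1)$ — and does not increase $\cpc_{\bm\kappa}(p)$.

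The second stage replaces these forms by a sparse model. Normalizing each form to value $1$ at $\bm 1$, write $p = \prod_j \ell_j$ and encode it as a non‑negative matrix $A$ with rows indexed by a homogenizing coordinate and $x_1,\dots,x_n$ and columns by the forms, so that the columns of $A$ are stochastic and rows $1,\dots,n$ have sums $\partial_1 p(\bm 1),\dots,\partial_n p(\bm 1)$. Fixing $\nabla p(\bm 1)$ and the number of forms makes the admissible $A$ a transportation polytope, and since the right‑hand side of the target inequality is then fixed it is enough to lower bound $\cpc_{\bm\kappa}(p_A)$ at the minimizing $A$. The key observation is that $1/\cpc_{\bm\kappa}(p_A) = \sup_{\bm x>0}\bm x^{\bm\kappa}/p_A(\bm x)$ is a convex function of $A$: for each fixed $\bm x$, $\log\big(\bm x^{\bm\kappa}/p_A(\bm x)\big) = \sum_i \kappa_i\log x_i - \sum_j \log \ell_j(\bm x)$ is convex in $A$ (constant minus a sum of the concave functions $\log \ell_j(\bm x)$, each concave in the affine quantity $\ell_j(\bm x)$), and a supremum of convex functions is convex. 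Hence the minimum of $\cpc_{\bm\kappa}(p_A)$ over the polytope is attained at an extreme point, and extreme points of a transportation polytope are exactly those with acyclic support; so we may assume the bipartite support graph of $A$ is a forest. This is precisely what removes any dependence on the degree of $p$: a forest always supplies a leaf, and the induction below runs on the number of variables, never on the degree.

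The third stage is the induction on $n$, peeling a leaf of the forest. A forest with an edge has a leaf, and a counting argument shows that if every variable vertex has degree $\geq 2$ then some form vertex is a leaf, so in either case there is either a variable $x_i$ contained in a single form, or a form $\ell_j = (1-a) + a x_i$ involving a single variable. In the first case one optimizes the contribution of $x_i$ one‑dimensionally — e.g.\ when $\kappa_i = 1$, the relevant infimum is $\inf_{x_i>0}\big((1-a)/x_i + a\big) = a = \partial_i p(\bm 1)$ — which peels off a factor of at least $1 - |\kappa_i - \partial_i p(\bm 1)| \geq 1 - \|\bm\kappa - \nabla p(\bm 1)\|_1$, and a triangle‑inequality computation (using that $a$ is a column entry and the columns are stochastic) shows the resulting $(n-1)$‑variable product of affine forms, after renormalizing to value $1$ at $\bm 1$, has $\ell^1$‑defect no larger than that of $p$; multiplying by the inductive bound gains exactly one more power of $1 - \|\bm\kappa - \nabla p(\bm 1)\|_1$, and the base case $n=1$ is a direct univariate optimization. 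Peeling a leaf form needs a more careful version of the same estimate. It is in fact cleanest to prove a stronger statement in which the factor peeled at each stage is $1 - |\kappa_T - \partial_T p(\bm 1)|$ for the set $T$ of surviving variables, so that the $\ell^1$ bound follows from $|\kappa_T - \partial_T p(\bm 1)| \le \|\bm\kappa - \nabla p(\bm 1)\|_1$; this extra room is what makes the bookkeeping go through, and is also what the probabilistic applications require.

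The main obstacle I anticipate is this last stage: one must choose which leaf to peel, treat leaf forms as well as leaf variables, and control how $\partial_\bullet p(\bm 1)$ and the defect change under peeling and renormalization, so that precisely one factor of $\big(1 - \|\bm\kappa - \nabla p(\bm 1)\|_1\big)$ — and no more — is lost per variable, all while staying within products of affine forms and independent of the degree. (Establishing the convexity of $1/\cpc_{\bm\kappa}(p_A)$ and that the extreme points are forests in stage two is comparatively routine.) Finally, for tightness one exhibits, for each fixed $\bm\kappa$ with strictly positive entries, explicit products of affine linear forms supported on a forest that saturate every inequality above; already for $n=1$ the instance $p(x) = x^{\kappa_1}\big((1-\delta) + \delta x\big)$, with $\cpc_{\kappa_1}(p) = 1-\delta$ and $|\kappa_1 - p'(1)| = \delta$, shows the dependence on $\delta$ cannot be improved, and a suitable forest construction extends this to general $n$.
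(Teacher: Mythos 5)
Your proposal follows exactly the paper's route: productization to products of affine linear forms, reduction to extreme points of a transportation polytope (your convexity of $1/\cpc_{\bm\kappa}(p_A)$ via log-convexity of $\bm{x}^{\bm\kappa}/p_A(\bm{x})$ is equivalent to the paper's log-concavity of $A \mapsto \cpc_{\bm\kappa}(\phi(A))$ proved by AM–GM, and both give minimization at extreme points), bipartite-forest structure of the extreme points, leaf-peeling induction, and finally passage from capacity to coefficients via the degree-independent Gurvits inequality. The one place you deviate slightly is the strengthened inductive statement: you propose tracking $|\kappa_T - \partial_T p(\bm{1})|$ over surviving variable sets $T$, whereas the paper's \cref{thm:two_term_cap_bound} keeps separate one-sided parameters $\epsilon_k = 1 - \max_{|S|=k}\sum_{j\in S}(\alpha_j - \kappa_j)$ and $\delta_k = 1 - \max_{|S|=k}\sum_{j\in S}(\kappa_j - \alpha_j)$, which is necessary because the two peeling operations (setting $x_i = 0$ when $\kappa_i = 0$, versus $\partial_{x_i}$ when $\kappa_i = 1$ and the column is a leaf) control only one sign of the defect each (see \cref{lem:bound_kappa_0} versus \cref{lem:bound_kappa_1}); you correctly flag this bookkeeping as the main unresolved step, and it is indeed where the real work lies.
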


Analogous bounds required for the metric TSP application then follow from interpreting desired quantities as the coefficients and gradient of certain real stable polynomials. Specifically, the strongly Rayleigh probabilities we wish to lower bound are the coefficients of the corresponding real stable generating polynomial, and the expectations of the associated random variables are given by the gradient of that polynomial. We leave further details to \cref{sec:proof_app_TSP}.

We now discuss the proof of \cref{thm:tech_main_bounds}. First note that the coefficient bound follows from the capacity bound. This immediately follows from Corollary 3.6 of \cite{Gur09}, which implies
\begin{equation} \label{eq:G-bound}
    p_{\bm\kappa} \geq \left(\prod_{i=1}^n \frac{\kappa_i^{\kappa_i} e^{-\kappa_i}}{\kappa_i!}\right) \cpc_{\bm\kappa}(p).
\end{equation}
Thus what remains to be proven is the capacity bound
\[
    \cpc_{\bm\kappa}(p) \geq \left(1 - \|\bm\kappa - \nabla p(\bm{1})\|_1\right)^n,
\]
which is precisely the bound of \cref{thm:main_nh}, as well as its tightness, which follows from considering a particular example of $p$ (see \cref{lem:tightness}).

The remainder of the proof then has four main steps. We also note here that these proof steps actually imply stronger bounds than \cref{thm:main_nh}, see \cref{cor:strong_bounds_stable} for the formal statement. These stronger bounds are required for the metric TSP application.

\paragraph{Step 1: Reduce to products of affine linear forms via productization.}

We first generalize the productization technique of \cite{GL21} to non-homogeneous real stable polynomials. The upshot of this technique is that it implies it is sufficient to prove \cref{thm:main_nh} for products of affine linear forms with non-negative coefficients (see \cref{cor:prod-cap-bound}). Such polynomials correspond to $d \times (n+1)$ $\R_{\geq 0}$-valued matrices with row sums $\bm{1}$ and column sums $\bm\alpha$ equal the entries of the gradient of the polynomial, via
\[
    \phi: A \mapsto \prod_{i=1}^d \left(a_{i,n+1} + \sum_{j=1}^n a_{i,j} x_j\right).
\]
This gives far more structure to work with, beyond that of real stable polynomials in general. This part is a straightforward generalization of the analogous result of \cite{GL21}.

\paragraph{Step 2: Reduce to extreme points.}

The set of $\R_{\geq 0}$-valued matrices with row sums $\bm{1}$ and column sums $\bm\alpha$ forms a convex polytope $P_{\bm\alpha}^d$, and thus the polynomials we now must consider correspond to the points of this polytope via the map $\phi$ defined above. Inspired by a result of Barvinok (see \cref{lem:log-concave_cap}), we next show that the function
\[
    A \mapsto \cpc_\kappa(\phi(A))
\]
is log-concave on the above described polytope. Since we want to minimize the capacity, this implies we may further restrict to the polynomials associated to the extreme points of the polytope.

\paragraph{Step 3: Extreme points correspond to bipartitioned forests.}

Any $\R_{\geq 0}$-valued matrix $A$ can be interpreted as the weighted bipartite adjacency matrix of a bipartite graph, where the left vertices correspond to the rows of $A$ and the right vertices correspond to the columns of $A$. A matrix $A \in P_{\bm\alpha}^d$ being extreme implies the associated bipartite graph has no cycles. This implies the associated bipartite graph is a forest (see \cref{lem:forest_extreme}). The sparsity properties of such matrices implies a simple structure for the associated polynomials, which is particularly amenable to an intricate but clean induction.\footnote{Note that it is already mentioned in \cite{GL21} that the supports of the extreme points correspond to forests, but the application of this observation in \cite{GL21} is somewhat ``naive'' and kind of brute-force: it was mainly used to describe an (inefficient) algorithm to compute the capacity lower bound for products of linear forms, which is not related to main lower bound in this paper. Additionally, its use in \cite{GL21} is quite conceputally far from how it could actually be used to improve the TSP approximation factor.}

\paragraph{Step 4: Induction on leaf vertices of the bipartitioned forests.}

Leaf vertices in the forest corresponding to a given matrix $A \in P_{\bm\alpha}^d$ indicate rows or columns of the matrix $A$ which have exactly one non-zero entry. If a row of $A$ has exactly one non-zero entry, the induction proceeds in a straightforward fashion, by simply removing the corresponding row of $A$ and recalculating the column sums (see the $d \geq n+1$ case of the proof of \cref{thm:two_term_cap_bound}).

If a column of $A$ (say column $i$) has exactly one non-zero entry, then the induction is more complicated. We prove lemmas showing how much the capacity can change after applying the partial derivative $\partial_{x_i}$ (when $\kappa_i \geq \alpha_i$, see \cref{lem:bound_kappa_1}) or setting $x_i$ to 0 (when $\kappa_i < \alpha_i$, see \cref{lem:bound_kappa_0}). Since column $i$ has only one entry, applying $\partial_{x_i}$ corresponds to removing column $i$ and the row of $A$ which contains the non-zero entry, and setting $x_i$ to $0$ corresponds to removing column $i$. After renormalizing the row sums and recalculating the column sums, the proof again proceeds by induction. (See the proof of \cref{thm:two_term_cap_bound} to see the above arguments presented formally.) \cref{ex:delta_tight} and \cref{ex:epsilon_tight} show that the distinction between the $\kappa_i \geq \alpha_i$ and $\kappa_i < \alpha_i$ cases is not an artifact of the proof.

\paragraph{Some comments on tightness of the bounds.}

The main coefficient bound of \cref{cor:main_nh_coeff} is proven via two different bounds, as discussed at the beginning of this section. That is, one first bounds the coefficient in terms of the capacity \eqref{eq:G-bound} via Corollary 3.6 of \cite{Gur09}, and then one bounds the capacity via the steps outlined above. Thus while the capacity bound (\cref{thm:main_nh}) is tight for $\bm\kappa > \bm{0}$, the coefficient bound (\cref{cor:main_nh_coeff}) may not be. We note that the coefficient bound is likely close to tight in the case that $(1 - \|\bm\kappa - \nabla p(\bm{1})\|_1)$ is close to $1$, but it seems this tightness deteriorates as $(1 - \|\bm\kappa - \nabla p(\bm{1})\|_1)$ gets close to $0$. That said, the dependence on $(1 - \|\bm\kappa - \nabla p(\bm{1})\|_1)$ in \cref{thm:main_nh} and \cref{cor:main_nh_coeff} is tight for $\bm\kappa > \bm{0}$ by \cref{lem:tightness}, though there is still potential for improvement in the more refined capacity bounds; see \cref{sec:examples}.

That said, tight coefficient lower bounds in the univariate case can be achieved by directly applying Hoeffding's theorem (\cref{thm:hoeffding}), and these lower bounds resemble the coefficient lower bound of \cref{cor:main_nh_coeff}. Thus one can view \cref{cor:main_nh_coeff} as a step towards a multivariate generalization of Hoeffding's theorem. It is an interesting question whether or not the techniques used here can be extended to a full multivariate generalization of Hoeffding's theorem.

\subsection{Example: The Univariate Case} \label{sec:univ_case}

In this section, we demonstrate the proof of \cref{thm:main_nh} in the univariate case. This will serve as a sort of proof of concept for the more general proof.

Here we consider real stable non-homogeneous polynomials $p \in \R_{\geq 0}[x_1]$ such that $p(1) = 1$ and $\nabla p(1) = \alpha_1$, and we define $\epsilon := 1 - |\alpha_1 - \kappa_1| > 0$. By Step 1 above, we may assume that $p$ is of the form
\[
    p(x_1) = \prod_{i=1}^d (a_{i,1} x_1 + a_{1,2}),
\]
where $A$ is an $\R_{\geq 0}$-valued $d \times 2$ matrix with row sums $1$ and column sums $(\alpha_1,d-\alpha_1)$. By Steps 2-3, we may further assume $A$ is the weighted bipartite adjacency matrix of a forest. If every row of $A$ contains exactly one non-zero entry then $p(x_1) = x_1^{k_1}$, and the result is trivial in this case. Otherwise, $d-1$ rows of $M$ have exactly one non-zero entry (see \cref{lem:left_leaves}). Thus for some $k \leq d-1$ we have
\[
    p(x_1) = x_1^k (a x_1 + b),
\]
where $a+b = 1$ and $a + k = \alpha_1$. Since $\kappa_1 \in \Z_{\geq 0}$ and
\[
    \epsilon = 1 - |\alpha_1 - \kappa_1| = 1 - |a + k - \kappa_1|,
\]
we have that $k$ is equal to either $\kappa_1$ or $\kappa_1 - 1$. If $k = \kappa_1$, then
\[
    \cpc_{\kappa_1}(p) = \inf_{x_1 > 0} \frac{x_1^k(ax_1+b)}{x_1^k} = b \qquad \text{and} \qquad \epsilon = 1 - |a + k - \kappa_1| = 1-a = b.
\]
If $k = \kappa_1 - 1$, then
\[
    \cpc_{\kappa_1}(p) = \inf_{x_1 > 0} \frac{x_1^k(ax_1+b)}{x_1^{k+1}} = a \qquad \text{and} \qquad \epsilon = 1 - |a + k - \kappa_1| = 1 - (1-a) = a.
\]
Therefore in both cases we have
\[
    \cpc_{\kappa_1}(p) = \epsilon = (1 - |\alpha_1 - \kappa_1|),
\]
which proves \cref{thm:main_nh} in the univariate case and demonstrates the tight dependence on $(1 - \|\bm\kappa - \nabla p(\bm{1})\|_1)$ in this case.



\section{Proof of Application: Metric TSP} \label{sec:proof_app_TSP}

The following is the strongest probability lower bound, which we will use for the metric TSP application. In what follows, we let $A_S := \sum_{i \in S} A_i$ and $\kappa_S := \sum_{i \in S} \kappa_i$. Note that if
\[
    \big|\EE{\mu}{A_S} - \kappa_S\big| \leq 1-\epsilon,
\]
then \cref{thm:main_imp_prob_bound} immediately follows from \cref{thm:mainprobbound}, \cref{lem:tightness}, and a standard computation.


\begin{theorem}[Strongest form of the probability bound]\label{thm:mainprobbound}
    Let $\mu$ be a strongly Rayleigh distribution on $[m]$, let $A_1,\ldots,A_n$ be random variables counting the number of elements contained in some associated disjoint subsets of $[m]$, and fix $\bm\kappa \in \Z_{\geq 0}^n$. Suppose for all $S \subseteq [n]$ we have $|\EE{\mu}{A_S} - \kappa_S| < 1$. Define $\bm\epsilon,\bm\delta \in \R_{> 0}^n$ via
    \[
        \delta_k := 1 + \min_{S \in \binom{[n]}{k}} \left(\EE{\mu}{A_S} - \kappa_S\right) \qquad \text{and} \qquad \epsilon_k := 1 - \max_{S \in \binom{[n]}{k}} \left(\EE{\mu}{A_S} - \kappa_S\right).
    \]
    Then we have
    \[
        \PP{\mu}{A_1 = \kappa_1, \ldots, A_n = \kappa_n} \geq \prod_{i=1}^n \frac{\kappa_i^{\kappa_i} e^{-\kappa_i}}{\kappa_i!} \cdot \min_{0 \leq \ell \leq n} \prod_{k=1}^\ell \epsilon_k \prod_{k=1}^{n-\ell} \delta_k.
    \]
\end{theorem}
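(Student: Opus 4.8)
The plan is to reduce \cref{thm:mainprobbound} to the capacity-type bound for products of affine linear forms established via Steps 1--4 of the technical overview, and then translate between the probabilistic language and the polynomial language. Concretely, let $g_\mu$ be the generating polynomial of $\mu$ in the variables $\{z_e\}_{e \in [m]}$; since $\mu$ is strongly Rayleigh, $g_\mu$ is real stable. The random variables $A_1, \ldots, A_n$ correspond to disjoint subsets $E_1, \ldots, E_n \subseteq [m]$, and $E_0 := [m] \setminus \bigcup_i E_i$ is whatever is left over. Substituting $z_e = x_i$ for $e \in E_i$ (and $z_e = x_0 := 1$ on $E_0$) yields a real stable polynomial $p \in \R_{\geq 0}[x_1, \ldots, x_n]$ (real stability is preserved by diagonal restrictions and by setting variables to positive reals, up to the usual care) with $p(\bm 1) = 1$, whose coefficient $p_{\bm\kappa}$ is exactly $\PP{\mu}{A_1 = \kappa_1, \ldots, A_n = \kappa_n}$, and whose gradient at $\bm 1$ satisfies $\partial_{x_i} p(\bm 1) = \EE{\mu}{A_i}$. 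More generally, for $S \subseteq [n]$, the directional derivative $\sum_{i \in S}\partial_{x_i} p(\bm 1) = \EE{\mu}{A_S}$, so the hypothesis $|\EE{\mu}{A_S} - \kappa_S| < 1$ becomes a hypothesis on the partial sums of $\bm\alpha - \bm\kappa$ where $\bm\alpha := \nabla p(\bm 1)$. By \eqref{eq:G-bound} it suffices to prove the capacity bound
\[
    \cpc_{\bm\kappa}(p) \geq \min_{0 \leq \ell \leq n} \prod_{k=1}^\ell \epsilon_k \prod_{k=1}^{n-\ell} \delta_k,
\]
with $\epsilon_k, \delta_k$ defined from the $\alpha_S - \kappa_S$ as in the statement.

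Next I would invoke Steps 1--3: by productization (\cref{cor:prod-cap-bound}) it suffices to prove this when $p = \phi(A)$ for a nonnegative $d \times (n+1)$ matrix $A$ with row sums $\bm 1$ and column sums $(\bm\alpha, d - \|\bm\alpha\|_1)$; by log-concavity of $A \mapsto \cpc_{\bm\kappa}(\phi(A))$ on the polytope $P_{\bm\alpha}^d$ we may take $A$ to be an extreme point; and by \cref{lem:forest_extreme} the support of $A$ is a forest. The heart of the argument is then the induction of Step 4, which I expect is carried out in \cref{thm:two_term_cap_bound} (the ``two-term'' refers to the two cases $\kappa_i \geq \alpha_i$ versus $\kappa_i < \alpha_i$ for a leaf column). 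The induction peels off a leaf of the bipartite forest: a leaf row contributes a monomial factor $x_j$ and is removed with the column sum $\alpha_j$ decremented by $1$ (so $\kappa_j$ must also decrement, and no $\epsilon$ or $\delta$ factor is spent); a leaf column $i$ is handled by applying $\partial_{x_i}$ when $\kappa_i \geq \alpha_i$ (\cref{lem:bound_kappa_1}, costing a $\delta$-type factor) or by setting $x_i = 0$ when $\kappa_i < \alpha_i$ (\cref{lem:bound_kappa_0}, costing an $\epsilon$-type factor). The accounting must be done so that after $\ell$ columns of the first kind and $n - \ell$ of the second are removed, the accumulated factor is at least $\prod_{k=1}^\ell \epsilon_k \prod_{k=1}^{n-\ell}\delta_k$ for the particular $\ell$ realized; taking the minimum over $\ell$ then gives a bound independent of the order of peeling. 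The key technical point is that removing a leaf column $i$ only decreases expectations along subsets containing $i$ and does not increase the relevant gaps, so the quantities $\epsilon_k, \delta_k$ for the smaller instance dominate those needed --- i.e., the $\epsilon_k$'s and $\delta_k$'s are ``monotone'' under the peeling operation in the right direction. Verifying this monotonicity of the $\min/\max$-over-$\binom{[n]}{k}$ quantities under deletion of a coordinate is where I would be most careful.

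The main obstacle, as I see it, is not any single estimate but the bookkeeping that reconciles two different indexings: the induction naturally tracks a multiset of ``costs'' $\{$ one per column, either an $\epsilon$-cost or a $\delta$-cost $\}$ whose values depend on the order of removal, whereas the statement packages these as $\prod_{k=1}^\ell \epsilon_k \prod_{k=1}^{n-\ell}\delta_k$ with $\epsilon_k$ the $k$-th smallest-type quantity over \emph{all} $k$-subsets of the \emph{original} index set $[n]$. Making these match requires showing that whatever order the induction removes columns in, the $j$-th $\epsilon$-cost incurred is at least $\epsilon_j$ (defined globally from the original data) and similarly for $\delta$; this should follow because at the step where the $j$-th $\epsilon$-type column is removed there remain at least $j$ coordinates contributing to that max/min, and the gaps have only improved. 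Once this monotonicity/interleaving lemma is in hand, the rest is the clean induction promised in the overview, with the univariate computation in \cref{sec:univ_case} serving as the base-case template (there $\epsilon = \epsilon_1 = \delta_1$ and the bound is exactly $\cpc_{\kappa_1}(p) = \epsilon$). I would also need the reduction's boundary care: when some $E_i = \emptyset$ or $\kappa_i = 0$ the corresponding factor $\kappa_i^{\kappa_i}e^{-\kappa_i}/\kappa_i! = 1$, so such coordinates are harmless, and the condition $|\EE{\mu}{A_S} - \kappa_S| < 1$ for all $S$ (in particular $S = \emptyset$, trivially $0 < 1$) ensures $\epsilon_k, \delta_k > 0$ throughout.
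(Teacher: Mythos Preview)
Your proposal is correct and follows essentially the same route as the paper: reduce the probability to a coefficient of a real stable polynomial, apply the coefficient-to-capacity bound \eqref{eq:G-bound}, and then invoke the capacity bound for real stable polynomials (the paper packages Steps~1--4 as \cref{thm:two_term_cap_bound} and \cref{cor:strong_bounds_stable}, which it simply cites at this point). The bookkeeping you flag as the main obstacle is exactly what \cref{lem:bound_kappa_0} and \cref{lem:bound_kappa_1} track, via the index shift $\epsilon_k \mapsto \epsilon_{k+1}$ (resp.\ $\delta_k \mapsto \delta_{k+1}$) in the residual problem after each $\epsilon$-type (resp.\ $\delta$-type) peel; one minor difference is that the paper peels any coordinate with $\kappa_j=0$ first (not requiring it to be a leaf column), but your leaf-based case split is equivalent since a leaf column forces $\kappa_i\in\{0,1\}$ under the hypothesis.
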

\begin{proof}
    Let $q$ be the probability generating polynomial of $\mu$, and let $p$ be the polynomial obtained by setting the variables of $q$ associated to $A_i$ to $x_i$ for all $i \in [n]$, and setting all other variables to equal $1$. Since $\mu$ is strongly Rayleigh, $p$ is real stable. Further, $p(\bm{1}) = 1$, $\nabla p(\bm{1}) = (\EE{\mu}{A_i})_{i=1}^n$, and $\PP{\mu}{A_1 = \kappa_1, \ldots, A_n = \kappa_n}$ is the $\bm{x}^{\bm\kappa}$ coefficient of $p$. Thus Gurvits' capacity inequality \eqref{eq:G-bound} and \cref{cor:strong_bounds_stable} imply the desired result.
\end{proof}

We also give a slightly weaker bound which is a bit easier to use in practice. Note that \cref{thm:main_imp_prob_bound} also follows from \cref{cor:mainprobbound_lite}.

\begin{corollary} \label{cor:mainprobbound_lite}
    Let $\mu$ be a strongly Rayleigh distribution on $[m]$, let $A_1,\ldots,A_n$ be random variables counting the number of elements contained in some associated disjoint subsets of $[m]$, and fix $\bm\kappa \in \Z_{\geq 0}^n$. Suppose for all $S \subseteq [n]$ we have $|\EE{\mu}{A_S} - \kappa_S| < 1$. Define $\bm\epsilon \in \R_{>0}^n$ via
    \[
        \epsilon_k := 1 - \max_{\substack{S \subseteq [n] \\ |S| \leq k}} \big|\EE{\mu}{A_S} - \kappa_S\big|
    \]
    Then we have
    \[
        \PP{\mu}{A_1 = \kappa_1, \ldots, A_n = \kappa_n} \geq \prod_{i=1}^n \frac{\kappa_i^{\kappa_i} e^{-\kappa_i}}{\kappa_i!} \cdot \prod_{k=1}^n \epsilon_k.
    \]
\end{corollary}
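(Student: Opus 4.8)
The plan is to derive \cref{cor:mainprobbound_lite} directly from the stronger \cref{thm:mainprobbound} by showing that the simpler quantity $\epsilon_k$ defined here is always a lower bound for each of the factors $\epsilon_k$ and $\delta_k$ of that theorem (where I abuse notation slightly: call the quantities in \cref{thm:mainprobbound} $\epsilon_k^{\mathrm{str}}$ and $\delta_k^{\mathrm{str}}$ to avoid collision). First I would observe that for any $S$ with $|S| = k$ we have $|\EE{\mu}{A_S} - \kappa_S| \leq \max_{|S'| \leq k} |\EE{\mu}{A_{S'}} - \kappa_{S'}|$, so in particular $\EE{\mu}{A_S} - \kappa_S \leq \max_{|S'| \leq k} |\EE{\mu}{A_{S'}} - \kappa_{S'}|$ and $-(\EE{\mu}{A_S} - \kappa_S) \leq \max_{|S'| \leq k} |\EE{\mu}{A_{S'}} - \kappa_{S'}|$. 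Taking the max (resp.\ negating the min) over $S \in \binom{[n]}{k}$ gives
\[
    \epsilon_k^{\mathrm{str}} = 1 - \max_{S \in \binom{[n]}{k}}\left(\EE{\mu}{A_S} - \kappa_S\right) \geq \epsilon_k \qquad\text{and}\qquad \delta_k^{\mathrm{str}} = 1 + \min_{S \in \binom{[n]}{k}}\left(\EE{\mu}{A_S} - \kappa_S\right) \geq \epsilon_k,
\]
where $\epsilon_k$ is the quantity of this corollary. Note the hypothesis $|\EE{\mu}{A_S} - \kappa_S| < 1$ for all $S$ guarantees $\epsilon_k > 0$, so these are honest positive lower bounds and in particular $\epsilon_k \leq 1$ as well (taking $S = \emptyset$ or noting $\epsilon_n \leq \epsilon_k$ is not needed — monotonicity of $\epsilon_k$ in $k$ will be used next).

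Next I would plug these inequalities into the bound of \cref{thm:mainprobbound}. For every $0 \leq \ell \leq n$ we get
\[
    \prod_{k=1}^\ell \epsilon_k^{\mathrm{str}} \prod_{k=1}^{n-\ell} \delta_k^{\mathrm{str}} \geq \prod_{k=1}^\ell \epsilon_k \prod_{k=1}^{n-\ell} \epsilon_k,
\]
so the minimum over $\ell$ on the left is at least $\min_{0 \leq \ell \leq n} \prod_{k=1}^\ell \epsilon_k \prod_{k=1}^{n-\ell} \epsilon_k$. The remaining point is to check that this last minimum over $\ell$ equals $\prod_{k=1}^n \epsilon_k$, which is exactly the $\ell = n$ term (and also, since the indices $1,\dots,n-\ell$ only ever range up to $n$, the $\ell = 0$ term $\prod_{k=1}^n \epsilon_k$). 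This follows from the monotonicity $\epsilon_1 \geq \epsilon_2 \geq \cdots \geq \epsilon_n$ together with $\epsilon_k \leq 1$: replacing the two partial products $\prod_{k=1}^\ell \epsilon_k \cdot \prod_{k=1}^{n-\ell} \epsilon_k$ by the single product $\prod_{k=1}^n \epsilon_k$ amounts to comparing $\{1,\dots,\ell\} \sqcup \{1,\dots,n-\ell\}$ with $\{1,\dots,n\}$ as multisets of indices; since the $\epsilon_k$ are nonincreasing in $k$ and all lie in $(0,1]$, the multiset $\{1,\dots,n\}$ has "larger indices" and hence a smaller product, so $\prod_{k=1}^n \epsilon_k \leq \prod_{k=1}^\ell \epsilon_k \prod_{k=1}^{n-\ell} \epsilon_k$ for every $\ell$. (Concretely: for $\ell \leq n - \ell$, write $\prod_{k=1}^\ell \epsilon_k \prod_{k=1}^{n-\ell} \epsilon_k = \prod_{k=1}^{n-\ell}\epsilon_k \cdot \prod_{k=1}^\ell \epsilon_k \geq \prod_{k=1}^{n-\ell}\epsilon_k \cdot \prod_{k=n-\ell+1}^{n}\epsilon_k = \prod_{k=1}^n \epsilon_k$, using $\epsilon_k \geq \epsilon_{k + (n-\ell)}$ termwise for $1 \leq k \leq \ell$; the case $\ell \geq n-\ell$ is symmetric.) Hence the minimum over $\ell$ is attained at $\ell \in \{0,n\}$ and equals $\prod_{k=1}^n \epsilon_k$.

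Combining, the factor $\min_{0 \leq \ell \leq n}\prod_{k=1}^\ell \epsilon_k^{\mathrm{str}} \prod_{k=1}^{n-\ell}\delta_k^{\mathrm{str}}$ from \cref{thm:mainprobbound} is at least $\prod_{k=1}^n \epsilon_k$, and since the Poisson-type prefactor $\prod_{i=1}^n \frac{\kappa_i^{\kappa_i} e^{-\kappa_i}}{\kappa_i!}$ is identical in both statements, the claimed bound follows immediately. I do not anticipate a genuine obstacle here — the whole content is the two elementary observations above (that $\epsilon_k$ bounds both $\epsilon_k^{\mathrm{str}}$ and $\delta_k^{\mathrm{str}}$, and that the $\ell$-minimum collapses by monotonicity). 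The only place to be slightly careful is ensuring all the $\epsilon_k$ are strictly positive and at most $1$ so that the monotonicity argument for the $\ell$-minimum is valid; both are immediate from the standing hypothesis $|\EE{\mu}{A_S} - \kappa_S| < 1$ (note $\emptyset$ is allowed, giving $0 < 1$, so $\max$ is over a nonempty family and $\epsilon_k < 1$ is not even needed, only $\epsilon_k \le 1$, which holds since the max of absolute values is $\ge 0$).
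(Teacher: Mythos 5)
Your argument is correct and is essentially the paper's own proof: the paper derives \cref{cor:mainprobbound_lite} from \cref{thm:mainprobbound} by observing that the single quantity $\epsilon_k$ defined there lower bounds both $\epsilon_k$ and $\delta_k$ of the stronger theorem (since $\max_{|S|=k}$ is dominated by $\max_{|S|\le k}$ of the absolute values), and then using the monotonicity $\epsilon_1 \ge \epsilon_2 \ge \cdots \ge \epsilon_n > 0$ to collapse $\min_{0\le\ell\le n}\prod_{k=1}^\ell\epsilon_k\prod_{k=1}^{n-\ell}\epsilon_k$ to $\prod_{k=1}^n\epsilon_k$ — exactly the two observations you make. (As you also note in passing, the hypothesis $\epsilon_k \le 1$ is not actually needed for the multiset comparison; positivity plus nonincreasingness suffices.)
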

\begin{proof}
    Follows from \cref{thm:mainprobbound}; see the proof of \cref{cor:one_term_cap_bound} for more details.
\end{proof}

The remainder of this section is devoted to demonstrating how one can use the above results to improve the approximation factor for metric TSP.



%

\subsection{A Simple Application}

We recall some definitions from \cite{KKO21}. There, we have a graph $G=(V,E)$ and a strongly Rayleigh (SR) distribution $\mu:2^E \to \R_{\ge 0}$ supported on spanning trees of $G$. We let $x_e = \PP{T \sim \mu}{e \in T}$ and for  a set of edges $F$ let $x(F) = \sum_{e \in F} x_e$.  Furthermore we let $\delta(S) = \{e=\{u,v\} \mid |e \cap S|=1\}$. The guarantee on $x$ is that $x \in \subLP$,\footnote{Technically, a spanning tree plus an edge is sampled, as otherwise one cannot exactly have $x \in \subLP$, but we ignore that here.} where
\begin{equation}
\begin{aligned}
\subLP := \begin{cases}
x(\delta(S))  \ge 2 & \forall \, S\subsetneq V\\
x(\delta(v)) = 2 & \forall v \in V\\
x_{\{u,v\}}\geq  0  &  \forall \, u,v \in V
\end{cases}
\end{aligned}
\label{eq:tsplp}
\end{equation}

In the algorithm they analyze, one first samples a spanning tree $T$ from $\mu$ and then add the minimum cost matching on the odd vertices of $T$. Their work involves analyzing the expected cost of this matching over the randomness of the sampled tree $T$. Unsurprisingly, the \textit{parity} of vertices is therefore very important, as it determines which vertices are involved in the matching. 

In this section, to give some sense of the utility of \cref{thm:mainprobbound}, we show an application in a simplified setting. Namely, we show that for any two vertices $u,v$, except in the special case that $x_{\{u,v\}} \approx \frac{1}{2}$, we have $\PP{T \sim \mu}{|\delta(u) \cap T|=|\delta(v) \cap T| = 2} \ge \Omega(1)$, i.e. for any two vertices that do not share an edge of value $\frac{1}{2}$ there is a constant probability that they have even parity simultaneously. This is helpful because (under some conditions on the point $x \in \subLP$) the event that $u,v$ are even simultaneously indicates one can strictly decrease the cost of the matching proportional to the cost of the edge $e$.

We now prove $\PP{T \sim \mu}{|\delta(u) \cap T|=|\delta(v) \cap T| = 2} \ge \Omega(1)$ whenever $x_{\{u,v\}} \not\approx \frac{1}{2}$. To do this, we split into two cases: when $x_{\{u,v\}} \geq \frac{1}{2} + \epsilon$ and when $x_{\{u,v\}} \leq \frac{1}{2} - \epsilon$.

\begin{lemma}
Let $u,v$ be two vertices such that $x_{\{u,v\}} \ge \frac{1}{2}+\epsilon$ for some $\epsilon > 0$. Then, 
$$\PP{T \sim \mu}{|\delta(u) \cap T|=|\delta(v) \cap T|=2} \ge \frac{\epsilon}{2e^3}.$$
\end{lemma}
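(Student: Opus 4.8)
The plan is to reduce the two-vertex even-parity event to an instance of \cref{thm:mainprobbound} (or \cref{cor:mainprobbound_lite}) with $n=2$, by setting up the right strongly Rayleigh distribution and the right target vector $\bm\kappa$. Concretely, I would first condition or restrict the tree distribution $\mu$ so that the edge $\{u,v\}$ is either forced in or forced out; since $\mu$ is strongly Rayleigh, conditioning on the value of a single variable preserves strong Rayleigh-ness. The natural choice is to condition on $\{u,v\} \notin T$: then $|\delta(u)\cap T| = 2$ and $|\delta(v)\cap T| = 2$ become statements about the number of edges of $T$ incident to $u$ (resp.\ $v$) among the remaining edges, and the relevant edge sets $\delta(u)\setminus\{u,v\}$ and $\delta(v)\setminus\{u,v\}$ are disjoint. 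So I would let $A_1 = |(\delta(u)\setminus\{u,v\})\cap T|$ and $A_2 = |(\delta(v)\setminus\{u,v\})\cap T|$, which count elements of disjoint subsets of $E$, and take $\kappa_1 = \kappa_2 = 2$.

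The key computation is then to control the conditional expectations $\EE{\mu'}{A_1}$, $\EE{\mu'}{A_2}$, and $\EE{\mu'}{A_1+A_2}$, where $\mu'$ is $\mu$ conditioned on $\{u,v\}\notin T$, and check they are each within $1-\Omega(\epsilon)$ of $\kappa$. Before conditioning, $\EE{\mu}{|\delta(u)\cap T|} = x(\delta(u)) = 2$ from the LP constraint, and this equals $x_{\{u,v\}} + \EE{\mu}{A_1'}$ where $A_1'$ is the count over $\delta(u)\setminus\{u,v\}$; so $\EE{\mu}{A_1'} = 2 - x_{\{u,v\}} \le \tfrac{3}{2} - \epsilon$. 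The subtlety is passing from the unconditioned to the conditioned expectation: conditioning on an edge being \emph{absent} can only decrease the marginals of other edges (negative association / the strongly Rayleigh property gives negative correlation), so $\EE{\mu'}{A_1} \le \EE{\mu}{A_1'} = 2 - x_{\{u,v\}} \le \tfrac32 - \epsilon$, hence $|\EE{\mu'}{A_1} - 2| \ge \tfrac12 + \epsilon$ in the wrong direction — wait, we need an \emph{upper} bound on the distance, i.e.\ $\EE{\mu'}{A_1}$ should not be too far \emph{below} $2$ either. Here one uses that $T$ is a spanning tree, so $u$ has degree at least $1$ in $T$, giving $\EE{\mu'}{A_1}\ge 1$ (since $\{u,v\}\notin T$ forces $u$'s tree-edge to lie in $\delta(u)\setminus\{u,v\}$), so $|\EE{\mu'}{A_1}-2|\le 1$, and the $\binom{[2]}{1}$ bound actually gives $|\EE{\mu'}{A_i}-\kappa_i| \le 1 - \epsilon'$ for a suitable $\epsilon' = \Omega(\epsilon)$; similarly $\EE{\mu'}{A_1+A_2}$ is handled by combining the two $\delta$-constraints with the fact that the tree-edges at $u$ and $v$ are forced into these sets. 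Plugging the resulting $\bm\epsilon$ vector into \cref{cor:mainprobbound_lite} with $\kappa_1=\kappa_2=2$ gives $\PP{\mu'}{A_1=2,A_2=2} \ge \left(\tfrac{2^2 e^{-2}}{2!}\right)^2 \epsilon_1\epsilon_2 = \tfrac{4}{e^4}\epsilon_1\epsilon_2$, and since $\Pr[\{u,v\}\notin T] \ge x_{\{u,v\}} \ge \tfrac12$ multiplies back in (or rather, one argues the unconditioned event dominates the conditioned one times this probability), one obtains a bound of the claimed shape $\tfrac{\epsilon}{2e^3}$ after bookkeeping the constants.

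The main obstacle I expect is the careful accounting of the conditional expectations and making the constants come out to exactly $\tfrac{\epsilon}{2e^3}$: one needs both that conditioning on the absence of $\{u,v\}$ does not push $\EE{}{A_i}$ below $1$ (using the spanning-tree structure and the LP equality $x(\delta(v))=2$) and that it does not leave $\EE{}{A_i}$ or $\EE{}{A_1+A_2}$ too close to the boundary of the allowed window. A cleaner route, avoiding conditioning altogether, may be to work directly with $\mu$ and variables counting $|\delta(u)\cap T|$ and $|\delta(v)\cap T|$ but noting these are \emph{not} over disjoint edge sets (they share $\{u,v\}$); to fix disjointness one really does want to either condition on $\{u,v\}$ or split $\{u,v\}$ off as its own singleton block. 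I would therefore commit to the conditioning approach, verify the strong Rayleigh property is preserved (standard, e.g.\ via \cite{BBL09}), verify $|\EE{\mu'}{A_S}-\kappa_S|\le 1-\Omega(\epsilon)$ for all three $S\subseteq[2]$, and then quote \cref{cor:mainprobbound_lite}. The tightness claim about $\epsilon$ is not needed here — only the lower bound direction.
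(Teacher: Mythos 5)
Your proposal takes a fundamentally wrong route, and the paper's proof is both different and essential here. Given $x_{\{u,v\}} \ge \tfrac12+\epsilon$, the edge $e=\{u,v\}$ is more likely to be \emph{in} the tree than not, so the high-probability way for both $u,v$ to have degree $2$ is the configuration ``$e\in T$, plus exactly one other edge at $u$ and one at $v$.'' The paper captures this directly — no conditioning — with three disjoint blocks $\{e\}$, $\delta(u)\smallsetminus\{e\}$, $\delta(v)\smallsetminus\{e\}$, random variables $A_1=\mathbb{I}[e\in T]$, $A_2$, $A_3$, and target $\bm\kappa=(1,1,1)$, then applies \cref{thm:mainprobbound} once. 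You instead condition on $e\notin T$ and target $\bm\kappa=(2,2)$; that is the right move for the \emph{complementary} hypothesis $x_e\le\tfrac12-\epsilon$ (see the very next lemma in the paper, which uses $\bm\kappa=(0,2,2)$), but here it chases the low-probability branch.

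Two things in your argument actually break. First, your claim ``$\Pr[\{u,v\}\notin T]\ge x_{\{u,v\}}\ge\tfrac12$'' is reversed: $\Pr[e\notin T]=1-x_e\le\tfrac12-\epsilon$, and this factor tends to $0$ as $x_e\to1$, so the final bound would degrade as $x_e$ grows rather than improve, contradicting the shape $\epsilon/(2e^3)$ you need. Second, the hypothesis of \cref{cor:mainprobbound_lite} can fail for $S=\{1,2\}$: negative association only gives $\EE{\mu'}{A_1+A_2}\ge 2(2-x_e)=4-2x_e$, which is strictly less than $3$ whenever $x_e>\tfrac12$, so one cannot rule out $\bigl|\EE{\mu'}{A_1+A_2}-4\bigr|\ge1$; the spanning-tree degree bound $A_i\ge1$ only gives $\EE{\mu'}{A_1+A_2}\ge2$, which is not enough. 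There is no fix within the ``condition on $e\notin T$, use $\bm\kappa=(2,2)$'' template — you need to switch to the three-variable, $\bm\kappa=(1,1,1)$ setup, at which point the expectations $\E[A_1]=x_e$, $\E[A_2]=\E[A_3]=2-x_e$ are exact, $\delta_k,\epsilon_k$ can be read off directly (in particular $\epsilon_2=2x_e-1\ge2\epsilon$), and the worst $\ell$ in \cref{thm:mainprobbound} gives $e^{-3}x_e^2(2x_e-1)\ge\epsilon/(2e^3)$.
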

\begin{proof}
    Let $e=(u,v)$. For $T \sim \mu$, let $A_1=\mathbb{I}[e \in T]$, $A_2 = |(\delta(u) \smallsetminus \{e\}) \cap T|$, $A_3 = |(\delta(v) \smallsetminus \{e\}) \cap T|$. We are now interested in the event $A_i = \kappa_i, \forall i$ for the vector $\kappa=(1,1,1)$ as this implies $|\delta(u) \cap T|=|\delta(v) \cap T|=2$. We have $\E[A_1] = x_e$, $\E[A_2]=\E[A_3]=2-x_e$. Therefore, to apply \cref{thm:mainprobbound} we can set:
    \begin{align*}
        \delta_1 = x_e, \delta_2 = 1, \delta_3 = 2-x_e, \qquad \epsilon_1 = x_e, \epsilon_2 = 2x_e-1, \epsilon_3 = x_e 
    \end{align*}
In this case the worst case is using all of the $\epsilon$ terms in the bound, giving $e^{-3}x_e^2(2x_e-1) \ge \frac{\epsilon}{2e^3}$ as desired.
\end{proof}

\begin{lemma}
Let $u,v$ be two vertices such that $x_{\{u,v\}} \le \frac{1}{2}-\epsilon$ for some $\epsilon > 0$. Then, 
$$\PP{T \sim \mu}{|\delta(u) \cap T|=|\delta(v) \cap T|=2} \ge \frac{2\epsilon}{e^4}.$$
\end{lemma}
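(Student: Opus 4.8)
The plan is to mirror the proof of the previous lemma (the $x_{\{u,v\}} \geq \frac12 + \epsilon$ case), but now with $x_e := x_{\{u,v\}} \leq \frac12 - \epsilon$. Let $e = (u,v)$, and for $T \sim \mu$ define $A_1 = \mathbb{I}[e \in T]$, $A_2 = |(\delta(u) \smallsetminus \{e\}) \cap T|$, $A_3 = |(\delta(v) \smallsetminus \{e\}) \cap T|$, with target vector $\bm\kappa = (1,1,1)$. As before, $A_1 = A_2 = A_3 = 1$ forces $|\delta(u) \cap T| = |\delta(v) \cap T| = 2$, so it suffices to lower bound $\PP{\mu}{A_1 = 1, A_2 = 1, A_3 = 1}$ via \cref{thm:mainprobbound}. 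We have $\E[A_1] = x_e$ and $\E[A_2] = \E[A_3] = 2 - x_e$ (using $x(\delta(u)) = x(\delta(v)) = 2$).

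**Computing the $\delta_k, \epsilon_k$ and extracting the bound.** I would now compute $\EE{\mu}{A_S} - \kappa_S$ over all $S \subseteq [3]$. The singletons give deviations $x_e - 1$ and $1 - x_e$ (twice); the pairs give $x_e - x_e = 0$ (for $S = \{1,2\}$ and $\{1,3\}$) and $2 - 2x_e - 2 = -2x_e$ (for $S = \{2,3\}$); the full set gives $4 - 2x_e - 3 = 1 - 2x_e$. Since $x_e \leq \frac12 - \epsilon$, all these deviations have absolute value $< 1$, so \cref{thm:mainprobbound} applies. Following the definitions $\delta_k = 1 + \min_{|S|=k}(\EE{\mu}{A_S} - \kappa_S)$ and $\epsilon_k = 1 - \max_{|S|=k}(\EE{\mu}{A_S} - \kappa_S)$, I get $\delta_1 = x_e$, $\epsilon_1 = x_e$ (from the singletons); $\delta_2 = 1 - 2x_e$, $\epsilon_2 = 1$ (from the pairs, where $\min = -2x_e$ and $\max = 0$); and $\delta_3 = 2 - 2x_e$, $\epsilon_3 = 2x_e$ (from the full set). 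The bound in \cref{thm:mainprobbound} is $\prod_{i=1}^3 \frac{\kappa_i^{\kappa_i}e^{-\kappa_i}}{\kappa_i!} \cdot \min_{0 \leq \ell \leq 3}\prod_{k=1}^\ell \epsilon_k \prod_{k=1}^{3-\ell}\delta_k = e^{-3} \cdot \min_{0\le\ell\le 3}(\cdots)$. I would evaluate the four products: $\ell=0$ gives $\delta_1\delta_2\delta_3 = x_e(1-2x_e)(2-2x_e)$; $\ell=1$ gives $\epsilon_1\delta_1\delta_2 = x_e^2(1-2x_e)$; $\ell=2$ gives $\epsilon_1\epsilon_2\delta_1 = x_e^2$; $\ell=3$ gives $\epsilon_1\epsilon_2\epsilon_3 = 2x_e^3$. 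For $x_e$ near $0$ the dominant (smallest) term is the one of highest degree in $x_e$, but one must check carefully which is smallest over the whole range $0 < x_e \le \frac12 - \epsilon$; I expect $\ell = 0$, i.e. $x_e(1-2x_e)(2-2x_e)$, since it carries a single power of the small quantity $x_e$ — wait, that is not small here, $x_e$ can be close to $\frac12$. Actually the relevant small quantity is $1 - 2x_e \geq 2\epsilon$, which appears in the $\ell = 0$ and $\ell = 1$ products. So the minimum over $\ell$ is $\min\{x_e(1-2x_e)(2-2x_e),\, x_e^2(1-2x_e),\, x_e^2,\, 2x_e^3\}$, and the hard part is showing this min is $\geq \frac{2\epsilon}{e}$ (so that multiplying by $e^{-3}$ gives the claimed $\frac{2\epsilon}{e^4}$).

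**The main obstacle.** The one genuine calculation is to verify $\min\{x_e(1-2x_e)(2-2x_e),\ x_e^2(1-2x_e),\ x_e^2,\ 2x_e^3\} \geq \frac{2\epsilon}{e}$ for all $x_e \in (0, \tfrac12 - \epsilon]$ — or, more likely, to identify that the correct statement to match the claimed constant requires a slightly different bookkeeping, e.g. noting $x_e^2 \geq (\tfrac12-\epsilon)\cdot x_e$ is false for small $x_e$, so the binding terms are those containing the factor $(1-2x_e)$. I would argue: on $(0,\tfrac12-\epsilon]$ we have $1 - 2x_e \geq 2\epsilon$; the term $x_e^2(1-2x_e)$ and $x_e(1-2x_e)(2-2x_e)$ both could be as small as $\Theta(\epsilon \cdot x_e)$ or $\Theta(\epsilon)$ depending on $x_e$, which is problematic when $x_e \to 0$. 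This suggests the lemma as stated may implicitly assume $x_e$ bounded away from $0$ (indeed, in the TSP context $x_e > 0$ with some lower bound, or one uses $x(\delta(S)) \geq 2$ type constraints), or that a different random-variable decomposition is used when $x_e$ is tiny. I would resolve this by either (a) invoking such a lower bound on $x_e$ available in the application, or (b) handling $x_e$ small separately — when $x_e$ is small, $A_1 = 0$ is overwhelmingly likely and one instead targets $\bm\kappa' = (0,2,2)$ or reduces to a statement about a single vertex. The cleanest writeup will follow whichever of these the authors intend; absent that, I would present the computation of the $\delta_k,\epsilon_k$ exactly as above and then select the minimizing $\ell$, concluding $\PP{\mu}{|\delta(u)\cap T| = |\delta(v)\cap T| = 2} \geq e^{-3} \cdot x_e(1-2x_e)(2-2x_e) \geq \frac{2\epsilon}{e^4}$ under the operative assumption on $x_e$, which is the analogue of the factor-of-$2$-worse constant appearing in the previous lemma.
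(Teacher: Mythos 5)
The right choice of target vector here is $\bm\kappa = (0,2,2)$, not $(1,1,1)$; this is the crux of the lemma and is what the paper does. With your choice $\bm\kappa = (1,1,1)$, the hypothesis of \cref{thm:mainprobbound} is not even satisfied: for $S = \{2,3\}$ one has $\EE{\mu}{A_2 + A_3} - (\kappa_2 + \kappa_3) = (2-x_e) + (2-x_e) - 2 = 2 - 2x_e \geq 1 + 2\epsilon > 1$, so the required bound $|\EE{\mu}{A_S} - \kappa_S| < 1$ fails. This is masked in your writeup by two arithmetic slips --- you record the $S=\{2,3\}$ deviation as $-2x_e$ (it is $2 - 2x_e$) and the $S = \{1,2,3\}$ deviation as $1 - 2x_e$ (it is $1 - x_e$) --- and those errors make the ensuing $\delta_k, \epsilon_k$ values and the four products meaningless, which is why you then found yourself stuck trying to rescue the bound as $x_e \to 0$. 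There is no rescue: the $(1,1,1)$ event forces $e \in T$, which can have arbitrarily small probability when $x_e$ is small, so no amount of bookkeeping salvages a uniform-in-$x_e$ lower bound from that decomposition.

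You do remark, near the end, that one should ``instead target $\bm\kappa' = (0,2,2)$'' for small $x_e$, which is exactly the intended proof (and not just a patch for the small-$x_e$ regime: it is the proof for the entire range $x_e \leq \tfrac12 - \epsilon$). But you do not carry it out. For the record, with $\bm\kappa = (0,2,2)$ the singleton/pair/triple deviations are $\{x_e, -x_e\}$, $\{0, -2x_e\}$, and $\{-x_e\}$ respectively, giving $\delta_1 = 1-x_e$, $\delta_2 = 1-2x_e$, $\delta_3 = 1-x_e$, $\epsilon_1 = 1-x_e$, $\epsilon_2 = 1$, $\epsilon_3 = 1+x_e$; the binomial prefactor is $\frac{0^0 e^0}{0!}\cdot\left(\frac{4e^{-2}}{2}\right)^2 = 4e^{-4}$; the minimum over $\ell$ is attained at the all-$\delta$ product $(1-x_e)^2(1-2x_e)$; and since $1-x_e \geq \tfrac12$ and $1-2x_e \geq 2\epsilon$, the final bound is $4e^{-4}(1-x_e)^2(1-2x_e) \geq \tfrac{2\epsilon}{e^4}$. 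You should redo the write-up with this $\bm\kappa$ from the start rather than presenting $(1,1,1)$ as the main line and $(0,2,2)$ as a fallback.
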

\begin{proof}
    As above, $e=(u,v)$, and for $T \sim \mu$, let $A_1=\mathbb{I}[e \in T]$, $A_2 = |(\delta(u) \smallsetminus \{e\}) \cap T|$, $A_3 = |(\delta(v) \smallsetminus \{e\}) \cap T|$. We are now interested in the event $A_i = \kappa_i, \forall i$ for the vector $\kappa=(0,2,2)$ as this implies $|\delta(u) \cap T|=|\delta(v) \cap T|=2$. We have $\E[A_1] = x_e$, $\E[A_2]=\E[A_3]=2-x_e$. Therefore, to apply \cref{thm:mainprobbound} we can set:
    \begin{align*}
        \delta_1 = 1-x_e, \delta_2 = 1-2x_e, \delta_3 = 1-x_e, \qquad \epsilon_1 = 1-x_e, \epsilon_2 = 1, \epsilon_3 = 1+x_e 
    \end{align*}
In this case the worst case is using all of the $\delta$ terms in the bound, giving $4e^{-4}(1-x_e)^2(1-2x_e) \ge \frac{2\epsilon}{e^4}$.
\end{proof}

We leave further background about TSP and the proofs of our improved probabilistic statements to \cref{sec:app}, as understanding their importance requires some knowledge of the (highly technical) proof in \cite{KKO21}. However, in the next section we summarize the new probabilistic bounds we get and their consequences. 

\subsection{Summary of Probabilistic Bounds and New Approximation Factor} \label{sec:summary_tsp_bounds}

The current bound on the performance of the max entropy algorithm is $\frac{3}{2}-4.11 \cdot 10^{-36}$. This is primarily governed by a constant $p$ that is determined by the minimum probability over a number of events. In \cite{KKO21}, $p$ was equal to $2 \cdot 10^{-10}$, and these events are described by the following statements in \cite{KKO21}:
\begin{enumerate}
\item Corollary 5.9, which gives a bound of $1.5 \cdot 10^{-9}$. We do not modify this bound, although we note that Lemma 5.7 can be slightly improved which would lead to a small improvement here. 
\item Lemma 5.21, which gives a bound of $0.005 \epsilon_{1/2}^2 = 2 \cdot 10^{-10}$. We improve this to $0.039 \epsilon_{1/2}^2 = 1.56 \cdot 10^{-9}$. 
\item Lemma 5.22, which gives a bound of $0.006 \epsilon_{1/2}^2 = 2.4 \cdot 10^{-10}$. We improve this to $0.038 \epsilon_{1/2}^2 = 1.52 \cdot 10^{-9}$. 
\item Lemma 5.23, which gives a bound of $0.005 \cdot \epsilon_{1/2} = 2 \cdot 10^{-10}$. We observe here that arguments already in \cite{KKO21} can be used to give $0.0498 \epsilon_{1/2}^2 \ge 1.9 \cdot 10^{-9}$. 
\item Lemma 5.24, which gives a bound of $0.02\epsilon_{1/2}^2 = 8 \cdot 10^{-10}$. We improve this to $0.0485\epsilon_{1/2}^2 \ge  1.9 \cdot 10^{-9}$.
\item Lemma 5.27, which gives a bound of 0.01. This lemma actually uses that the threshold $p$ is \textit{small}, and therefore this bound decreases slightly upon raising $p$. However, as it is quite far from being the bottleneck in these bounds, we omit the proof that the probability remains above $1.5 \cdot 10^{-9}$.
\end{enumerate}
We prove these bounds formally using our new strongly Rayleigh probability bounds in \cref{sec:cap-prob-improvements} (which directly uses our new results) and \cref{app:furtherProb} (which mixes in more ad-hoc methods).

Therefore, we may increase $p$ to the minimum of all these probabilities, $1.5 \cdot 10^{-9}$. In \cref{sec:app}, we observe that using statements from \cite{KKO21,KKO22}, the following holds:
\begin{restatable}{lemma}{TSPblackbox}
    Let $p$ be a lower bound on the probabilities guaranteed by (1) - (6) for $\epsilon_{1/2} \le 0.0002$, and suppose $p \le 10^{-4}$. Then given $x \in \subLP$, the max entropy algorithm returns a solution of expected cost at most $(\frac{3}{2}-9.7p^2 \cdot 10^{-17}) \cdot c(x)$. 
\end{restatable}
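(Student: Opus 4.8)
The plan is to use the analysis of the max entropy algorithm from \cite{KKO21}, together with its refinement in \cite{KKO22}, essentially as a black box, and to feed into it the improved probabilistic guarantees recorded in items (1)--(6) above. Recall the algorithm: sample a spanning tree $T$ from the max entropy distribution $\mu$ with marginals $x \in \subLP$, and output $T$ together with a minimum cost matching $M$ on the odd-degree vertices of $T$. Since $c(T) \le c(x)$ deterministically, the whole task reduces to controlling $\EE{T\sim\mu}{c(M)}$: the classical bound $\E[c(M)] \le \tfrac12 c(x)$ yields approximation factor $\tfrac32$, and \cite{KKO21} improves on this by exhibiting, in expectation, a strictly cheaper fractional parity-correction witness. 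The savings over $\tfrac12 c(x)$ is assembled from contributions localized to edges and to nodes of the hierarchy of near-minimum cuts, and each such contribution is gated by the probability of a ``good parity event'' (two endpoints simultaneously even, a cut being even, and so on — exactly of the flavor of the two lemmas proved just above via \cref{thm:mainprobbound}) multiplied by a slack quantity. Items (1)--(6) are precisely lower bounds on the probabilities of these good events.

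First I would dispose of the complementary regime: the hypothesis restricts attention to $\epsilon_{1/2} \le 0.0002$, and the case $\epsilon_{1/2} > 0.0002$ (where the near-half edges already carry enough slack on their own) is handled by a separate argument and lies outside this lemma. Within the regime $\epsilon_{1/2} \le 0.0002$, I would retrace the accounting of $\E[c(M)] - \tfrac12 c(x)$ in \cite{KKO21,KKO22} and substitute the uniform lower bound $p$ for each of the six event probabilities used there, checking that the resulting chain of inequalities still telescopes correctly; this is where the hypotheses $p \le 10^{-4}$ and $\epsilon_{1/2} \le 0.0002$ are used, since some intermediate steps of \cite{KKO21} (e.g.\ the one behind item (6)) rely on the threshold being small. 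The net savings then has the form $g(p,\epsilon_{1/2}) \cdot c(x)$, where $g$ is, up to universal constants, $p$ times a slack term of order $\epsilon_{1/2}^2$; since in this regime the bottleneck among (1)--(6) is one of the $\Theta(\epsilon_{1/2}^2)$ bounds, the exploited slack and the limiting probability are both controlled by $\epsilon_{1/2}^2$, and $g(p,\epsilon_{1/2})$ comes out of order $p^2$. Carrying the explicit constants from the improved bounds (1)--(6) through the calculation then yields $g(p,\epsilon_{1/2}) \ge 9.7\, p^2 \cdot 10^{-17}$, giving the claimed expected cost $\bigl(\tfrac32 - 9.7\, p^2 \cdot 10^{-17}\bigr) c(x)$.

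The main obstacle is bookkeeping rather than conceptual: the analysis in \cite{KKO21} is long and the savings is spread across many cases, so one must verify that \emph{every} place where one of the six probabilities was previously invoked can be uniformly lower bounded by $p$, and that replacing the old (smaller) constants with the new (larger) ones does not invalidate any ``$p$ is small'' assumption used elsewhere in their argument. A secondary subtlety is the interface between the two $\epsilon_{1/2}$ regimes at the threshold $0.0002$: the constants must be chosen so that the improvement is uniformly bounded below across the split, so that \cref{thm:improvedTSPconstant} can quote a single constant. Once these checks are in place, the lemma reduces to arithmetic with the six improved bounds and the known expressions from \cite{KKO21,KKO22}.
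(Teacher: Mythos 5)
Your proposal has the right high-level shape: you correctly identify that the lemma is obtained by feeding the improved probabilities into the existing machinery of \cite{KKO21} and the integrality-gap reduction in \cite{KKO22}, and you correctly flag that the hypotheses $p \le 10^{-4}$ and $\epsilon_{1/2} \le 0.0002$ are needed to keep intermediate steps of \cite{KKO21} valid. However, your explanation of where the $p^2$ comes from is wrong, and that is the only quantitative content of the lemma. You argue that the savings has the form $p \times (\text{slack of order }\epsilon_{1/2}^2)$ and that since the bottleneck probability is $\Theta(\epsilon_{1/2}^2)$ you may substitute $\epsilon_{1/2}^2 \approx p$ to get $p^2$. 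But the lemma is a \emph{parameterized} statement about an arbitrary lower bound $p$; it never assumes $p = \Theta(\epsilon_{1/2}^2)$, and the stated conclusion $9.7\,p^2\cdot 10^{-17}$ makes no reference to $\epsilon_{1/2}$. Under your reasoning the savings would scale like $p\cdot \epsilon_{1/2}^2$, which for a generic admissible $p$ is not $\Omega(p^2)$.

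The actual mechanism is internal to \cite{KKO21,KKO22} and does not involve any comparison between $p$ and $\epsilon_{1/2}$. In the proof of Theorem 4.33 of \cite{KKO21}, the payment constant is set as $\epsilon_P = 1.56 \cdot 10^{-6}p$, where $p$ is the minimum over Corollary 5.9, Lemma 5.25, and Lemma 5.28 (the latter two being aggregations of Lemmas 5.21--5.24 and 5.27, i.e.\ exactly items (1)--(6)). One then plugs this $\epsilon_P$ into Theorem 6.1 of \cite{KKO22} with $\eta = \min\{10^{-12}, \epsilon_P/750\}$ and $\beta = \eta/(4+2\eta)$, obtaining an approximation ratio of $\frac{3}{2} - \frac{\epsilon_P}{6}\beta + \frac{\epsilon_P\eta}{100} \ge \frac{3}{2} - \frac{\epsilon_P^2}{25000}$; the constraint $p\le 10^{-4}$ ensures $\epsilon_P/750 < 10^{-12}$ so that $\eta = \epsilon_P/750$. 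The quadratic dependence on $p$ thus arises because $\epsilon_P$ (linear in $p$) enters the \cite{KKO22} formula twice — once directly and once through $\eta$ — not because of any relationship between the probability and the slack. Plugging in $\epsilon_P = 1.56 \cdot 10^{-6}p$ yields $\epsilon_P^2/25000 \ge 9.7\,p^2\cdot 10^{-17}$. Without identifying this chain, ``carrying the constants through'' does not constitute a proof, and the verbal argument you give would actively mislead a reader about what is being proved.
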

 As we improve the bounds on $p$ to $1.5 \cdot 10^{-9}$, an immediate corollary is the following:
\begin{corollary}
    The max entropy algorithm is a $\frac{3}{2}-2.18 \cdot 10^{-34}$ approximation algorithm for metric TSP.
\end{corollary}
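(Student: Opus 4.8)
The final statement reduces, after a short computation, to collecting the six probability bounds of \cref{sec:summary_tsp_bounds} and feeding them into the black-box reduction lemma stated just above. So the plan is threefold: (i) verify that each of the quantities in items (1)--(6) of \cref{sec:summary_tsp_bounds} is at least $p := 1.5 \cdot 10^{-9}$; (ii) check that $p \le 10^{-4}$ and apply that lemma; and (iii) simplify the resulting approximation factor $\frac{3}{2} - 9.7\, p^2 \cdot 10^{-17}$.

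For step (i), the bounds split into those that follow ``purely from expectation information'' and those that do not. The former — items (2)--(5), i.e.\ the analogues of Lemmas 5.21--5.24 of \cite{KKO21} — are re-proved in \cref{sec:cap-prob-improvements} and \cref{app:furtherProb} along the following lines: for each relevant parity/edge-count event one introduces random variables $A_1,\dots,A_n$ counting edges of the sampled spanning tree inside suitable disjoint edge sets, observes that the constraints defining $\subLP$ force $|\EE{\mu}{A_S} - \kappa_S| < 1$ for all $S \subseteq [n]$, and then applies \cref{thm:mainprobbound} (or the more convenient \cref{cor:mainprobbound_lite}) to the probability generating polynomial of $\mu$. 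In a couple of cases this already improves on the ad hoc estimate of \cite{KKO21}; in the two cases that remain bottlenecks one additionally refines the \cite{KKO21} argument, using \cref{thm:main_imp_prob_bound} to peel off everything except a bounded-size sub-instance and then estimating that sub-instance by hand. The net effect is the improved constants $0.039\,\epsilon_{1/2}^2$, $0.038\,\epsilon_{1/2}^2$, $0.0498\,\epsilon_{1/2}^2$, $0.0485\,\epsilon_{1/2}^2$ recorded above, each of which is at least $1.5 \cdot 10^{-9}$ in the range of $\epsilon_{1/2}$ relevant to the black-box lemma. For the remaining two items there is nothing to re-prove: item (1) is Corollary 5.9 of \cite{KKO21}, whose bound $1.5 \cdot 10^{-9}$ we leave untouched, and item (6) is Lemma 5.27 of \cite{KKO21}, whose guarantee drops only slightly when the threshold $p$ is raised and still stays above $1.5 \cdot 10^{-9}$.

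For steps (ii) and (iii): taking $p = 1.5 \cdot 10^{-9}$, which is then a valid lower bound for (1)--(6) and clearly satisfies $p \le 10^{-4}$, the black-box lemma gives that on any instance the max entropy algorithm returns a solution of expected cost at most $\left(\frac{3}{2} - 9.7\, p^2 \cdot 10^{-17}\right) c(x) \le \left(\frac{3}{2} - 9.7\, p^2 \cdot 10^{-17}\right)\mathrm{OPT}$. Since $9.7 \cdot (1.5 \cdot 10^{-9})^2 \cdot 10^{-17} = 9.7 \cdot 2.25 \cdot 10^{-35} = 2.1825 \cdot 10^{-34} \ge 2.18 \cdot 10^{-34}$, this is at most $\left(\frac{3}{2} - 2.18 \cdot 10^{-34}\right)\mathrm{OPT}$, which is the claimed approximation guarantee.

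The main obstacle is step (i), and within it the items (2)--(5): merely substituting into \cref{thm:mainprobbound} is mechanical but is not by itself enough for the two bottleneck lemmas. The delicate part is the hybrid argument for those two — one must use the new simply-exponential bound to reduce to a residual sub-instance of bounded size without losing too much, and then analyze that residual instance sharply enough that the final constant clears $1.5 \cdot 10^{-9}$. Pushing all four constants uniformly up to (just past) $1.5 \cdot 10^{-9}$, rather than to some smaller fixed constant, is precisely what makes the clean final bound $\frac{3}{2} - 2.18 \cdot 10^{-34}$ come out.
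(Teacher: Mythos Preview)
Your proposal is correct and follows essentially the same approach as the paper: the corollary is obtained by taking $p = 1.5 \cdot 10^{-9}$ (the minimum of the six improved probability bounds listed in \cref{sec:summary_tsp_bounds}) and plugging it into the black-box reduction lemma, yielding $9.7 \cdot (1.5 \cdot 10^{-9})^2 \cdot 10^{-17} \ge 2.18 \cdot 10^{-34}$. One small expository wrinkle: you initially classify items (2)--(5) as following ``purely from expectation information,'' but as you yourself note later (and as the paper emphasizes), items (4) and (5) (Lemmas 5.23 and 5.24) do \emph{not} follow from expectation information alone and require the hybrid/ad hoc arguments of \cref{app:furtherProb}; also, $\epsilon_{1/2} = 0.0002$ is a fixed constant here rather than a range.
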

Using \cite{KKO23} this guarantee can be made deterministic, as we do not require any modifications to the algorithm.

In \cref{sec:app}, we also observe a lower bound on Lemma 5.21 for strongly Rayleigh distributions of $\Omega(\epsilon_{1/2}^2)$. The fact that $\epsilon_{1/2} \le 0.0002$ is used in many places in \cite{KKO21} and thus decreasing it may require more effort. Thus without modifying other parts of the argument, it may not be possible to improve the bound below $1.5 \cdot 10^{-31}$. 

In the rest of the body of the paper we prove our main capacity bound.

\section{Proofs of the Main Capacity Bounds} \label{sec:main_proofs}

In this section we prove the strongest forms of the main capacity bounds, which give \cref{thm:main_nh} and \cref{cor:main_nh_coeff} as corollaries. For this section, we utilize the following notation.

\begin{definition}
    For $n,d \in \N$ and $\bm\alpha \in \R_{\geq 0}^n$, we define the following:
    \begin{enumerate}
        \item $\NHMat_n^d(\bm\alpha)$ is the set of all $\R_{\geq 0}$-valued $d \times (n+1)$ matrices with row sums all equal to 1 and column sums equal to $\alpha_1,\ldots,\alpha_n,d-\|\bm\alpha\|_1$.
        \item $\NHProd_n^d(\bm\alpha)$ is the set of all polynomials of the form
        \[
            p(x) = \prod_{i=1}^d \left(a_{i,n+1} + \sum_{j=1}^n a_{i,j} x_j\right),
        \]
        where $A \in \NHMat_n^d(\bm\alpha)$. In this case, we call $p$ the \emph{polynomial associated to $A$}. Note that $p(\bm{1}) = 1$ and $\nabla p(\bm{1}) = \bm\alpha$ for all such polynomials.
        \item $\NHStab_n^d(\bm\alpha)$ is the set of all real stable polynomials in $\R_{\geq 0}[x_1,\ldots,x_n]$ of degree at most $d$ for which $p(\bm{1}) = 1$ and $\nabla p(\bm{1}) = \bm\alpha$. (Recall that a polynomial is stable if it is never zero when all inputs are in the open complex upper half-plane.)
    \end{enumerate}
\end{definition}

We also define the following for $n \in \N$, $\bm\alpha \in \R_{\geq 0}^n$, and $\bm\kappa \in \Z_{\geq 0}^n$:
\[
    L^{\NHProd}_n(\bm\alpha;\bm\kappa) := \min_{d \in \N} \min_{p \in \NHProd_n^d(\bm\alpha)} \cpc_{\bm\kappa}(p).
\]
We now follow the steps of the proof given in \cref{sec:tech_overview}.

\subsection{Productization for Non-homogeneous Stable Polynomials}

We first show how we can reduce the problem of bounding the capacity to products of affine linear forms. We recall the main productization result from \cite{GL21}, which gives the non-homogeneous productization result as an immediate corollary.

\begin{theorem}[Thm. 6.2, \cite{GL21}] \label{thm:hstab-prod}
    Fix $n,d \in \N$, $\bm{u},\bm\alpha \in \R_{\geq 0}^n$, and $p \in \R_{\geq 0}[x_1,\ldots,x_n]$ of homogeneous degree $d$, such that $p(\bm{1}) = 1$ and $\nabla p(\bm{1}) = \bm\alpha$. There exists an $\R_{\geq 0}$-valued $d \times n$ matrix $A$ such that the rows sums of $A$ are all equal to $\bm{1}$, the column sums of $A$ are given by $\bm\alpha$, and $p(\bm{u}) = \prod_{i=1}^d (A\bm{u})_i$.
\end{theorem}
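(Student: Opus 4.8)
The plan combines two ingredients: a restriction of $p$ to a well-chosen $2$-plane, which collapses the degree-$d$ real stable polynomial into a binary form that factors completely, together with a connectedness/intermediate-value argument inside the transportation polytope of candidate matrices. First, some harmless reductions: we may assume $p \not\equiv 0$, and by deleting any variable on which $p$ does not depend (equivalently, any coordinate $j$ with $\alpha_j = 0$) and re-padding the resulting matrix with a zero column at the end, we may assume $\bm\alpha > \bm0$. Since the set of $\R_{\geq 0}$-valued $d \times n$ matrices with row sums $\bm1$ and column sums $\bm\alpha$ is compact and $(A,\bm u)\mapsto\prod_{i=1}^d(A\bm u)_i$ is continuous, it suffices to prove the statement for $\bm u$ in the open positive orthant, the boundary case following by a limiting argument.

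Now restrict $p$ to the plane through $\bm1$ and $\bm u$: set $g(s,t) := p(s\bm1 + t\bm u)$, a real stable homogeneous binary form of degree $d$ with $g(1,0) = p(\bm1) = 1$. Every real stable binary form factors into real linear forms, so, using $g(1,0)=1$ to normalize, we may write $g(s,t) = \prod_{i=1}^d(s + \delta_i t)$ with $\delta_i \geq 0$. Reading off the $t^d$ and $s^{d-1}t$ coefficients gives $\prod_{i=1}^d\delta_i = g(0,1) = p(\bm u)$ and $\sum_{i=1}^d\delta_i = \langle\nabla p(\bm1),\bm u\rangle = \langle\bm\alpha,\bm u\rangle$. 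Moreover each $\delta_i$ lies in $[\min_j u_j,\ \max_j u_j]$: because $p$ has nonnegative coefficients it is strictly positive on the open positive orthant and of sign $(-1)^d$ on the open negative orthant, so the value $g(1,-\lambda) = p(\bm1 - \lambda\bm u) = \prod_i(1-\lambda\delta_i)$ is nonzero whenever $\lambda \notin [1/\max_j u_j,\ 1/\min_j u_j]$ (then $\bm1-\lambda\bm u$ has all coordinates of one sign); since $p(\bm u)>0$ forces all $\delta_i \neq 0$, the zeros $1/\delta_i$ lie in that interval, which is the claimed bound.

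It remains to realize these data honestly. Let $\mathcal P$ be the (nonempty, convex, hence connected) polytope of $\R_{\geq0}$-valued $d\times n$ matrices with row sums $\bm1$ and column sums $\bm\alpha$; we want $A \in \mathcal P$ with $F(A) := \prod_{i=1}^d(A\bm u)_i$ equal to $p(\bm u)$. As $F$ is continuous, $F(\mathcal P)$ is an interval, so it suffices to exhibit matrices in $\mathcal P$ whose $F$-values lie on either side of $p(\bm u)$ and then apply the intermediate value theorem along a segment in $\mathcal P$. The upper side is immediate: the rank-one matrix $\tfrac1d\bm1\bm\alpha^\top$ lies in $\mathcal P$ and has $F$-value $\big(\tfrac1d\langle\bm\alpha,\bm u\rangle\big)^d = \big(\tfrac1d\sum_i\delta_i\big)^d \geq \prod_i\delta_i = p(\bm u)$, by AM--GM on the nonnegative numbers $\delta_i$.

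The lower side — finding $A \in \mathcal P$ with $F(A) \leq p(\bm u)$, equivalently showing $p(\bm u) \geq \min_{A\in\mathcal P}F(A)$ — is the crux, and I expect it to be the main obstacle; it is closely related to a capacity lower bound for products of affine linear forms. The natural approach is induction on $d$ (the base case $d=1$ being trivial, with $A = \bm\alpha^\top$), using the factorization $g = \prod_i(s+\delta_it)$ to peel off one linear form: pick $\bm\beta \in \R_{\geq0}^n$ with $\langle\bm\beta,\bm1\rangle = 1$, $\bm\beta \leq \bm\alpha$, and $\langle\bm\beta,\bm u\rangle = \delta_{i_0}$ for a suitably chosen root $\delta_{i_0}$ (possible since $\delta_{i_0}$ is a convex combination of the $u_j$), use $\sum_j\beta_j x_j$ as the bottom row, and reduce to realizing the value $\prod_{i\neq i_0}\delta_i$ by a $(d-1)\times n$ matrix with row sums $\bm1$ and column sums $\bm\alpha - \bm\beta$. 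The AM--GM bound again handles the upper side of the reduced problem automatically; the difficulty lies in choosing $\delta_{i_0}$ and $\bm\beta$ so that the reduced target value stays above the corresponding minimum, i.e. so that the peeling does not destroy feasibility. To control this one should use that the roots $\delta_i$ are constrained precisely by the variables $p$ genuinely depends on, and — in the spirit of the later sections of this paper — that one may further take $A$ to be an extreme point of $\mathcal P$, whose support is a forest with only linearly many nonzero entries, thereby enabling an induction on the leaf vertices of that forest. The bivariate factorization above is exactly what supplies the numerical data $(\bm\delta)$ feeding such an induction.
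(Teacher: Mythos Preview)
This theorem is quoted from \cite{GL21} and is not proved in the present paper, so there is no in-paper proof to compare against. Two remarks on the statement itself: as printed here it omits the hypothesis that $p$ be real stable, without which the conclusion is false (e.g.\ $p=\tfrac12(x_1^2+x_2^2)$, $\bm u=(2,0)$), and you correctly restore that hypothesis in your argument.

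Your bivariate-restriction setup is the right starting point and is carried out correctly: $g(s,t)=p(s\bm1+t\bm u)$ is real stable with nonnegative coefficients, so it factors as $\prod_i(s+\delta_i t)$ with $\prod_i\delta_i=p(\bm u)$, $\sum_i\delta_i=\langle\bm\alpha,\bm u\rangle$, and $\delta_i\in[\min_ju_j,\max_ju_j]$. The AM--GM step for the upper side is also fine.

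The genuine gap is exactly where you flag it, and your proposed repair does not close it. In the peeling induction you need $\bm\beta\in\R_{\ge0}^n$ with $\langle\bm\beta,\bm1\rangle=1$, $\langle\bm\beta,\bm u\rangle=\delta_{i_0}$, \emph{and} $\bm\beta\le\bm\alpha$; the interval bound $\delta_{i_0}\in[\min_ju_j,\max_ju_j]$ alone does not guarantee this. For instance if $\delta_{i_0}=\min_ju_j=u_1$ one is forced to $\bm\beta=\bm e_1$, which fails whenever $\alpha_1<1$. Real stability does rule out that particular obstruction (in that example $g(1,-1/u_1)=0$ forces $x_1\mid p$, hence $\alpha_1\ge1$), but the heart of the matter is precisely to extract from stability the correct quantitative constraints on the tuple $(\delta_i)$ and to match them against the feasibility conditions for the transportation polytope with the extra linear constraint $A\bm u=\bm\delta$. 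That matching is the actual content of the theorem, and it is not supplied by either your IVT outline or your induction sketch.

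Finally, the appeal to the forest structure of extreme points of $\mathcal P$ is not circular (that structure is a fact about $\mathcal P$ alone), but it is aimed at a different target: in this paper it is used to lower-bound $\min_{A\in\mathcal P}\cpc_{\bm\kappa}(q_A)$, not to compare $\min_{A\in\mathcal P}F(A)$ with the value $p(\bm u)$ of an \emph{arbitrary} real stable $p$ at a fixed point. So invoking it here does not address the missing inequality.
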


\begin{corollary} \label{cor:nhstab-prod}
    Fix $n,d \in \N$, $\bm{u},\bm\alpha \in \R_{\geq 0}^n$, and $p \in \NHStab_n^d(\bm\alpha)$. There exists $f \in \NHProd_n^d(\bm\alpha)$ such that $p(\bm{u}) = f(\bm{u})$.
\end{corollary}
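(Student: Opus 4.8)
The plan is to deduce the non-homogeneous statement from the homogeneous one (\cref{thm:hstab-prod}) by the standard trick of homogenizing with an auxiliary variable. Given $p \in \NHStab_n^d(\bm\alpha)$, first I would form the homogenization $P \in \R_{\geq 0}[x_0,x_1,\ldots,x_n]$ of degree exactly $d$ by setting $P(x_0,\bm{x}) := x_0^d \, p(x_1/x_0, \ldots, x_n/x_0)$, i.e.\ multiplying each monomial of $p$ of degree $e$ by $x_0^{d-e}$. Two facts need checking: (i) $P$ is real stable in the $n+1$ variables $x_0,\ldots,x_n$, and (ii) $P(\bm{1}) = p(\bm{1}) = 1$ with $\nabla P(\bm{1})$ having entries $\alpha_1,\ldots,\alpha_n$ in the coordinates $1,\ldots,n$ and $d - \|\bm\alpha\|_1$ in the $x_0$ coordinate. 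For (i), real stability is preserved under homogenization in this way — this is a classical fact (see e.g.\ the Borcea--Brändén theory), and it also follows because $x_0^d p(x_1/x_0,\ldots,x_n/x_0)$ is a limit/closure-type construction that preserves the Hurwitz/half-plane property; alternatively one can cite that $p$ stable implies $p(x_1/x_0,\dots)$ has no zeros with all of $x_0,\dots,x_n$ in the upper half plane once one is careful about the sign/quadrant, and clearing denominators by $x_0^d$ (a monomial, hence stability-preserving up to the usual care) gives $P$ stable. For (ii), $P(\bm{1}) = 1^d \cdot p(\bm{1}) = 1$ is immediate, and $\partial_{x_j} P(\bm{1}) = \partial_{x_j} p(\bm{1}) = \alpha_j$ for $j \geq 1$ by a direct computation (the $x_0$-powers are all $1$ at $\bm{x}=\bm{1}$ and their derivatives don't contribute to $\partial_{x_j}$ for $j\geq 1$ after evaluating); Euler's identity for the homogeneous $P$ of degree $d$ then forces $\partial_{x_0}P(\bm{1}) = d - \sum_{j=1}^n \alpha_j = d - \|\bm\alpha\|_1$.

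Next I would apply \cref{thm:hstab-prod} to $P$ with the $n$ there replaced by $n+1$, the vector $\bm\alpha$ there replaced by $(\alpha_1,\ldots,\alpha_n, d-\|\bm\alpha\|_1)$, and the evaluation point $\bm{u}$ there replaced by $(1, u_1, \ldots, u_n)$. This produces an $\R_{\geq 0}$-valued $d \times (n+1)$ matrix $A$ with all row sums $1$ and column sums $(\alpha_1,\ldots,\alpha_n,d-\|\bm\alpha\|_1)$ such that $P(1,u_1,\ldots,u_n) = \prod_{i=1}^d (A (1,\bm{u})^\top)_i = \prod_{i=1}^d \big(a_{i,n+1} + \sum_{j=1}^n a_{i,j} u_j\big)$, after matching up indices so that the last column of $A$ is the one that multiplies the homogenizing variable (a harmless reindexing of columns). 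Reading off the definition, the polynomial $f(\bm{x}) := \prod_{i=1}^d \big(a_{i,n+1} + \sum_{j=1}^n a_{i,j} x_j\big)$ lies in $\NHProd_n^d(\bm\alpha)$ since $A \in \NHMat_n^d(\bm\alpha)$ by the row-sum and column-sum conditions just verified. Finally, $f(\bm{u}) = P(1,\bm{u}) = 1^d \cdot p(u_1/1,\ldots,u_n/1) = p(\bm{u})$, which is exactly the claim.

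The main obstacle is the bookkeeping around homogenization and stability: one must be slightly careful that $P$ has degree \emph{exactly} $d$ (if $p$ has degree strictly less than $d$ this is still fine since $\NHStab_n^d(\bm\alpha)$ only requires degree \emph{at most} $d$, and the construction pads with extra powers of $x_0$ exactly as intended), and one must justify that multiplying by the monomial $x_0^{d-e}$ term-by-term — equivalently, that the substitution $x_j \mapsto x_j/x_0$ followed by clearing denominators — preserves real stability in all $n+1$ variables. This is where I would be most careful, but it is a known closure property of real stable polynomials, so the argument is essentially a citation plus the two routine identities $P(\bm 1)=1$ and $\nabla P(\bm 1) = (\bm\alpha, d-\|\bm\alpha\|_1)$. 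Everything else is a direct unwinding of definitions, which is why the paper rightly calls this ``an immediate corollary.''
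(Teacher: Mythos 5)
Your proposal is correct and takes exactly the same route as the paper: homogenize $p$ with an auxiliary variable, apply \cref{thm:hstab-prod} in $n+1$ variables to the homogenization at the lifted point, and dehomogenize by setting the extra coordinate to $1$. You spell out more of the bookkeeping (stability of the homogenization, the gradient computation, and the Euler-identity derivation of the $(n+1)$-st coordinate $d-\|\bm\alpha\|_1$) than the paper, which relegates these to "a direct computation," but the substance is identical.
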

\begin{proof}
    Let $q(\bm{x}) = x_{n+1}^d \cdot p\left(\frac{x_1}{x_{n+1}}, \ldots, \frac{x_n}{x_{n+1}}\right)$ be the homogenization of $p$, and define $\bm\beta := \nabla q(\bm{1})$. So $q \in \R_{\geq 0}[x_1,\ldots,x_{n+1}]$ of homogeneous degree $d$ such that $q(\bm{1}) = 1$ and $\nabla q(\bm{1}) = \bm\beta = (\alpha_1,\ldots,\alpha_n,d-\|\bm\alpha\|_1)$. Define $u_{n+1} := 1$, apply \cref{thm:hstab-prod} to $q$ and $\bm{u}$, and dehomogenize to obtain the desired result. 
\end{proof}

We now use this result to reduce the problem of bounding the capacity to products of affine linear forms.

\begin{corollary} \label{cor:prod-cap-bound}
    For $p \in \NHStab_n^d(\bm\alpha)$, we have
    \[
        \cpc_\kappa(p) \geq L^{\NHProd}_n(\bm\alpha;\bm\kappa).
    \]
\end{corollary}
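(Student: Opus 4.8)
The goal is to prove \cref{cor:prod-cap-bound}: for every $p \in \NHStab_n^d(\bm\alpha)$ we have $\cpc_{\bm\kappa}(p) \geq L^{\NHProd}_n(\bm\alpha;\bm\kappa)$.

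\medskip

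The plan is to unwind the definition of capacity pointwise and apply \cref{cor:nhstab-prod} at each point. Fix $p \in \NHStab_n^d(\bm\alpha)$ and fix an arbitrary $\bm{u} \in \R_{>0}^n$. By \cref{cor:nhstab-prod} (applied to $n$, $d$, this particular $\bm{u}$, and $\bm\alpha$), there exists a polynomial $f_{\bm{u}} \in \NHProd_n^d(\bm\alpha)$ with $p(\bm{u}) = f_{\bm{u}}(\bm{u})$. Hence
\[
    \frac{p(\bm{u})}{\bm{u}^{\bm\kappa}} = \frac{f_{\bm{u}}(\bm{u})}{\bm{u}^{\bm\kappa}} \geq \inf_{\bm{x} > 0} \frac{f_{\bm{u}}(\bm{x})}{\bm{x}^{\bm\kappa}} = \cpc_{\bm\kappa}(f_{\bm{u}}) \geq \min_{d' \in \N} \min_{q \in \NHProd_n^{d'}(\bm\alpha)} \cpc_{\bm\kappa}(q) = L^{\NHProd}_n(\bm\alpha;\bm\kappa).
\]
Since this holds for every $\bm{u} \in \R_{>0}^n$, taking the infimum over $\bm{u}$ on the left gives $\cpc_{\bm\kappa}(p) = \inf_{\bm{u} > 0} \frac{p(\bm{u})}{\bm{u}^{\bm\kappa}} \geq L^{\NHProd}_n(\bm\alpha;\bm\kappa)$, as desired.

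\medskip

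The key subtlety — and the only place any care is needed — is that the productizing matrix $A$ (equivalently, the product polynomial $f_{\bm{u}}$) produced by \cref{cor:nhstab-prod} depends on the chosen point $\bm{u}$: the equality $p(\bm{u}) = f_{\bm{u}}(\bm{u})$ is only guaranteed at that one point, not globally. This is exactly why the argument passes through $\cpc_{\bm\kappa}$, which is itself an infimum: we do not need a single global product representation of $p$, only that for each evaluation point there is \emph{some} product polynomial in $\NHProd_n^d(\bm\alpha)$ matching $p$ there, and every such polynomial has capacity at least $L^{\NHProd}_n(\bm\alpha;\bm\kappa)$ by definition of the latter as a minimum over all $d$ and all of $\NHProd_n^d(\bm\alpha)$. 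One should also note that $L^{\NHProd}_n(\bm\alpha;\bm\kappa)$ is well-defined (the inner minimum over the compact polytope $\NHMat_n^d(\bm\alpha)$ is attained for each $d$, and one may restrict to $d \leq$ some bound, or simply take an infimum) so that the final inequality makes sense; since $\NHProd_n^d(\bm\alpha)$ is nonempty precisely when $d \geq \|\bm\alpha\|_1$ and the relevant $d$ here is the degree of $p$, this is not an issue.

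\medskip

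In short, there is no real obstacle: the content is entirely carried by \cref{cor:nhstab-prod}, and \cref{cor:prod-cap-bound} is the routine pointwise consequence obtained by taking infima. The one thing worth emphasizing in the write-up is the pointwise-dependence remark above, so that the reader understands why it suffices to productize separately at each point rather than needing a uniform product representation.
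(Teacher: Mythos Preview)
Your argument is correct and is essentially the same as the paper's: both fix an evaluation point, invoke \cref{cor:nhstab-prod} to replace $p$ by some $f \in \NHProd_n^d(\bm\alpha)$ at that point, and then pass to the infimum over points and the minimum over all product polynomials. The paper compresses this into a single displayed chain of inequalities, while you spell out the pointwise dependence of $f_{\bm u}$ on $\bm u$ more explicitly, but the content is identical.
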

\begin{proof}
    For any $\bm{x} \in \R_{\geq 0}^n$, let $f \in \NHProd_n^d(\bm\alpha)$ be such that $p(\bm{x}) = f(\bm{x})$ according to \cref{cor:nhstab-prod}. With this, we have
    \[
        \cpc_{\bm\kappa}(p) = \inf_{\bm{x} > 0} \frac{p(\bm{x})}{\bm{x}^{\bm\kappa}} \geq \inf_{\bm{x} > 0} \min_{d \in \N} \min_{f \in \NHProd_n^d(\bm\alpha)} \frac{f(\bm{x})}{\bm{x}^{\bm\kappa}} = L_n^{\NHProd}(\bm\alpha;\bm\kappa).
    \]
\end{proof}

\subsection{The Extreme Points of $\NHMat_n^d(\alpha)$}

The next result implies we can reduce to the extreme points of $\NHMat_n^d(\bm\alpha)$ to lower bound $L_n^{\NHProd}(\bm\alpha;\bm\kappa)$.

\begin{lemma}[See Thm. 3.1 of \cite{Barv08}] \label{lem:log-concave_cap}
    Given $\bm\kappa \in \Z_{\geq 0}^n$ and $\bm\alpha \in \R_{\geq 0}^n$, let $\phi: \NHMat_n^d(\bm\alpha) \to \R_{\geq 0}$ be the function which maps $M$ to $\cpc_{\bm\kappa}(p)$ where $p$ is the polynomial associated to $M$. Then $\phi$ is log-concave on $\NHMat_n^d(\bm\alpha)$.
\end{lemma}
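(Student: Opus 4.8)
The plan is to write $\log\phi$ as a pointwise infimum of functions each of which is concave in the matrix $A$; since a pointwise infimum of concave functions is concave, this gives concavity of $\log\phi$, i.e.\ log-concavity of $\phi$. Write $p_A$ for the polynomial associated to $A \in \NHMat_n^d(\bm\alpha)$, so $p_A(\bm{x}) = \prod_{i=1}^d \left(a_{i,n+1} + \sum_{j=1}^n a_{i,j} x_j\right)$. Making the change of variables $x_j = e^{t_j}$ (a bijection $\R_{>0}^n \to \R^n$) in the definition of capacity and using that $\log$ is monotone and continuous on $[0,\infty)$ (with the convention $\log 0 = -\infty$), we obtain
\[
    \log\phi(A) \;=\; \log \cpc_{\bm\kappa}(p_A) \;=\; \inf_{\bm{t}\in\R^n} F(A,\bm{t}), \qquad F(A,\bm{t}) := \sum_{i=1}^d \log\!\left(a_{i,n+1} + \sum_{j=1}^n a_{i,j}\, e^{t_j}\right) - \sum_{j=1}^n \kappa_j t_j .
\]

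Next I would check that for each \emph{fixed} $\bm{t}\in\R^n$, the map $A \mapsto F(A,\bm{t})$ is concave on $\NHMat_n^d(\bm\alpha)$. This domain is convex, being the intersection of the nonnegative orthant with the affine subspace cut out by the prescribed row sums and column sums. For each row index $i$, the quantity $a_{i,n+1} + \sum_j a_{i,j} e^{t_j}$ is a linear function with nonnegative coefficients of the entries of row $i$, taking values in $[0,\infty)$; composing the concave nondecreasing function $\log$ with it yields a concave (extended-real-valued) function of row $i$. Distinct rows involve disjoint sets of matrix entries, so the sum over $i$ is concave in $A$, and the term $-\sum_j \kappa_j t_j$ is constant in $A$. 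Hence $F(\cdot,\bm{t})$ is concave. Taking the pointwise infimum over $\bm{t}$, we conclude $\log\phi = \inf_{\bm{t}} F(\cdot,\bm{t})$ is concave on $\NHMat_n^d(\bm\alpha)$, so $\phi$ is log-concave. (Note $\cpc_{\bm\kappa}(p_A) \le p_A(\bm{1}) = 1$, so $\log\phi \le 0$ and the infimum is never $+\infty$; the only degenerate value is $-\infty$, which is permitted for a log-concave function and occurs exactly when $\bm\kappa$ lies outside the Newton polytope of $p_A$.)

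The one point to be careful about — and the crux of why the argument works — is that we organize it around a pointwise infimum: we need concavity of $F$ in $A$ for each \emph{fixed} $\bm{t}$, and \emph{not} joint concavity in $(A,\bm{t})$, since partial minimization of a jointly concave function need not be concave. Everything else is the elementary fact that $\log$ of a nonnegative linear form is concave, plus the observation that $\NHMat_n^d(\bm\alpha)$ is convex. This is exactly the argument of \cite{Barv08}, Thm.\ 3.1, specialized to the polynomials arising from $\NHMat_n^d(\bm\alpha)$; I would include it for completeness.
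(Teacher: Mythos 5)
Your proof is correct. The paper's own argument (in the appendix) takes the same essential route but organizes it differently: it fixes $A, B \in \NHMat_n^d(\bm\alpha)$, forms the midpoint matrix $\frac{A+B}{2}$, and applies the AM--GM inequality $\frac{\ell_A(\bm{x})+\ell_B(\bm{x})}{2} \ge \sqrt{\ell_A(\bm{x})\,\ell_B(\bm{x})}$ to each affine linear factor inside the $\inf_{\bm{x}>0}$, then splits the product and the infimum to get $\cpc_{\bm\kappa}(f) \ge \bigl[\cpc_{\bm\kappa}(p)\cpc_{\bm\kappa}(q)\bigr]^{1/2}$. That AM--GM step is just the midpoint form of the concavity of $\log$ of a nonnegative linear form, so the underlying mechanism is identical to yours. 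The main difference is cosmetic but worth noting: your phrasing (substitute $x_j = e^{t_j}$, observe that $F(\cdot,\bm{t})$ is concave for each fixed $\bm{t}$, take the pointwise infimum) directly yields full concavity of $\log\phi$, whereas the paper's argument as written gives midpoint log-concavity and implicitly relies on the trivial upgrade to weighted AM--GM (or continuity of $\phi$) to close the gap. Your version is also the one that aligns most transparently with the cited Barvinok theorem. Both are fine; yours is slightly cleaner in this respect.
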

\begin{proof}
    See \cref{lem:log-concave_cap_proof}.
\end{proof}

We next describe the extreme points of $\NHMat_n^d(\bm\alpha)$ via bipartitioned forests.

\begin{lemma} \label{lem:forest_extreme}
    Any extreme point of $\NHMat_n^d(\bm\alpha)$ has support given by a bipartite forest on $d$ left vertices and $n+1$ right vertices.
\end{lemma}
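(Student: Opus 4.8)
The plan is to use the standard characterization of extreme points of transportation-type polytopes: a point of $\NHMat_n^d(\bm\alpha)$ is extreme if and only if its support contains no cycle in the associated bipartite graph. Since a bipartite graph with no cycle is a forest, this gives the result. I would set this up carefully as follows.

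First I would recall the setup: a matrix $A \in \NHMat_n^d(\bm\alpha)$ is an $\R_{\geq 0}$-valued $d \times (n+1)$ matrix with fixed row sums (all equal to $1$) and fixed column sums (equal to $\alpha_1,\ldots,\alpha_n,d-\|\bm\alpha\|_1$). Associate to $A$ the bipartite graph $G_A$ on left vertex set $[d]$ (rows) and right vertex set $[n+1]$ (columns), with an edge $(i,j)$ precisely when $a_{i,j} > 0$. The claim to prove is the contrapositive direction needed here: if $G_A$ contains a cycle, then $A$ is not an extreme point of $\NHMat_n^d(\bm\alpha)$.

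So suppose $G_A$ contains a cycle $C = (i_1, j_1, i_2, j_2, \ldots, i_\ell, j_\ell, i_1)$, which alternates between left and right vertices since $G_A$ is bipartite; in particular $C$ has even length $2\ell$. The key step is the standard perturbation argument: define a matrix $B$ supported on the edges of $C$ by placing $+1$ and $-1$ alternately around the cycle (i.e., $b_{i_k, j_k} = +1$, $b_{i_{k+1}, j_k} = -1$, cyclically). Because the signs alternate around an even cycle, every row and every column of $B$ that is touched by $C$ contains exactly one $+1$ and exactly one $-1$ among the cycle edges, so all row sums and column sums of $B$ are zero. Since every edge of $C$ corresponds to a strictly positive entry of $A$, for sufficiently small $t > 0$ both $A + tB$ and $A - tB$ remain entrywise non-negative, and they still have the correct (unchanged) row and column sums, hence lie in $\NHMat_n^d(\bm\alpha)$. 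Then $A = \tfrac{1}{2}(A+tB) + \tfrac{1}{2}(A-tB)$ with $A + tB \neq A - tB$, so $A$ is not extreme. Contrapositively, every extreme point has acyclic support, i.e., $G_A$ is a forest on $d$ left vertices and $n+1$ right vertices, which is exactly the statement of \cref{lem:forest_extreme}.

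I do not expect a genuine obstacle here — this is the textbook argument for extreme points of transportation polytopes, and the only things to be careful about are (i) verifying that the alternating $\pm 1$ assignment around an even cycle really does zero out all touched row and column sums even when a row or column appears more than once in (different parts of) the cycle walk — which one can ensure by first extracting a genuine simple cycle from any closed walk in $G_A$ — and (ii) noting that entries of $A$ not on the cycle are left untouched by $B$, so the non-negativity perturbation bound $t$ depends only on the (finitely many, strictly positive) cycle entries. One could alternatively phrase the whole thing via a dimension/rank count on the defining equality constraints, but the explicit cycle perturbation is cleaner and makes the "forest" conclusion transparent.
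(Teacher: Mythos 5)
Your proposal is correct and follows essentially the same approach as the paper: both arguments take a cycle in the bipartite support graph, assign alternating signs around it, and perturb in both directions to write the matrix as a midpoint of two distinct points of $\NHMat_n^d(\bm\alpha)$, contradicting extremality. Your extra remarks about extracting a simple cycle and choosing $t$ small enough for non-negativity are good hygiene but match the paper's intent.
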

\begin{proof}
    Let $M$ be an extreme point of $\NHMat_n^d(\bm\alpha)$, and suppose its bipartite support graph $G$ does not give a forest. Then $G$ must contain an even simple cycle. Group the edges of this cycle into two groups such that the odd edges make up one group, and the even edges make up the other (with any starting point). Add $\epsilon > 0$ to all matrix entries corresponding to even edges and subtract $\epsilon$ to all matrix entries corresponding to odd edges, to construct $M_+ \in \NHMat_n^d(\bm\alpha)$. Do the same thing, but reverse the signs, to construct $M_- \in \NHMat_n^d(\bm\alpha)$. Thus $M = \frac{M_+ + M_-}{2}$, contradicting the fact that $M$ is an extreme point.
\end{proof}

In what follows, we will also need the following basic graph theoretic result.

\begin{lemma} \label{lem:left_leaves}
    Let $G$ be a bipartite forest on $m$ left vertices and $n$ right vertices such that $G$ has no vertices of degree 0. Then $G$ has at least $m-n+1$ left leaves.
\end{lemma}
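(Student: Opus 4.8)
The plan is to induct on the number of left vertices $m$, or equivalently on the number of edges, exploiting the fact that a forest with $c$ connected components on $N$ total vertices has exactly $N - c$ edges. First I would handle the base case $m = 1$: then $G$ is a star centered at the single left vertex (or, if that vertex has degree $1$, just an edge), and in any case the single left vertex, having no vertex of degree $0$ to waste on, is a leaf precisely when it has degree $1$; more to the point, one checks directly that the number of left leaves is at least $1 - n + 1 = 2 - n$, which is vacuous once $n \geq 2$ and easy when $n = 1$. Rather than belabor small cases, the cleaner route is a global counting argument, which I describe next and which I expect to be the actual proof.

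The main step is a double-counting of edges. Since $G$ is a bipartite forest with no isolated vertices, on $m + n$ vertices with, say, $c \geq 1$ components, it has exactly $m + n - c \leq m + n - 1$ edges. On the other hand, let $\ell$ be the number of left leaves (left vertices of degree exactly $1$). Every left vertex has degree $\geq 1$ (no degree-$0$ vertices), so the $m - \ell$ non-leaf left vertices have degree $\geq 2$, and summing degrees over the left side gives
\[
    |E(G)| = \sum_{u \text{ left}} \deg(u) \geq \ell \cdot 1 + (m - \ell) \cdot 2 = 2m - \ell.
\]
Combining, $2m - \ell \leq |E(G)| \leq m + n - 1$, which rearranges to $\ell \geq m - n + 1$, exactly the claim. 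The hard part is essentially just making sure the edge count $|E(G)| \leq m+n-1$ is correctly justified (it uses acyclicity and that $G$ spans all $m+n$ vertices, which is where the ``no degree-$0$ vertices'' hypothesis enters) and that the degree inequality on the left side is tight in the right direction; both are routine, so I do not anticipate a genuine obstacle here.

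If one preferred an inductive presentation instead: remove a left leaf $u$ together with its unique neighbor $v$ if $v$ now becomes isolated — but this is fiddly because deleting $u$ can strand $v$, changing $n$, so the bookkeeping is less clean than the counting argument. For that reason I would present the counting proof above as the main argument, and it is short enough that no auxiliary lemma is needed. One small remark worth inserting: the bound is tight, achieved for instance by a ``caterpillar'' or more simply by a disjoint union of stars arranged so that every right vertex has degree $1$ and one left vertex absorbs everything — the inequality chain shows equality holds iff $G$ is connected ($c = 1$) and every non-leaf left vertex has degree exactly $2$.
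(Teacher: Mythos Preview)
Your counting argument is correct and complete: the two inequalities $|E(G)| \leq m+n-1$ (forest on $m+n$ vertices, at least one component) and $|E(G)| = \sum_{u \text{ left}} \deg(u) \geq 2m-\ell$ combine exactly as you say to give $\ell \geq m-n+1$. Nothing is missing.

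This is a genuinely different approach from the paper's. The paper proves the lemma by induction: it takes any leaf $v$, strips off the maximal pendant path starting at $v$ (i.e., repeatedly removes the current vertex if it has become a leaf or isolated), and applies the inductive hypothesis to the smaller forest, casing on whether $v$ was a left or right leaf. Your double-counting proof is shorter and more transparent; it also immediately yields the equality characterization you note ($c=1$ and all non-leaf left vertices of degree exactly $2$), which the inductive proof does not surface so cleanly. The paper's inductive approach, by contrast, makes the leaf-stripping reduction explicit, which mirrors how the lemma is actually consumed in the main induction of \cref{thm:two_term_cap_bound}; but as a stand-alone proof of the lemma, your argument is the cleaner one.
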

\begin{proof}
    See \cref{lem:left_leaves_proof}.
\end{proof}

\subsection{Capacity Bounds via Induction}

Here we complete the proof of \cref{thm:main_nh}. We first prove a simple lemma, which bears some resemblance to the probabilistic union bound.

\begin{lemma} \label{lem:prod_sum}
    Given $\bm{c} \in \R_{\geq 0}^d$ such that $c_i < 1$ for all $i \in [d]$, we have $1 - \sum_{i=1}^d c_i \leq \prod_{i=1}^d (1-c_i)$.
\end{lemma}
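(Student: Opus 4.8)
The plan is to prove the inequality by a straightforward induction on $d$, using only that each factor $1 - c_i$ is strictly positive.

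For the base case $d = 1$ the claim reads $1 - c_1 \leq 1 - c_1$, which is an equality. For the inductive step, suppose the statement holds for $d - 1$, so that $\prod_{i=1}^{d-1}(1 - c_i) \geq 1 - \sum_{i=1}^{d-1} c_i$. Since $c_d < 1$, the quantity $1 - c_d$ is strictly positive, so multiplying both sides of the inductive hypothesis by $1 - c_d$ preserves the inequality:
\[
    \prod_{i=1}^{d}(1 - c_i) \;\geq\; (1 - c_d)\left(1 - \sum_{i=1}^{d-1} c_i\right) \;=\; 1 - \sum_{i=1}^{d} c_i + c_d \sum_{i=1}^{d-1} c_i.
\]
Because $c_d \geq 0$ and each $c_i \geq 0$, the last term $c_d \sum_{i=1}^{d-1} c_i$ is non-negative, and dropping it gives $\prod_{i=1}^{d}(1 - c_i) \geq 1 - \sum_{i=1}^{d} c_i$, completing the induction.

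I do not anticipate any genuine obstacle here; the one point worth noting is that the multiplication step in the inductive argument is valid even when $1 - \sum_{i=1}^{d-1} c_i$ is negative, precisely because the multiplier $1 - c_d$ is positive, so no case distinction on the sign of the partial sum is needed. (An alternative would be to expand $\prod_i (1 - c_i)$ and control the alternating elementary symmetric terms, but this is messier than the induction and offers no advantage.)
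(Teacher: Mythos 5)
Your proof is correct and is essentially the same induction on $d$ that the paper uses: both reduce to the identity $(1-c_d)\bigl(1-\sum_{i=1}^{d-1}c_i\bigr) = 1 - \sum_{i=1}^{d}c_i + c_d\sum_{i=1}^{d-1}c_i$ and discard the non-negative cross term. Your write-up is if anything a little cleaner, since it multiplies the inductive hypothesis directly by $1-c_d>0$ rather than first dividing and then re-multiplying as the paper does.
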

\begin{proof}
    See \cref{lem:prod_sum_proof}.
\end{proof}

The next lemma handles the case from Step 4 in \cref{sec:tech_overview} of setting some variable equal to $0$. To see how these next two lemmas actually are actually used, see \cref{thm:two_term_cap_bound} below.

\begin{lemma} \label{lem:bound_kappa_0}
    For $n \geq 1$, let $p \in \NHProd_n^d(\bm\alpha)$ be the polynomial associated to a $d \times (n+1)$ matrix $M$, and suppose $\bm\kappa \in \Z_{\geq 0}^n$ such that $\kappa_n = 0$ and $\alpha_n-\kappa_n \leq 1-\epsilon$ for some $\epsilon > 0$. Then there exists $q \in \NHProd_{n-1}^d(\bm\beta)$ such that
    \[
        \cpc_{\bm\kappa}(p) \geq \epsilon \cdot \cpc_{\bm\gamma}(q),
    \]
    where $\bm\gamma = (\kappa_1,\ldots,\kappa_{n-1}) \in \Z_{\geq 0}^{n-1}$ and $\bm\beta \in \R_{\geq 0}^{n-1}$ is such that for all $S \subseteq [n-1]$ we have
    \[
        \sum_{j \in S} (\alpha_j-\kappa_j) \leq \sum_{j \in S} (\beta_j-\gamma_j) \leq (\alpha_n-\kappa_n) + \sum_{j \in S} (\alpha_j-\kappa_j).
    \]
\end{lemma}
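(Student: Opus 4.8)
The plan is to exploit the forest structure: by Lemma~\ref{lem:log-concave_cap} and Lemma~\ref{lem:forest_extreme}, the capacity $\cpc_{\bm\kappa}(p)$ as a function of $M \in \NHMat_n^d(\bm\alpha)$ is minimized at an extreme point, whose support is a bipartite forest. So I may assume $M$ is such an extreme point, and in particular column $n$ of $M$ (the right vertex labelled $n$) is either isolated or, being in a forest, connects to the rest of the graph in a restricted way. The key observation is that since $\kappa_n = 0$, the natural operation is to \emph{set $x_n = 0$}: this gives $\cpc_{\bm\kappa}(p) \geq \cpc_{\bm\gamma}(p|_{x_n=0})$ where the right side drops the $x_n$ variable. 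First I would record this inequality, noting $p|_{x_n = 0} = \prod_{i=1}^d (a_{i,n+1} + \sum_{j=1}^{n-1} a_{i,j} x_j)$, which is a product of affine linear forms in $x_1,\dots,x_{n-1}$ but whose rows may no longer sum to $1$ — rows $i$ with $a_{i,n} > 0$ now sum to $1 - a_{i,n} < 1$.

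Next I would renormalize: divide row $i$ by $r_i := 1 - a_{i,n}$ to land back in $\NHMat_{n-1}^d(\bm\beta)$ for the appropriate $\bm\beta$, which rescales the polynomial by $\prod_i r_i$. Because $\cpc$ is multiplicative under scaling the polynomial, $\cpc_{\bm\gamma}(p|_{x_n=0}) = \left(\prod_{i=1}^d r_i\right) \cpc_{\bm\gamma}(q)$ where $q \in \NHProd_{n-1}^d(\bm\beta)$ is the renormalized product. So the whole lemma reduces to showing $\prod_{i=1}^d r_i = \prod_{i=1}^d (1 - a_{i,n}) \geq \epsilon$. Here is where Lemma~\ref{lem:prod_sum} enters: $\prod_i (1 - a_{i,n}) \geq 1 - \sum_i a_{i,n} = 1 - \alpha_n$ (column sum of column $n$), and since $\kappa_n = 0$ the hypothesis $\alpha_n - \kappa_n \leq 1 - \epsilon$ says exactly $1 - \alpha_n \geq \epsilon$. (One needs $a_{i,n} < 1$ for each $i$ to apply Lemma~\ref{lem:prod_sum}; if some $a_{i,n} = 1$ then row $i$ is the monomial $x_n$, so $p$ is divisible by $x_n$, forcing the $\bm{x}^{\bm\kappa}$-coefficient to vanish and $\cpc_{\bm\kappa}(p) = 0$ — actually in that degenerate case $\alpha_n \geq 1$ contradicts $\alpha_n \leq 1 - \epsilon$, so it does not occur.)

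Finally I would verify the column-sum sandwich for $\bm\beta$. By definition $\beta_j = \sum_i a_{i,j}/r_i$ and $\gamma_j = \kappa_j$, so $\sum_{j \in S}(\beta_j - \gamma_j) = \sum_i \frac{1}{r_i}\sum_{j \in S} a_{i,j} - \sum_{j\in S}\kappa_j$. Since $r_i \leq 1$, the lower bound $\sum_{j\in S}(\beta_j - \gamma_j) \geq \sum_{j\in S}(\alpha_j - \kappa_j)$ is immediate. For the upper bound, $\frac{1}{r_i}\sum_{j\in S} a_{i,j} = \sum_{j\in S}a_{i,j} + \frac{a_{i,n}}{r_i}\sum_{j\in S}a_{i,j} \leq \sum_{j\in S}a_{i,j} + a_{i,n}$ because $\sum_{j\in S}a_{i,j} \leq \sum_{j\leq n} a_{i,j} = r_i \leq 1$; summing over $i$ gives $\sum_{j\in S}\beta_j \leq \sum_{j\in S}\alpha_j + \alpha_n$, i.e. the claimed upper bound after subtracting $\sum_{j\in S}\kappa_j$ and recalling $\kappa_n = 0$. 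The main obstacle, and the only genuinely delicate point, is making sure the reduction to an extreme point is legitimate here — i.e. that it suffices to prove the statement for forest-supported $M$, and that the resulting $\bm\beta$ (which a priori depends on which extreme point one picks) can be chosen to satisfy the stated inequalities uniformly; the sandwich bounds above are exactly what makes the choice of $\bm\beta$ robust, so this is really a matter of organizing the quantifiers correctly rather than a new idea. Everything else is the bookkeeping of renormalization plus the one application of Lemma~\ref{lem:prod_sum}.
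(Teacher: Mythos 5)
Your proof is essentially the same as the paper's: set $x_n = 0$ (valid because $\kappa_n=0$ and the coefficients are non-negative — in fact the paper observes this gives equality of capacities, not just $\geq$), renormalize each row $i$ by $r_i = 1-m_{in}$, bound $\prod_i r_i \geq 1-\alpha_n \geq \epsilon$ via \cref{lem:prod_sum}, and verify the column-sum sandwich by the identity $\tfrac{1}{r_i}\sum_{j\in S}a_{ij} = \sum_{j\in S}a_{ij} + \tfrac{a_{in}}{r_i}\sum_{j\in S}a_{ij}$ together with $\sum_{j\in S}a_{ij} \leq r_i$. That is exactly the paper's computation, modulo a small notational slip: you write $\sum_{j\in S}a_{ij} \leq \sum_{j\leq n}a_{ij} = r_i$, but if $a_{in}$ denotes the entry in the column you dropped, then $\sum_{j\leq n}a_{ij} = 1-a_{i,n+1}$, not $r_i = 1-a_{in}$. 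The correct justification is $\sum_{j\in S}a_{ij} \leq \sum_{j=1}^{n-1}a_{ij} + a_{i,n+1} = 1-a_{in} = r_i$, using $S\subseteq[n-1]$ and non-negativity of the constant column; the inequality you need still holds.

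The one substantive misconception is the opening reduction to extreme points. You flag it as ``the only genuinely delicate point,'' but it is neither delicate nor needed: your renormalization argument is entirely elementwise and works for an arbitrary $M \in \NHMat_n^d(\bm\alpha)$, and the sandwich bounds on $\bm\beta$ follow for that same $M$ with no reference to the forest structure. Moreover, a reduction to extreme points would not be legitimate here as stated — the lemma claims an inequality for a \emph{given} $p$ and produces a $q$ and $\bm\beta$ built from the same matrix $M$; passing to an extreme point $M'$ would lower the LHS and change the RHS simultaneously, so nothing transfers back. The forest/extreme-point reduction is used in the paper only in \cref{thm:two_term_cap_bound}, where it is applied directly to the object being minimized, not inside this row-renormalization lemma. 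Drop the first paragraph and the rest of your argument stands as is.
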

\begin{proof}
    See \cref{lem:bound_kappa_0_proof}.
\end{proof}

The next lemma handles the case from Step 4 in \cref{sec:tech_overview} of taking the partial derivative with respect to some variable.

\begin{lemma} \label{lem:bound_kappa_1}
    For $n \geq 1$, let $p \in \NHProd_n^d(\bm\alpha)$ be the polynomial associated to a $d \times (n+1)$ matrix $M$ such that column $n$ has exactly one non-zero entry, and suppose $\bm\kappa \in \Z_{\geq 0}^n$ is such that $\kappa_n = 1$ and $\kappa_n-\alpha_n \leq 1-\epsilon$ for some $\epsilon > 0$. Then there exists $q \in \NHProd_{n-1}^{d-1}(\bm\beta)$ such that
    \[
        \cpc_{\bm\kappa}(p) \geq \epsilon \cdot \cpc_{\bm\gamma}(q),
    \]
    where $\bm\gamma = (\kappa_1,\ldots,\kappa_{n-1}) \in \Z_{\geq 0}^{n-1}$ and $\bm\beta \in \R_{\geq 0}^{n-1}$ is such that for all $S \subseteq [n-1]$ we have
    \[
        \sum_{j \in S} (\kappa_j-\alpha_j) \leq \sum_{j \in S} (\gamma_j-\beta_j) \leq (\kappa_n-\alpha_n) + \sum_{j \in S} (\kappa_j-\alpha_j).
    \]
\end{lemma}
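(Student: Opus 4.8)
The plan is to exploit the fact that column $n$ of $M$ has a single nonzero entry in order to factor $p$ as the product of an affine form in $x_n$ with a smaller $\NHProd$ polynomial, and then to optimize the $x_n$ direction away explicitly.

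First I would locate the unique row $r$ with $M_{r,n} \neq 0$. Since every row of $M$ has non-negative entries summing to $1$, this forces $\alpha_n = M_{r,n} \in (0,1]$; in particular $\kappa_n - \alpha_n = 1 - \alpha_n$, so the hypothesis $\kappa_n - \alpha_n \leq 1-\epsilon$ is precisely $\alpha_n \geq \epsilon$. As no affine form other than $\ell_r$ involves $x_n$, I can write $p(\bm{x}) = (\alpha_n x_n + h(\bm{x}'))\, q(\bm{x}')$ with $\bm{x}' = (x_1,\dots,x_{n-1})$, where $h(\bm{x}') = M_{r,n+1} + \sum_{j=1}^{n-1} M_{r,j} x_j$ has non-negative coefficients and $q = \prod_{i \neq r} \ell_i$. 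Here $q$ is the polynomial associated to the $(d-1)\times n$ matrix $M'$ obtained by deleting row $r$ and column $n$ from $M$. A short bookkeeping check shows $M' \in \NHMat_{n-1}^{d-1}(\bm\beta)$ with $\beta_j = \alpha_j - M_{r,j}$ (row sums are preserved because the deleted entries $M_{i,n}$, $i \neq r$, vanish; column $j$ loses exactly $M_{r,j}$), so $q \in \NHProd_{n-1}^{d-1}(\bm\beta)$ and $\bm\gamma = (\kappa_1,\dots,\kappa_{n-1})$ as required; note $\beta_j \geq 0$ since $\alpha_j \geq M_{r,j}$.

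Next I would compute the capacity directly. For $\bm{x} > 0$, using $\kappa_n = 1$,
\[
    \frac{p(\bm{x})}{\bm{x}^{\bm\kappa}} = \Bigl(\alpha_n + \frac{h(\bm{x}')}{x_n}\Bigr)\frac{q(\bm{x}')}{(\bm{x}')^{\bm\gamma}}.
\]
Since $h(\bm{x}') \geq 0$ and $q(\bm{x}') \geq 0$, for each fixed $\bm{x}'$ the infimum over $x_n > 0$ equals $\alpha_n\, q(\bm{x}')/(\bm{x}')^{\bm\gamma}$ (approached as $x_n \to \infty$, or attained outright when $h \equiv 0$). Taking the infimum over $\bm{x}' > 0$ then gives $\cpc_{\bm\kappa}(p) = \alpha_n \cdot \cpc_{\bm\gamma}(q)$, and since $\alpha_n \geq \epsilon$ and $\cpc_{\bm\gamma}(q) \geq 0$ we conclude $\cpc_{\bm\kappa}(p) \geq \epsilon \cdot \cpc_{\bm\gamma}(q)$.

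Finally I would verify the sandwich on $\bm\beta$. From $\gamma_j - \beta_j = \kappa_j - \alpha_j + M_{r,j}$ we get, for any $S \subseteq [n-1]$, that $\sum_{j \in S}(\gamma_j - \beta_j) = \sum_{j \in S}(\kappa_j - \alpha_j) + \sum_{j \in S} M_{r,j}$; the lower bound is immediate from $M_{r,j} \geq 0$, and the upper bound follows since $\sum_{j \in S} M_{r,j} \leq \sum_{j=1}^{n-1} M_{r,j} = 1 - M_{r,n} - M_{r,n+1} \leq 1 - \alpha_n = \kappa_n - \alpha_n$. The only points that require real care are (i) justifying the $x_n \to \infty$ limit cleanly (the degenerate case $h \equiv 0$, and the absence of sign issues because all quantities are non-negative), and (ii) orienting the two $\bm\beta$ inequalities correctly, which comes down to the non-negativity of $M_{r,j}$ and to discarding the non-negative term $M_{r,n+1}$ in the row-sum identity for row $r$.
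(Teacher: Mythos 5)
Your proof is correct and follows essentially the same route as the paper: factor out the unique affine form involving $x_n$, identify $q$ with the matrix obtained by deleting row $r$ and column $n$, and observe $\sum_{j \in S} M_{r,j} \leq 1 - M_{r,n}$ for the $\bm\beta$ sandwich. The only cosmetic difference is that you compute $\inf_{x_n>0}$ directly to get the exact identity $\cpc_{\bm\kappa}(p) = \alpha_n \cdot \cpc_{\bm\gamma}(q)$, whereas the paper invokes the generic submultiplicativity $\cpc_{\bm\kappa_1+\bm\kappa_2}(fg) \geq \cpc_{\bm\kappa_1}(f)\,\cpc_{\bm\kappa_2}(g)$ together with $\cpc_{\bm e_n}(\ell_{i_0}) = m_{i_0,n}$, which gives only the (sufficient) inequality.
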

\begin{proof}
    See \cref{lem:bound_kappa_1_proof}.
\end{proof}

We now combine the lemmas above to prove our strongest capacity lower bound for polynomials in $\NHProd_n^d(\bm\alpha)$.

\begin{theorem} \label{thm:two_term_cap_bound}
    Fix any $\bm\kappa \in \Z_{\geq 0}^n$, $\bm\alpha \in \R_{\geq 0}^n$, and $\bm\epsilon, \bm\delta \in \R_{> 0}^n$ such that
    \[
        \max_{S \in \binom{[n]}{k}} \sum_{j \in S} (\alpha_j-\kappa_j) \leq 1 - \epsilon_k \qquad \text{and} \qquad \max_{S \in \binom{[n]}{k}} \sum_{j \in S} (\kappa_j-\alpha_j) \leq 1 - \delta_k
    \]
    for all $k \in [n]$. Then
    \[
        \cpc_{\bm\kappa}(p) \geq \min_{0 \leq \ell \leq n} \prod_{k=1}^\ell \epsilon_k \prod_{k=1}^{n-\ell} \delta_k
    \]
    for every $p \in \NHProd_n^d(\bm\alpha)$.
\end{theorem}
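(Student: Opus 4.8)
The plan is to carry out Steps 2--4 of the strategy from \cref{sec:tech_overview}, packaged as a single induction on $n+d$. Fix the data $\bm\kappa,\bm\alpha,\bm\epsilon,\bm\delta$ satisfying the hypotheses and fix $d$. First I would reduce to forest-supported matrices: by \cref{lem:log-concave_cap} the map $M\mapsto\cpc_{\bm\kappa}(\phi(M))$ is log-concave on the compact polytope $\NHMat_n^d(\bm\alpha)$, hence attains its minimum at an extreme point, and by \cref{lem:forest_extreme} every extreme point has forest support. So it suffices to prove $\cpc_{\bm\kappa}(p)\ge\min_{0\le\ell\le n}\prod_{k=1}^\ell\epsilon_k\prod_{k=1}^{n-\ell}\delta_k$ when $p$ is associated to a forest-supported $M$; the induction hypothesis is the full statement of the theorem for all strictly smaller $n+d$ (with \emph{no} forest restriction), so it may be applied freely to whatever product polynomial a reduction produces. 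For the base cases $n=0$ and $d=0$ one has $p\equiv 1$, which forces $\bm\alpha=\bm 0$ and then $\bm\kappa=\bm0$ (since $\kappa_j-\alpha_j=\kappa_j\le 1-\delta_1<1$), so $\cpc_{\bm\kappa}(p)=1$, while the hypotheses force $\epsilon_k,\delta_k\le 1$ and hence the right-hand side is at most $1$.

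For the inductive step, with $p$ associated to a forest-supported $M$, I split into three cases. \textbf{Case 1: $\kappa_j=0$ for some $j$}, say $j=n$. Then $\alpha_n=\alpha_n-\kappa_n\le 1-\epsilon_1<1$, so \cref{lem:bound_kappa_0} applies with $\epsilon:=1-\alpha_n\ge\epsilon_1$, yielding $q\in\NHProd_{n-1}^{d}(\bm\beta)$ with $\cpc_{\bm\kappa}(p)\ge\epsilon_1\cdot\cpc_{\bm\gamma}(q)$ where $\bm\gamma=(\kappa_1,\dots,\kappa_{n-1})$. The control on $\bm\beta-\bm\gamma$ supplied by the lemma shows (using the map $S\mapsto S\cup\{n\}$ from $\binom{[n-1]}{k}$ into $\binom{[n]}{k+1}$) that the shifted pair $(\epsilon'_k,\delta'_k):=(\epsilon_{k+1},\delta_k)$ is admissible for $q$; applying the induction hypothesis and substituting $\ell\mapsto\ell+1$ gives $\cpc_{\bm\kappa}(p)\ge\min_{1\le m\le n}\prod_{k=1}^{m}\epsilon_k\prod_{k=1}^{n-m}\delta_k\ge\min_{0\le m\le n}\prod_{k=1}^{m}\epsilon_k\prod_{k=1}^{n-m}\delta_k$. \textbf{Case 2: $\bm\kappa>\bm 0$ and $M$ has a left leaf}, i.e.\ a row with one nonzero entry, necessarily equal to $1$. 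If it lies in column $n+1$ the corresponding factor of $p$ is the constant $1$; deleting the row gives $p\in\NHProd_n^{d-1}(\bm\alpha)$ with identical data, and we finish by induction. If it lies in column $j\le n$ then $p=x_j\cdot p'$ with $p'\in\NHProd_n^{d-1}(\bm\alpha-\bm e_j)$, and since $\kappa_j\ge 1$ we have $\cpc_{\bm\kappa}(p)=\cpc_{\bm\kappa-\bm e_j}(p')$; all differences $\alpha_i-\kappa_i$ are unchanged, so the same $\bm\epsilon,\bm\delta$ are admissible and induction closes this case with exactly the desired bound.

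\textbf{Case 3: $\bm\kappa>\bm 0$ and $M$ has no left leaf.} Here I first claim $M$ has a column $j\le n$ with exactly one nonzero entry: each such column is nonempty (else $\alpha_j=0$ would force $\kappa_j=0$), and applying \cref{lem:left_leaves} to the support forest (deleting the at-most-one isolated vertex $n+1$ if needed), first with the $d$ rows as left vertices --- the absence of left leaves forces $d\le n$ --- and then with the columns $1,\dots,n$ as left vertices, produces such a column; take $j=n$. Since column $n$ has a single entry of size at most $1$ we get $\alpha_n\le 1$, and $\kappa_n-\alpha_n\le 1-\delta_1<1$ together with $\kappa_n\ge 1$ forces $\kappa_n=1$. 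Now \cref{lem:bound_kappa_1} applies with $\epsilon:=1-(\kappa_n-\alpha_n)\ge\delta_1$, giving $q\in\NHProd_{n-1}^{d-1}(\bm\beta)$ with $\cpc_{\bm\kappa}(p)\ge\delta_1\cdot\cpc_{\bm\gamma}(q)$, and its bound on $\bm\gamma-\bm\beta$ shows that now $(\epsilon'_k,\delta'_k):=(\epsilon_k,\delta_{k+1})$ is admissible for $q$. The induction hypothesis then gives $\cpc_{\bm\kappa}(p)\ge\delta_1\cdot\min_{0\le\ell\le n-1}\prod_{k=1}^{\ell}\epsilon_k\prod_{k=2}^{n-\ell}\delta_k=\min_{0\le\ell\le n-1}\prod_{k=1}^{\ell}\epsilon_k\prod_{k=1}^{n-\ell}\delta_k\ge\min_{0\le\ell\le n}\prod_{k=1}^{\ell}\epsilon_k\prod_{k=1}^{n-\ell}\delta_k$. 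Since $\bm\kappa>\bm 0$ implies the support forest has a left leaf or a leaf column in $\{1,\dots,n\}$, one of Cases 2--3 always applies, completing the induction.

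The main obstacle is the index bookkeeping in Cases 1 and 3: one must check that removing column $n$ transforms the cardinality-indexed quantities $\max_{S\in\binom{[n]}{k}}\sum_{j\in S}(\alpha_j-\kappa_j)$ and $\max_{S\in\binom{[n]}{k}}\sum_{j\in S}(\kappa_j-\alpha_j)$ exactly so as to shift the $\epsilon$-indices down by one (Case 1) or the $\delta$-indices down by one (Case 3) --- this is precisely why the outer $\min_{0\le\ell\le n}$ over the split point $\ell$ is preserved by the recursion, and why one must invoke \cref{lem:bound_kappa_0} versus \cref{lem:bound_kappa_1} according to the sign of $\alpha_n-\kappa_n$ (the examples cited after Step 4 show this dichotomy is real). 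A secondary point is the forest-counting argument in Case 3 that guarantees a leaf column among $\{1,\dots,n\}$ whenever there is no leaf row; this is exactly where the linear edge-count of forests is used, and it is what keeps the recursion from stalling. Everything else --- the $n=0,d=0$ base cases, the reduction to extreme points, and the degree-dropping Case 2 --- is routine given the quoted lemmas.
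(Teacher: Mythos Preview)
Your proposal is correct and follows essentially the same approach as the paper's proof. The only differences are organizational: you induct on $n+d$ rather than lexicographic $(n,d)$, you reduce to forest-supported matrices before the case split rather than only in the $\bm\kappa>\bm 0$ branch, and your Case~2/Case~3 dichotomy is ``has a row leaf or not'' whereas the paper splits on ``$d\ge n+1$ versus $d\le n$'' (and removes all $d-n$ row leaves at once rather than one at a time); these are equivalent rearrangements of the same induction, invoking the same lemmas in the same roles.
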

\begin{proof}
    We prove the desired result by induction on $(n,d)$ with lexicographical order. The base case is the case where $n=0$, which corresponds to $p \equiv 1$ for any $d$. In this case, $\bm\kappa$ is the empty vector and $\cpc_{\bm\kappa}(p) = 1$, which implies the desired result. (See \cref{sec:univ_case} for the $n=1$ case written out in detail.)

    For $n \geq 1$, we first handle the case that $\kappa_j = 0$ for some $j \in [n]$, and by permuting the variables we may assume that $\kappa_n = 0$. By \cref{lem:bound_kappa_0}, there exists $q \in \NHProd_{n-1}^d(\bm\beta)$ such that
    \[
        \cpc_{\bm\kappa}(p) \geq \epsilon_1 \cdot \cpc_{\bm\gamma}(q),
    \]
    where $\bm\gamma = (\kappa_1,\ldots,\kappa_{n-1}) \in \Z_{\geq 0}^{n-1}$ and $\bm\beta \in \R_{\geq 0}^{n-1}$ is such that for all $S \subseteq [n-1]$ we have
    \[
        \sum_{j \in S} (\alpha_j-\kappa_j) \leq \sum_{j \in S} (\beta_j-\gamma_j) \leq (\alpha_n-\kappa_n) + \sum_{j \in S} (\alpha_j-\kappa_j).
    \]
    Thus for all $k \in [n-1]$, we have
    \[
        \max_{S \in \binom{[n-1]}{k}} \sum_{j \in S} (\beta_j - \gamma_j) \leq (\alpha_n - \kappa_n) + \max_{S \in \binom{[n-1]}{k}} \sum_{j \in S} (\alpha_j - \kappa_j) \leq 1 - \epsilon_{k+1}
    \]
    and
    \[
        \max_{S \in \binom{[n-1]}{k}} \sum_{j \in S} (\gamma_j - \beta_j) \leq \max_{S \in \binom{[n-1]}{k}} \sum_{j \in S} (\kappa_j - \alpha_j) \leq 1 - \delta_k.
    \]
    Thus by induction we have that
    \[
        \cpc_{\bm\kappa}(p) \geq \epsilon_1 \cdot \cpc_{\bm\gamma}(q) \geq \min_{0 \leq \ell \leq n-1} \epsilon_1 \prod_{k=1}^\ell \epsilon_{k+1} \prod_{k=1}^{n-1-\ell} \delta_k = \min_{1 \leq \ell \leq n} \prod_{k=1}^\ell \epsilon_k \prod_{k=1}^{n-\ell} \delta_k,
    \]
    and this implies the desired result.
    
    Otherwise, $\kappa_j \geq 1$ for all $j \in [n]$. Now fix any extreme point $M \in \NHMat_n^d(\bm\alpha)$, and let $p$ be the polynomial associated to $M$. By \cref{lem:forest_extreme}, $M$ is the (weighted) bipartite adjacency matrix of a forest $G$. We call the vertices of $G$ corresponding to the rows of $M$ the \emph{left vertices} of $G$, and we call the vertices of $G$ corresponding to the columns of $M$ the \emph{right vertices} of $G$. Since $|\alpha_j - \kappa_j| < 1$ for all $j \in [n]$, we further have that none of the first $n$ columns of $M$ are equal to the zero vector, and thus $G$ has at most one (right) vertex of degree 0 (possibly the vertex corresponding to column $n+1$ of $M$).

    We first consider the case where $d \geq n+1$. Then $G$ has at least as many left vertices as right vertices. After possibly removing the right vertex in $G$ of degree 0, \cref{lem:left_leaves} implies $M$ has at least $d-n$ rows with exactly one non-zero entry. Letting $\bm{x}^{\bm{v}}$ be the polynomial associated to these rows of $M$, we have
    \[
        \cpc_{\bm\kappa}(p) = \inf_{\bm{x} > 0} \frac{\bm{x}^{\bm{v}} f(\bm{x})}{\bm{x}^{\bm\kappa}} = \cpc_{\bm\kappa-\bm{v}}(f).
    \]
    With this, we now redefine $\bm\kappa$ to be $\bm\kappa - \bm{v}$, $\bm\alpha$ to be $\bm\alpha - \bm{v}$, $p$ to be $f$, and $M$ to be the matrix with the (at least) $d-n$ rows removed. Since $(\alpha_j - v_j) - (\kappa_j - v_j) = \alpha_j - \kappa_j$ for all $j \in [n]$, the result in this case then follows by induction.

    We now consider the case where $d \leq n$. Then $G$ has at least one more right vertex than left vertices. After possibly removing the right vertex in $G$ of degree 0 (corresponding to column $n+1$ of $M$), \cref{lem:left_leaves} implies at least one of the first $n$ columns of $M$ has exactly one non-zero entry, and by permuting the variables we may assume that it is column $n$. Thus $\alpha_n \leq 1$, and $|\alpha_n - \kappa_n| < 1$ then implies $\kappa_n = 1$. By \cref{lem:bound_kappa_1}, there exists $q \in \NHProd_{n-1}^{d-1}(\bm\beta)$ such that
    \[
        \cpc_{\bm\kappa}(p) \geq \delta_1 \cdot \cpc_{\bm\gamma}(q),
    \]
    where $\bm\gamma = (\kappa_1,\ldots,\kappa_{n-1}) \in \Z_{\geq 0}^{n-1}$ and $\bm\beta \in \R_{\geq 0}^{n-1}$ is such that for all $S \subseteq [n-1]$ we have
    \[
        \sum_{j \in S} (\kappa_j-\alpha_j) \leq \sum_{j \in S} (\gamma_j-\beta_j) \leq (\kappa_n-\alpha_n) + \sum_{j \in S} (\kappa_j-\alpha_j).
    \]
    Thus for all $k \in [n-1]$, we have
    \[
        \max_{S \in \binom{[n-1]}{k}} \sum_{j \in S} (\beta_j - \gamma_j) \leq \max_{S \in \binom{[n-1]}{k}} \sum_{j \in S} (\alpha_j - \kappa_j) \leq 1 - \epsilon_k
    \]
    and
    \[
        \max_{S \in \binom{[n-1]}{k}} \sum_{j \in S} (\gamma_j - \beta_j) \leq (\kappa_n - \alpha_n) + \max_{S \in \binom{[n-1]}{k}} \sum_{j \in S} (\kappa_j - \alpha_j) \leq 1 - \delta_{k+1}.
    \]
    Thus by induction we have that
    \[
        \cpc_{\bm\kappa}(p) \geq \delta_1 \cdot \cpc_{\bm\gamma}(q) \geq \min_{0 \leq \ell \leq n-1} \delta_1 \prod_{k=1}^\ell \epsilon_k \prod_{k=1}^{n-1-\ell} \delta_{k+1} = \min_{0 \leq \ell \leq n-1} \prod_{k=1}^\ell \epsilon_k \prod_{k=1}^{n-\ell} \delta_k,
    \]
    and this implies the desired result.
    
    We have proven the result for polynomials associated to the extreme points of $\NHMat_n^n(\bm\alpha)$. Applying \cref{lem:log-concave_cap} then completes the proof.
\end{proof}

The next result gives the bound which we will use in \cref{sec:proof_app_TSP} to prove \cref{thm:main_imp_prob_bound}, the simply exponential improvement to the probability bound used for the metric TSP application.

\begin{corollary} \label{cor:one_term_cap_bound}
    Fix any $\bm\kappa \in \Z_{\geq 0}^n$ and $\bm\alpha \in \R_{\geq 0}^n$ such that $\bm\epsilon \in \R_{> 0}^n$ can be defined via
    \[
        \epsilon_k := 1 - \max_{\substack{S \subseteq [n] \\ |S| \leq k}} \left|\sum_{j \in S} (\kappa_j-\alpha_j)\right|
    \]
    for all $k \in [n]$. Then
    \[
        \cpc_{\bm\kappa}(p) \geq \prod_{k=1}^n \epsilon_k
    \]
    for every $p \in \NHProd_n^d(\bm\alpha)$.
\end{corollary}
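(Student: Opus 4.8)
The plan is to derive this from \cref{thm:two_term_cap_bound} by choosing the one-sided parameters of that theorem appropriately and then collapsing the resulting minimum over $\ell$ into the single product $\prod_{k=1}^n \epsilon_k$. Concretely, for each $k \in [n]$ I would set
\[
    \epsilon'_k := 1 - \max_{S \in \binom{[n]}{k}} \sum_{j \in S}(\alpha_j - \kappa_j) \qquad \text{and} \qquad \delta'_k := 1 - \max_{S \in \binom{[n]}{k}} \sum_{j \in S}(\kappa_j - \alpha_j).
\]
These meet the hypotheses of \cref{thm:two_term_cap_bound} with equality, so the only thing to check before invoking it is that $\bm\epsilon', \bm\delta' \in \R_{>0}^n$. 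This holds because each of $\epsilon'_k, \delta'_k$ is at least $\epsilon_k$: for any $S$ of size $k$, the quantity $\pm\sum_{j \in S}(\alpha_j - \kappa_j)$ is bounded above by $\max_{|T| \le k}\bigl|\sum_{j \in T}(\kappa_j - \alpha_j)\bigr| = 1 - \epsilon_k$, since such an $S$ is admissible in that maximum. Hence \cref{thm:two_term_cap_bound} gives, for every $p \in \NHProd_n^d(\bm\alpha)$,
\[
    \cpc_{\bm\kappa}(p) \;\ge\; \min_{0 \le \ell \le n} \prod_{k=1}^{\ell} \epsilon'_k \prod_{k=1}^{n-\ell} \delta'_k.
\]

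The remaining task is to show that the right-hand side is at least $\prod_{k=1}^n \epsilon_k$. Since $\epsilon'_k \ge \epsilon_k$ and $\delta'_k \ge \epsilon_k$, it is enough to prove
\[
    \prod_{k=1}^{\ell} \epsilon_k \prod_{k=1}^{n-\ell} \epsilon_k \;\ge\; \prod_{k=1}^{n} \epsilon_k \qquad \text{for every } \ell \in \{0,1,\ldots,n\}.
\]
Here I would use two elementary observations. First, $k \mapsto \epsilon_k$ is non-increasing, since the maximum defining $\epsilon_k$ is taken over the family $\{S : |S| \le k\}$, which grows with $k$ (and $\epsilon_k \le 1$ because $S = \emptyset$ is always admissible). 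Second, if one lists the multiset $\{1,\ldots,\ell\} \uplus \{1,\ldots,n-\ell\}$ in non-decreasing order as $b_1 \le b_2 \le \cdots \le b_n$, then $b_i \le i$ for every $i$; indeed, writing $m := \min\{\ell, n-\ell\}$, one has $b_i = \lceil i/2 \rceil \le i$ for $i \le 2m$ and $b_i = i - m \le i$ for $i > 2m$. Combining the two observations, $\prod_{k=1}^{\ell}\epsilon_k \prod_{k=1}^{n-\ell}\epsilon_k = \prod_{i=1}^n \epsilon_{b_i} \ge \prod_{i=1}^n \epsilon_i$, which is exactly what we wanted. Taking the minimum over $\ell$ then finishes the proof.

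I do not expect a genuine obstacle: once the parameters $\bm\epsilon', \bm\delta'$ are chosen, the statement is a clean repackaging of the two-sided bound of \cref{thm:two_term_cap_bound}, using monotonicity of the sequence $\bm\epsilon$ to absorb the worst-case split between the $\epsilon$-factors and the $\delta$-factors. The only point requiring a moment's care is the multiset inequality $b_i \le i$ and the bookkeeping that turns it, via monotonicity of $\bm\epsilon$, into the desired product inequality.
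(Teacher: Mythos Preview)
Your proposal is correct and follows essentially the same approach as the paper: both apply \cref{thm:two_term_cap_bound} and then use the monotonicity $\epsilon_1 \ge \epsilon_2 \ge \cdots \ge \epsilon_n$ to collapse the minimum over $\ell$ into the single product $\prod_{k=1}^n \epsilon_k$. The only cosmetic difference is that the paper plugs the corollary's $\epsilon_k$ directly for both the $\epsilon$- and $\delta$-parameters of \cref{thm:two_term_cap_bound} (valid because $\max_{S \in \binom{[n]}{k}} \le \max_{|S| \le k}$), whereas you first introduce the tight parameters $\epsilon'_k,\delta'_k$ and then bound them below by $\epsilon_k$; the resulting product inequality you then prove carefully is exactly what the paper leaves as the one-line remark ``Since $\epsilon_1 \ge \cdots \ge \epsilon_n$, this implies the result.''
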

\begin{proof}
    Note that
    \[
        \max_{\substack{S \subseteq [n] \\ |S| \leq k}} \sum_{j \in S} (\alpha_j - \kappa_j) \leq 1 - \epsilon_k \qquad \text{and} \qquad \max_{\substack{S \subseteq [n] \\ |S| \leq k}} \sum_{j \in S} (\kappa_j - \alpha_j) \leq 1 - \epsilon_k
    \]
    for all $k \in [n]$, and thus
    \[
        \cpc_{\bm\kappa}(p) \geq \min_{0 \leq \ell \leq n} \prod_{k=1}^\ell \epsilon_k \prod_{k=1}^{n-\ell} \epsilon_k
    \]
    by \cref{thm:two_term_cap_bound}. Since $\epsilon_1 \geq \epsilon_2 \geq \cdots \geq \epsilon_n$, this implies the result.
\end{proof}

By \cref{cor:prod-cap-bound}, the above results hold for all $p \in \NHStab_n^d(\bm\alpha)$, and we state this formally now.

\begin{corollary} \label{cor:strong_bounds_stable}
    \cref{thm:two_term_cap_bound} and \cref{cor:one_term_cap_bound} hold for all $p \in \NHStab_n^d(\bm\alpha)$.
\end{corollary}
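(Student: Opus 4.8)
The plan is to deduce this directly from \cref{cor:prod-cap-bound}, which has already reduced the capacity bound for an arbitrary non-homogeneous real stable polynomial to the worst case over products of affine linear forms. The key observation is that both \cref{thm:two_term_cap_bound} and \cref{cor:one_term_cap_bound} produce a lower bound on $\cpc_{\bm\kappa}(f)$ that depends only on $\bm\kappa$ and $\bm\alpha$ (through $\bm\epsilon$ and $\bm\delta$) and is completely uniform in the degree parameter; hence the very same quantity lower bounds the infimum-over-all-degrees quantity $L_n^{\NHProd}(\bm\alpha;\bm\kappa)$, and therefore $\cpc_{\bm\kappa}(p)$ for stable $p$.

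Concretely, I would first fix $p \in \NHStab_n^d(\bm\alpha)$ and recall that by definition $p(\bm1)=1$ and $\nabla p(\bm1)=\bm\alpha$, so the vectors $\bm\epsilon,\bm\delta$ of \cref{thm:two_term_cap_bound} (resp.\ the vector $\bm\epsilon$ of \cref{cor:one_term_cap_bound}) are exactly those determined by $\bm\kappa$ and $\nabla p(\bm1)$. Then \cref{cor:prod-cap-bound} gives
\[
    \cpc_{\bm\kappa}(p) \geq L_n^{\NHProd}(\bm\alpha;\bm\kappa) = \min_{d'\in\N}\ \min_{f\in\NHProd_n^{d'}(\bm\alpha)} \cpc_{\bm\kappa}(f).
\]
Next, I would invoke \cref{thm:two_term_cap_bound}: for every $d'\in\N$ and every $f \in \NHProd_n^{d'}(\bm\alpha)$ we have $\cpc_{\bm\kappa}(f) \geq \min_{0\le\ell\le n}\prod_{k=1}^\ell \epsilon_k \prod_{k=1}^{n-\ell}\delta_k$, and since the right-hand side involves neither $d'$ nor $f$, it also lower bounds $L_n^{\NHProd}(\bm\alpha;\bm\kappa)$, hence $\cpc_{\bm\kappa}(p)$. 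Running the identical argument with \cref{cor:one_term_cap_bound} in place of \cref{thm:two_term_cap_bound} yields the second assertion.

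I do not expect a genuine obstacle here, since the substantive work is already contained in \cref{cor:nhstab-prod}, \cref{cor:prod-cap-bound}, \cref{thm:two_term_cap_bound}, and \cref{cor:one_term_cap_bound}. The only points warranting a sentence of care are: (a) that the ``degree at most $d$'' convention in the definition of $\NHStab_n^d(\bm\alpha)$ causes no trouble, precisely because the product-case bounds are uniform over all degrees $d'$; and (b) that the normalization $p(\bm1)=1$, $\nabla p(\bm1)=\bm\alpha$ built into $\NHStab_n^d(\bm\alpha)$ makes the hypotheses defining $\bm\epsilon$ and $\bm\delta$ coincide with those of the product-case statements, so that the two bounds are literally the same expression.
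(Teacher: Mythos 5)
Your proposal is correct and matches the paper's argument exactly: the paper likewise derives this corollary as an immediate consequence of \cref{cor:prod-cap-bound}, using precisely the observation that the product-case bounds in \cref{thm:two_term_cap_bound} and \cref{cor:one_term_cap_bound} are uniform over the degree and depend only on $\bm\kappa$ and $\bm\alpha$. The two points of care you flag (degree-uniformity and the normalization built into $\NHStab_n^d(\bm\alpha)$) are exactly what justifies the paper's one-line deduction.
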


\subsection{Proving \cref{thm:main_nh} and \cref{cor:main_nh_coeff}}

We now complete the proof of \cref{thm:main_nh}, and thus also of \cref{cor:main_nh_coeff}. Fix $p \in \NHStab_n^d(\bm\alpha)$. Thus
\[
    1 - \max_{\substack{S \subseteq [n] \\ |S| \leq k}} \left|\sum_{j \in S} (\kappa_j-\alpha_j)\right| \geq 1 - \|\bm\kappa - \bm\alpha\|_1
\]
and \cref{cor:strong_bounds_stable} (via \cref{cor:one_term_cap_bound}) imply
\[
    \cpc_{\bm\kappa}(p) \geq (1 - \|\bm\kappa - \bm\alpha\|_1)^n,
\]
which completes the proof. The tightness claim follows from \cref{lem:tightness}.

\subsection{Examples and Tightness} \label{sec:examples}

The $\delta$ parameters in \cref{thm:two_term_cap_bound} are tight, and this is shown in \cref{ex:delta_tight}. However, the $\epsilon$ parameters are not, as shown in \cref{ex:epsilon_tight}. \cref{ex:delta_tight} also demonstrates tightness of the dependence on the error parameter for some of our results, and we state this formally now.

\begin{lemma} \label{lem:tightness}
    The dependence on $(1 - \|\bm\kappa - \nabla p(\bm{1})\|_1)$ in \cref{thm:main_nh} and \cref{cor:main_nh_coeff} and the dependence on $\epsilon$ in \cref{thm:main_imp_prob_bound} are all tight for any fixed $\bm\kappa > \bm{0}$.
\end{lemma}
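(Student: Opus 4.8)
The plan is to exhibit, for each fixed $\bm\kappa \in \Z^n$ with $\bm\kappa > \bm{0}$ and each $\epsilon \in (0,1)$, a single real stable polynomial that simultaneously witnesses all three tightness claims; this is the bipartitioned-forest (``staircase'') example of \cref{ex:delta_tight}, which I would take to be
\[
    p_{\bm\kappa,\epsilon}(\bm{x}) := \left(\prod_{i=1}^n x_i^{\kappa_i-1}\right)\left(\prod_{j=1}^{n-1}\bigl(\epsilon\, x_j + (1-\epsilon)\, x_{j+1}\bigr)\right)\bigl(\epsilon\, x_n + (1-\epsilon)\bigr).
\]
This is a product of monomials and affine linear forms with non-negative coefficients, hence real stable. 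A direct computation gives $p_{\bm\kappa,\epsilon}(\bm{1}) = 1$ and $\nabla p_{\bm\kappa,\epsilon}(\bm{1}) = (\kappa_1 - 1 + \epsilon,\, \kappa_2,\, \ldots,\, \kappa_n)$, so that $\bm\kappa - \nabla p_{\bm\kappa,\epsilon}(\bm{1}) = (1-\epsilon, 0, \ldots, 0)$ and $\|\bm\kappa - \nabla p_{\bm\kappa,\epsilon}(\bm{1})\|_1 = 1-\epsilon$, which can be made arbitrarily close to $1$; one also checks every $\delta_k$ of \cref{thm:two_term_cap_bound} equals $\epsilon$ for this polynomial. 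The $n=1$ case recovers the univariate example of \cref{sec:univ_case} with $k = \kappa_1 - 1$ and $a = \epsilon$.

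For \cref{thm:main_nh}, the monomial prefactor cancels in the capacity quotient, and distributing the denominator $x_1 \cdots x_n$ as the single factor $x_j$ into the $j$-th affine form yields
\[
    \cpc_{\bm\kappa}(p_{\bm\kappa,\epsilon}) = \inf_{\bm{x} > 0}\ \prod_{j=1}^{n-1}\Bigl(\epsilon + (1-\epsilon)\tfrac{x_{j+1}}{x_j}\Bigr)\Bigl(\epsilon + (1-\epsilon)\tfrac{1}{x_n}\Bigr).
\]
The map $\bm{x} \mapsto (x_2/x_1, x_3/x_2, \ldots, x_n/x_{n-1}, 1/x_n)$ is a bijection of $\R_{>0}^n$ onto itself, so this infimum equals $\inf_{\bm{u}>0}\prod_{j=1}^n(\epsilon + (1-\epsilon)u_j) = \epsilon^n$. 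Hence $\cpc_{\bm\kappa}(p_{\bm\kappa,\epsilon}) = \epsilon^n = (1 - \|\bm\kappa - \nabla p_{\bm\kappa,\epsilon}(\bm{1})\|_1)^n$, which is exact tightness of \cref{thm:main_nh} (and of \cref{thm:two_term_cap_bound}, attained at $\ell = 0$). Next, a short combinatorial argument extracts the coefficient: to build $\bm{x}^{\bm\kappa}$ one takes $x_i^{\kappa_i-1}$ from the prefactor, and since the first affine form is the only one containing $x_1$ it must contribute $\epsilon\, x_1$, which forces the second affine form to contribute $\epsilon\, x_2$, and so on down to the last form contributing $\epsilon\, x_n$; this unique selection has coefficient $\epsilon^n$, so $(p_{\bm\kappa,\epsilon})_{\bm\kappa} = \epsilon^n$. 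As the bound of \cref{cor:main_nh_coeff} reads $(p_{\bm\kappa,\epsilon})_{\bm\kappa} \geq \bigl(\prod_i \tfrac{\kappa_i^{\kappa_i}e^{-\kappa_i}}{\kappa_i!}\bigr)\epsilon^n$ with $\bm\kappa$-dependent constant strictly in $(0,1)$, the exponent $n$ on $(1 - \|\bm\kappa - \nabla p(\bm{1})\|_1)$ cannot be improved.

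For \cref{thm:main_imp_prob_bound} I would realize $p_{\bm\kappa,\epsilon}$ as a specialization of a strongly Rayleigh generating polynomial by polarizing: replace each occurrence of $x_i$ among the monomial and affine factors by a fresh variable, producing a multiaffine polynomial $P$ with non-negative coefficients which is still real stable (each factor is affine with non-negative coefficients, and a product of real stable polynomials is real stable) and satisfies $P(\bm{1}) = 1$; thus $P$ generates a strongly Rayleigh distribution $\mu$ on its variable set. Let $B_i$ collect the fresh variables coming from $x_i$ (these are disjoint) and set $A_i := |T \cap B_i|$ for $T \sim \mu$; substituting all variables of $B_i$ back to $x_i$ recovers $p_{\bm\kappa,\epsilon}$, so $\EE{\mu}{A_i} = \partial_{x_i} p_{\bm\kappa,\epsilon}(\bm{1})$, whence $\EE{\mu}{A_S} - \kappa_S$ equals $-(1-\epsilon)$ if $1 \in S$ and $0$ otherwise; in particular $|\EE{\mu}{A_S} - \kappa_S| \le 1-\epsilon$ for every $S \subseteq [n]$. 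Moreover $\PP{\mu}{A_1 = \kappa_1, \ldots, A_n = \kappa_n}$ is exactly the coefficient $(p_{\bm\kappa,\epsilon})_{\bm\kappa} = \epsilon^n$, while the bound of \cref{thm:main_imp_prob_bound} is $\epsilon^n\prod_{\kappa_i>0}\tfrac{1}{e\sqrt{\kappa_i}}$, a fixed multiple of $\epsilon^n$; hence the dependence on $\epsilon$ is tight. The only part demanding genuine care is this polarization step — one must check it preserves real stability and faithfully transports the expectations $\EE{\mu}{A_S}$ from the gradient of $p_{\bm\kappa,\epsilon}$; the remaining verifications are routine calculations on an explicit product of linear forms.
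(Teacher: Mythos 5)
Your proposal is correct and follows essentially the same approach as the paper: you use the same ``staircase'' polynomial from \cref{ex:delta_tight} with $\alpha_1 = \kappa_1 - (1-\epsilon)$ and $\alpha_i = \kappa_i$ for $i \ge 2$, verify $\cpc_{\bm\kappa}(p) = p_{\bm\kappa} = \epsilon^n$, and realize $p$ as a specialization of the generating polynomial of a strongly Rayleigh product measure via polarization. The only stylistic difference is that the paper gets $p_{\bm\kappa} = \cpc_{\bm\kappa}(p)$ in one stroke from the Newton-polytope-vertex observation, whereas you compute the capacity (via the change of variables $u_j = x_{j+1}/x_j$) and the coefficient (via the forced-selection argument) separately; both computations are correct and your write-up of the polarization step is a faithful expansion of the paper's one-line remark.
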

\begin{proof}
    Define $\alpha_1 := \kappa_1 - (1 - \epsilon) > 0$ and $\alpha_i := \kappa_i \geq 1$ for all $i \geq 2$. Let $p$ be the polynomial described by \cref{ex:delta_tight}, given explicitly by
    \[
        p(\bm{x}) = \left(\prod_{i=1}^n x_i^{\kappa_i-1}\right) \cdot \left(\prod_{i=1}^{n-1} (\epsilon x_i + (1-\epsilon) x_{i+1})\right) \cdot (\epsilon x_n + (1-\epsilon)).
    \]
    Then $\bm\kappa$ is a vertex of of the Newton polytope of $p$. Thus
    \[
        p_{\bm\kappa} = \cpc_{\bm\kappa}(p) = \prod_{k=1}^n \left(1 - \sum_{j=1}^k (\kappa_j - \alpha_j)\right) = \epsilon^n = (1 - \|\bm\kappa - \nabla p(\bm{1})\|_1)^n.
    \]
    To see that $p$ can be the probability generating polynomial for some random variables associated to a strongly Rayleigh distribution (as in \cref{thm:main_imp_prob_bound}), note that the polarization of $p$ is real stable and gives the probability generating polynomial for such a strongly Rayleigh distribution.
\end{proof}

\begin{example} \label{ex:delta_tight}
    Fix any $\bm\kappa \in \Z_{\geq 0}^n$ and $\bm\alpha \in \R_{\geq 0}^n$ such that $\kappa_j - \alpha_j \geq 0$ for all $j \in [n]$ and $\|\bm\kappa - \bm\alpha\|_1 < 1$. Thus $\kappa_j \geq 1$ for all $j \in [n]$. For $d = \|\bm\kappa\|_1$, consider the matrix $M = \left[\begin{smallmatrix} A \\ B \end{smallmatrix}\right]$, where $A$ is the $(\|\bm\kappa\|-n) \times (n+1)$ matrix given by
    \[
        A = \begin{bmatrix}
            \bm{1}_{\kappa_1-1} \bm{e}_1^\top \\
            \bm{1}_{\kappa_2-1} \bm{e}_2^\top \\
            \vdots \\
            \bm{1}_{\kappa_n-1} \bm{e}_n^\top
        \end{bmatrix},
    \]
    and $B$ is the $n \times (n+1)$ matrix for which
    \[
        b_{kk} = 1 - \sum_{j=1}^k (\kappa_j - \alpha_j), \qquad b_{k,k+1} = \sum_{j=1}^k (\kappa_j - \alpha_j),
    \]
    for all $k \in [n]$ and $b_{ij} = 0$ otherwise. Note that every row sum of $A$ is equal to 1, and the row sums of $B$ are given by
    \[
        \sum_{j=1}^{n} b_{kj} = 1 - \sum_{j=1}^k (\kappa_j - \alpha_j) + \sum_{j=1}^k (\kappa_j - \alpha_j) = 1
    \]
    for all $k \in [n]$. The column sums of $M$ are then given by
    \[
        \sum_{i=1}^d m_{ik} = (\kappa_k-1) + 1 - \sum_{j=1}^k (\kappa_j - \alpha_j) + \sum_{j=1}^{k-1} (\kappa_j - \alpha_j) = \kappa_k - (\kappa_k - \alpha_k) = \alpha_k
    \]
    for all $k \in [n]$. Thus $M \in \NHMat_n^d(\bm\alpha)$. Let $p$ be the polynomial associated to $M$, and let $q$ be the polynomial associated to $B$. We then have $\cpc_{\bm\kappa}(p) = \cpc_{\bm{1}}(q)$. Note that $\bm{1}$ is a vertex of the Newton polytope of $q$, and thus
    \[
        \cpc_{\bm\kappa}(p) = \cpc_{\bm{1}}(q) = \prod_{k=1}^n b_{kk} = \prod_{k=1}^n \left(1 - \sum_{j=1}^k (\kappa_j - \alpha_j)\right).
    \]
    By possibly permuting the variables to put $\kappa_j - \alpha_j$ in non-increasing order, this is precisely the lower bound guaranteed by \cref{thm:two_term_cap_bound}.
\end{example}

\begin{example} \label{ex:epsilon_tight}
    Consider the case that $\bm\kappa = \bm{0}$ and $\|\bm\alpha\|_1 < 1$. Given any $d$, let $M$ be any extreme point of $\NHMat_n^d(\bm\alpha)$. Then the column sum of column $n+1$ of $M$ is equal to $d-\sum_{j=1}^n \alpha_j > d-1$. Since every row sum equals $1$, all entries of column $n+1$ of $M$ are strictly positive. Since $M$ is an extreme point, this further implies that each column of $M$ has at most $1$ positive entry except column $n+1$. Letting $p$ be the polynomial associated to $M$, there exists a partition $S_1 \sqcup \cdots \sqcup S_k = [n]$ such that
    \[
        p(\bm{x}) = \prod_{i=1}^k \left(\left(\sum_{j \in S_i} \alpha_j x_j\right) + \left(1 - \sum_{j \in S_i} \alpha_j\right)\right).
    \]
    Thus by \cref{lem:prod_sum},
    \[
        \cpc_{\bm{0}}(p) = p_{\bm{0}} = \prod_{i=1}^k \left(1 - \sum_{j \in S_i} \alpha_j\right) \geq 1 - \sum_{j=1}^n \alpha_j.
    \]
    By \cref{lem:log-concave_cap}, this gives a lower bound on the capacity of every $p \in \NHProd_n^d(\bm\alpha)$. However, this lower bound is strictly better than the one guaranteed by \cref{thm:two_term_cap_bound}.
    
    As a note, this can be partially remedied by removing all $\kappa_j = 0$ columns at the same time (i.e., adjusting \cref{lem:bound_kappa_0} to remove many columns at once). However, it is currently unclear how to inductively do this correctly.
\end{example}

\section{Uniqueness of Permanent Minimizers} \label{sec:unique-minimizers}

In this section, let $\Mat_n(\bm{c})$ be the set of $n \times n$ matrices with non-negative entries and rows sums $\bm{1}$ and column sums $\bm{c} > 0$, let $p_M(\bm{x}) := \prod_{i=1}^n \sum_{j=1}^n m_{ij} x_j$ be the real stable polynomial associated to a given $M \in \Mat_n(\bm{c})$, and let $L(\bm{c})$ be any lower bound on $\cpc_{\bm{1}}(p_M)$ over all $M \in \Mat_n(\bm{c})$ (e.g., as given by \cref{thm:main_nh} above or the results of \cite{GL21}).

We now prove \cref{thm:unique-minimizers-main} via \cref{thm:unique-min} below. We note that the argument in the proof of \cref{thm:unique-min} given below can be made into a general statement about minimizers of quadratic forms. And further, the same argument given here applies to mixed discriminants, as mentioned in \cref{sec:main-results}.

\begin{lemma} \label{lem:strictly-pos}
    If
    \[
        \frac{(n-2)^{n-2} n^{n-1}}{(n-1)^{2n-3}} > \frac{c_1 c_2 \cdots c_n}{L(\bm{c})},
    \]
    then all minimizers of the permanent over over $\Mat_n(\bm{c})$ have all strictly positive entries.
\end{lemma}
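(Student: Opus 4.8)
The plan is to argue by contradiction: suppose $M$ is a minimizer of the permanent over $\Mat_n(\bm c)$ having a zero entry, say $m_{11} = 0$. I will compare $\per(M)$ to the permanent of the rank-one matrix $R := \frac{1}{n}\bm 1 \cdot \bm c^\top \in \Mat_n(\bm c)$, whose permanent is $\frac{c_1 c_2 \cdots c_n}{n^n}\cdot n! = \frac{n!}{n^n}\, c_1\cdots c_n$. Since $M$ is a minimizer we must have $\per(M) \le \per(R) = \frac{n!}{n^n} c_1\cdots c_n$. On the other hand, $\per(M) \ge \cpc_{\bm 1}(p_M) \ge L(\bm c)$ always holds; but that alone is too weak. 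The point is to get a \emph{strictly better} lower bound on $\per(M)$ when $M$ has a zero entry, using the extra structure forced by $m_{11}=0$.

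First I would set up the following two facts. (a) The permanent is a multilinear function of the rows, and $\Mat_n(\bm c)$ is (after rescaling) a product of simplices in the rows; hence a minimizer can be assumed to be an extreme point in each coordinate subject to the column-sum constraints, and in particular the support of a minimizer is sparse. (b) Using Gurvits' capacity bound $\per(M)\ge \cpc_{\bm 1}(p_M)$ together with the definition of $L(\bm c)$, we have $\per(M)\ge L(\bm c)$ for all $M$. The real work is to show that if some $m_{ij}=0$ then in fact $\per(M)$ exceeds $\frac{n!}{n^n}c_1\cdots c_n$, which contradicts minimality whenever the displayed inequality holds. Concretely, I expect the argument to run: expand $\per(M)$ along the first row, $\per(M)=\sum_{j} m_{1j}\per(M^{(1,j)})$ where $M^{(1,j)}$ is the $(n-1)\times(n-1)$ minor; each minor permanent is bounded below by its capacity, which by the results quoted earlier (the $(n-1)$-variable version of \cref{thm:main_nh} / the $\NHProd$ bounds, applied to the polynomial of $M^{(1,j)}$) is at least $L$ of the appropriate column-sum vector; then $\sum_j m_{1j}=1$ lets one combine these. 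Comparing the resulting quantity against $\frac{n!}{n^n}c_1\cdots c_n$ produces exactly the threshold $\frac{(n-2)^{n-2}n^{n-1}}{(n-1)^{2n-3}}$, via the elementary identity $\frac{(n-1)!}{(n-1)^{n-1}}\big/\frac{n!}{n^n} = \frac{n^{n-1}}{(n-1)^{n-1}}$ and tracking how removing one row and one column changes the $c_1\cdots c_n / L(\bm c)$ ratio.

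The main obstacle, and the step I would be most careful about, is getting the constant right: one must control how the column-sum vector $\bm c$ changes when a row with a zero entry is deleted (the remaining columns no longer sum to the full $c_j$, and one column is removed entirely), and then invoke the capacity lower bound $L$ for the \emph{reduced} problem, all while the exponent bookkeeping $(n-2)^{n-2}$, $(n-1)^{2n-3}$, $n^{n-1}$ must come out exactly. I would handle this by first proving the statement for the worst placement of the zero (which by symmetry and convexity one can pin down), reducing to a one-parameter optimization, and then checking the two boundary regimes. A secondary subtlety is justifying that it suffices to rule out minimizers with zero entries on the \emph{extreme points} of $\Mat_n(\bm c)$ — this follows because the capacity function used to lower-bound the permanent is log-concave on $\Mat_n(\bm c)$ (\cref{lem:log-concave_cap}), so the infimum of the relevant bound is attained at an extreme point, and extreme points of $\Mat_n(\bm c)$ are supported on forests (\cref{lem:forest_extreme}), which immediately forces many zero entries and hence a clean reduction in dimension.
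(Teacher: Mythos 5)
Your framing is correct and agrees with the paper: argue by contradiction, compare a putative minimizer $M$ with a zero entry against the rank-one matrix $\frac{1}{n}\bm 1\cdot\bm c^\top$ of permanent $\frac{n!}{n^n}\prod_i c_i$, and show that the zero entry forces $\per(M)$ to be too large. But there is a gap at exactly the step you flag as the ``main obstacle'': you never actually produce the improved lower bound on $\per(M)$. The paper's proof is a one-liner at this point, because the needed fact is already the \emph{main theorem} of \cite{Gur07VdW} (the Schrijver-type refinement of Gurvits' capacity inequality): if $p_M$ has degree at most $n-1$ in some variable --- which is exactly what a zero entry of $M$ gives you, since column $j$ having at most $n-1$ nonzero rows means $\deg_{x_j} p_M \le n-1$ --- then
\[
\per(M) \ \ge\ \Bigl(\tfrac{n-2}{n-1}\Bigr)^{n-2}\,\frac{(n-1)!}{(n-1)^{n-1}}\;\cpc_{\bm 1}(p_M)\ \ge\ \Bigl(\tfrac{n-2}{n-1}\Bigr)^{n-2}\,\frac{(n-1)!}{(n-1)^{n-1}}\;L(\bm c),
\]
and comparing against $\frac{n!}{n^n}\prod_i c_i$ then rearranges directly to the stated threshold $\frac{(n-2)^{n-2}n^{n-1}}{(n-1)^{2n-3}}$. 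Your proposed Laplace expansion $\per(M)=\sum_j m_{1j}\per(M^{(1,j)})$ plus minor-by-minor capacity bounds is in effect an attempt to re-derive a special case of that theorem; it is not wrong in spirit, but it is uncarried, the renormalization bookkeeping for the minors' row and column sums is nontrivial, and it is strictly more work than invoking the known result. You should recognize the degree-reduction phenomenon as the already-available ingredient rather than a constant to be chased.

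A secondary issue: your closing remark about reducing to extreme points of $\Mat_n(\bm c)$ via \cref{lem:log-concave_cap} and \cref{lem:forest_extreme} does not help here and is in fact going the wrong direction. Extreme points of $\Mat_n(\bm c)$ are supported on forests and hence \emph{always} have many zero entries, so ``ruling out minimizers with zero entries among extreme points'' is not a reduction of the problem. Moreover, \cref{lem:log-concave_cap} concerns log-concavity of the capacity as a function of the matrix, not of the permanent; it tells you where the capacity is minimized, not where the permanent is. The paper's proof of this lemma uses neither extreme points nor that log-concavity statement.
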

\begin{proof}
    Note that the rank-one matrix $\frac{1}{n} \bm{1} \cdot \bm{c}^\top$ has all strictly positive entries and permanent equal to $\frac{n!}{n^n} \prod_{i=1}^n c_i$. Thus, to obtain a contradiction, let us assume that there exists $M \in \Mat_n(\bm{c})$ with at least one zero entry such that $\per(M) \leq \frac{n!}{n^n} \prod_{i=1}^n c_i$. By the main result of \cite{Gur07VdW}, we have
    \[
        \frac{n!}{n^n} \prod_{i=1}^n c_i \geq \per(M) \geq \left(\frac{n-2}{n-1}\right)^{n-2} \frac{(n-1)!}{(n-1)^{n-1}} \cdot L(\bm{c}),
    \]
    which after rearranging implies
    \[
        \frac{(n-2)^{n-2} n^{n-1}}{(n-1)^{2n-3}} > \left(\frac{n-2}{n-1}\right)^{n-2} \frac{n^{n-1}}{(n-1)^{n-1}},
    \]
    which is a contradiction.
\end{proof}


\begin{theorem} \label{thm:unique-min}
    If
    \[
        \frac{(n-2)^{n-2} n^{n-1}}{(n-1)^{2n-3}} > \frac{c_1 c_2 \cdots c_n}{L(\bm{c})},
    \]
    then the rank-one matrix $\frac{1}{n} \bm{1} \cdot \bm{c}^\top$ is the unique minimizer of the permanent over $\Mat_n(\bm{c})$.
\end{theorem}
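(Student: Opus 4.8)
The plan is to feed in \cref{lem:strictly-pos} to reduce to minimizers of $\per$ in the \emph{relative interior} of $\Mat_n(\bm c)$ — this is the only place the quantitative hypothesis is consumed — and then to show, using nothing but the multilinearity of the permanent and its invariance under permuting rows, that any such interior minimizer must have all of its rows equal. Since a rank-one (equivalently, all-rows-equal) element of $\Mat_n(\bm c)$ is forced to be $\tfrac1n\bm1\bm c^\top$ — the common row must have entry sum $1$, and from the column-sum constraint together with the identity $\sum_j c_j = n$ it must equal $\tfrac1n\bm c$ — this identifies the minimizer and gives the theorem.

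So fix an interior minimizer $M^*$ with rows $r_1^*,\dots,r_n^*$, and suppose toward a contradiction that $r_i^* \neq r_{i'}^*$ for some $i \neq i'$. Freeze the other $n-2$ rows at their $M^*$-values; then $\per$ restricted to rows $i$ and $i'$ is a bilinear form $B(r_i,r_{i'})$, whose coefficient of $(r_i)_a(r_{i'})_b$ for $a\neq b$ is the permanent of the $(n-2)\times(n-2)$ matrix obtained from $M^*$ by deleting rows $i,i'$ and columns $a,b$ — a strictly positive quantity, since $M^*$ is entrywise positive. Consider the path $M_t$ replacing row $i$ by $(1-t)r_i^*+t\,r_{i'}^*$ and row $i'$ by $t\,r_i^*+(1-t)\,r_{i'}^*$: it has row sums $\bm1$ and column sums $\bm c$ for every $t$, lies in $\Mat_n(\bm c)$ for $t\in[0,1]$, and is entrywise feasible for $t$ in an open interval containing $[0,1]$ because $M^*$ is interior. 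By multilinearity $t\mapsto\per(M_t)$ is a quadratic, and by the row-symmetry of $\per$ its values at $t=0$ and $t=1$ coincide (they are $\per(M^*)$ and $\per$ of $M^*$ with rows $i,i'$ transposed). If the leading coefficient were positive, the minimum would occur at $t=\tfrac12$, strictly below $\per(M^*)$; if it were negative, feasibility for $t$ slightly below $0$ would also beat $\per(M^*)$; both contradict optimality. Hence $\per(M_t)$ is \emph{constant} in $t$, so its leading coefficient $-B(w,w)$ vanishes, where $w:=r_i^*-r_{i'}^*$ satisfies $\sum_a w_a=0$ and $B(w,w)=\sum_{a\neq b}w_a w_b\,\mu_{ab}$ with all $\mu_{ab}>0$.

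To finish, I would argue that in the regime forced by the hypothesis the quadratic form $z\mapsto B(z,z)$ is negative definite on $\{\,z:\sum_a z_a=0\,\}$; then $B(w,w)=0$ forces $w=0$, i.e.\ $r_i^*=r_{i'}^*$, and running this over all pairs makes all rows of $M^*$ equal, completing the proof. Negative definiteness is transparent in the doubly stochastic case $\bm c=\bm1$ (where $M^*=J_n$ and all minors $\mu_{ab}$ equal one positive constant $\mu$, so $B(z,z)=\mu\sum_{a\neq b}z_az_b=-\mu\sum_a z_a^2$ on $\bm1^\perp$), and it persists for $\bm c$ — hence for any interior minimizer $M^*$ — in a neighborhood of this center, since the $\mu_{ab}$ then stay close to equal. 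The hypothesis, via the capacity lower bound $L(\bm c)$ and \cref{thm:main_nh} applied to $p_{M^*}$ and compared against $\per$ of $\tfrac1n\bm1\bm c^\top$, keeps $\bm c$ close enough to $\bm1$ for this to hold. Iterating the "average two rows" operation above — each step preserves minimality — independently shows $\tfrac1n\bm1\bm c^\top$ is itself a minimizer, which is the existence half.

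The main obstacle is exactly this last step: making quantitative how close to the center $M^*$ is forced to be, i.e.\ turning the soft statement "the hypothesis keeps $\bm c$, hence $M^*$, near the doubly stochastic center" into an explicit entrywise estimate on $M^*$ sufficient to keep every two-row form negative definite on $\bm1^\perp$ (with a continuity/compactness version yielding a qualitative $\epsilon_n$ and a hands-on estimate yielding the explicit condition). This is also precisely where the equality conditions in the Alexandrov-Fenchel inequalities are sidestepped: one needs only strict negative definiteness of the two-row forms in this regime, never a full characterization of their kernels. Everything else — the reduction via \cref{lem:strictly-pos}, the quadratic/minimality dichotomy, and the identification of $\tfrac1n\bm1\bm c^\top$ as the unique rank-one matrix in $\Mat_n(\bm c)$ — is routine, and the same principle ("an interior minimizer of a function that is bilinear in each pair of coordinate blocks is pinned down by symmetry") carries over to the mixed discriminant, with the row-/column-sum constraints replaced by trace/eigenvalue constraints.
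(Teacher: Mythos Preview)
Your argument tracks the paper's closely through the crucial step: reduce to interior minimizers via \cref{lem:strictly-pos}, fix two rows, run along the path $M_t$, and conclude that $t\mapsto\per(M_t)$ is constant. (Your quadratic/symmetry/optimality trichotomy is a perfectly good substitute for the paper's Lagrange-multiplier computation.)

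The gap is what you do \emph{after} constancy. You try to deduce $w=0$ from $B(w,w)=0$ by arguing that the two-row form $B$ is negative definite on $\bm 1^\perp$, and you correctly flag this as the ``main obstacle.'' It is a real one: the $\mu_{ab}$ are $(n-2)\times(n-2)$ sub-permanents of the \emph{unknown} minimizer $M^*$, and you have no a priori control on $M^*$ beyond entrywise positivity. Your continuity sketch (``$\bm c$ near $\bm 1$, hence $M^*$ near $J_n/n$'') is circular without first knowing something like uniqueness at $\bm c=\bm1$, and in any case does not recover the explicit hypothesis of the theorem.

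The paper avoids this entirely by using \cref{lem:strictly-pos} a \emph{second} time. Once $\per(M_t)$ is constant, note that if $r_i^*\neq r_{i'}^*$ the affine line $t\mapsto M_t$ is nonconstant, and since $\Mat_n(\bm c)$ is bounded the line must exit the polytope; let $t^*$ be a boundary point. Then $M_{t^*}\in\Mat_n(\bm c)$ has a zero entry, yet $\per(M_{t^*})=\per(M^*)$ is the minimum value, so $M_{t^*}$ is a minimizer with a zero entry---contradicting \cref{lem:strictly-pos}. You already had the ingredients for this (you used feasibility for $t$ slightly below $0$ in your trichotomy); you just needed to push the line all the way to the boundary rather than detouring through definiteness. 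This finish uses nothing quantitative beyond the single inequality already consumed by \cref{lem:strictly-pos}, which is why the theorem's hypothesis is exactly that inequality.
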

\begin{proof}
    Let $M$ be a minimizer of the permanent over $\Mat_n(\bm{c})$. Thus every entry of $M$ is positive by \cref{lem:strictly-pos}. We will show that every pair of rows of $M$ must be equal, which immediately implies $M = \frac{1}{n} \bm{1} \cdot \bm{c}^\top$.
    
    Let $P \subset \R^{2 \times n}$ be the convex polytope given by the first two rows of all matrices in $\Mat_n(\bm{c})$ with rows $3$ through $n$ equal to those of $M$. Thus by positivity, $(\bm{m}_1,\bm{m}_2)$ is in the relative interior of $P$ where $\bm{m}_i$ is the $i^\text{th}$ row of $M$. Let $f(\bm{u},\bm{v})$ be defined as the permanent of the matrix $M$ with the first two rows replaced by $(\bm{u},\bm{v})$. Since $M$ minimizes the permanent and $(\bm{m}_1,\bm{m}_2)$ is contained in the relative interior of $P$, the necessary conditions on the minimum given by Lagrange multipliers implies
    \[
        \nabla f(\bm{m}_1,\bm{m}_2) = \left(\per(M_{(i,j)})\right)_{i \in [2], j \in [n]} = (a_i + b_j)_{i \in [2], j \in [n]}
    \]
    for some $a_i, b_j$, where $M_{(i,j)}$ is the matrix $M$ with row $i$ and column $j$ deleted. That is, the gradient of $f$ points in a direction orthogonal to $P$ at $(\bm{m}_1,\bm{m}_2)$.
    
    By row symmetry of the permanent, we also have that $\nabla f(\bm{m}_2, \bm{m}_1)  = (a_i' + b_j)_{i \in [2], j \in [n]}$, where $a_1' = a_2$ and $a_2' = a_1$. By row multilinearity of the permanent, we thus have
    \[
        \nabla f(t \cdot \bm{m}_1 + (1-t) \cdot \bm{m}_2, (1-t) \cdot \bm{m}_1 + t \cdot \bm{m}_2) = (t \cdot a_i + (1-t) \cdot a_i' + b_j)_{i \in [2], j \in [n]}
    \]
    for all $t \in \R$. If $\bm{m}_1 \neq \bm{m}_2$ then the gradient of $f$ is orthogonal to $P$ at all points on a line in $P$ through $M$, and thus the permanent is minimized at all these points. At least one such point (on the boundary of $P$) has a zero entry, which contradicts \cref{lem:strictly-pos}. Therefore it must be that $\bm{m}_1 = \bm{m}_2$. Applying this argument to every pair of rows of $M$ implies the desired result.
    %
\end{proof}

We now give the example which proves \cref{prop:minimizer-counter}.

\begin{example} \label{ex:min_per}
    Fix $t > 0$ and $n \in \N$, and define $\epsilon := \frac{1}{n^{1+t}}$. Further define $\bm\alpha \in \R_{>0}^n$ and $\bm{c} \in \R_{>0}^{n+1}$ via
    \[
        \bm\alpha := \big(1-\epsilon, 1-\epsilon, \ldots, 1-\epsilon) \in \R_{>0}^n \qquad \text{and} \qquad \bm{c} := \big(1+\alpha_1, \alpha_2, \alpha_3, \ldots, \alpha_n, \sum_{j=1}^n (1-\alpha_j)\big) \in \R_{>0}^{n+1}.
    \]
    Note that $\|\bm{c}-\bm{1}\|_1 = 1 + (n-2)\epsilon + 1 - n\epsilon = 2(1-\epsilon) < 2$. We first have
    \[
        \per\left(\frac{1}{n+1} \bm{1} \cdot \bm{c}^\top\right) = \frac{(n+1)!}{(n+1)^{n+1}} (2-\epsilon) (1-\epsilon)^{n-1} n\epsilon = \frac{n!}{n^{(1+t)n}} \cdot \frac{2-n^{-1-t}}{1-n^{-1-t}} \cdot \frac{(n^{1+t}-1)^n}{n^t (n+1)^n}.
    \]
    Now consider the matrix
    \[
        M = \begin{bmatrix}
            1 & 0 & 0 & \cdots & 0 & 0 \\
            1 - \sum_{j=1}^1 (1-\alpha_j) & \sum_{j=1}^1 (1-\alpha_j) & 0 & \cdots & 0 & 0 \\
            0 & 1 - \sum_{j=1}^2 (1-\alpha_j) & \sum_{j=1}^2 (1-\alpha_j) & \cdots & 0 & 0 \\
            0 & 0 & 1 - \sum_{j=1}^3 (1-\alpha_j) & \cdots & 0 & 0 \\
            \vdots & \vdots & \vdots & \ddots & \vdots & \vdots \\
            0 & 0 & 0 & \cdots & \sum_{j=1}^{n-1} (1-\alpha_j) & 0 \\
            0 & 0 & 0 & \cdots & 1 - \sum_{j=1}^n (1-\alpha_j) & \sum_{j=1}^n (1-\alpha_j)
        \end{bmatrix}.
    \]
    Note that $M \in \Mat_{n+1}(\bm{c})$. The matrix $M$ is upper-triangular, and thus we have
    \[
        \per(M) = \prod_{k=1}^n \sum_{j=1}^k (1-\alpha_j) = \prod_{k=1}^n (k\epsilon) = n! \cdot \epsilon^n = \frac{n!}{n^{(1+t)n}}.
    \]
    We then further have
    \[
        \frac{2-n^{-1-t}}{1-n^{-1-t}} \cdot \frac{(n^{1+t}-1)^n}{n^t (n+1)^n} \approx 2n^{(n-1)t} \cdot \frac{(n - n^{-t})^n}{(n+1)^n} \geq 2n^{(n-1)t} \cdot \frac{(n - 1)^n}{(n+1)^n} \approx \frac{2n^{(n-1)t}}{e^2} > 1,
    \]
    which implies $\per(M) < \per\left(\frac{1}{n+1} \bm{1} \cdot \bm{c}^\top\right)$ for large enough $n$.
\end{example}

\paragraph{Acknowledgements.} The third author would like to thank Vijay Bhattiprolu for many helpful and interesting discussions. The second author was supported by the National Science Foundation under Grant No. DMS-1926686. The third author acknowledges the support of the Natural Sciences and Engineering Research Council of Canada (NSERC), [funding reference number RGPIN-2023-03726]. Cette recherche a \'et\'e partiellement financ\'ee par le Conseil de recherches en sciences naturelles et en g\'enie du Canada (CRSNG), [num\'ero de r\'ef\'erence RGPIN-2023-03726].
Le troisi\`eme auteur remercie le Conseil de recherches en sciences naturelles et en g\'enie du Canada (CRSNG) de son soutien, [num\'ero de r\'ef\'erence RGPIN-2023-03726].

\printbibliography

\newpage

\appendix

\section{Improving the TSP Bound}\label{sec:app}

\subsection{Further Background on TSP}

In \cite{KKO21}, instead of only looking at vertices as we did in \cref{sec:proof_app_TSP}, they consider sets of edges going between two \textit{sets} of vertices. The notation $\bbe = {\bf (u,v)}$ is used, for $\bbe \subseteq E$, ${\bf u,v} \subseteq V$. Formally, if $\bbe = {\bf (u,v)}$ then  $\bbe = \{e=\{u,v\} \in E: |e \cap {\bf u}| = |e \cap {\bf v}| = 1\}$.
They call $\bbe$ a ``top edge bundle."\footnote{They also consider ``bottom edge bundles," which have a slightly different definition, however they do not arise in the lemmas we consider.} $\bbe={\bf (u,v)}$ is only defined for sets ${\bf u,v}$ for which $2 \le x(\delta({\bf u})) \le 2+\epsilon_\eta$ and $2 \le x(\delta({\bf v})) \le 2+\epsilon_\eta$, where $\epsilon_\eta \le 10^{-10}$ is a very small constant. 

In \cite{KKO21}, the event that the sets ${\bf u}$ and ${\bf v}$ are subtrees of $T$ is ubiquitous and therefore we always work in the conditional measure $\mu' = \mu_{\mid {\bf u,v} \text{ trees}}$. By Lemma 2.23 in \cite{KKO21}, ${\bf u,v}$ are subtrees with probability at least $1-2\epsilon_\eta$, $\mu'$ is a strongly Rayleigh distribution, and if $x'_e = \PP{T \sim \mu'}{e \in T}$ then $x(F) \le x'(F) \le x(F) + 2\epsilon_\eta$ for any $F \subseteq \delta({\bf u}) \cup \delta({\bf v})$.  Thus, since $\epsilon_\eta$ is such a small constant, there is very little difference between $\mu$ and $\mu'$.

For a top edge bundle $\bbe = {\bf (u,v)}$, they partition the edges of ${\bf u}$ into sets $A,B,C$. This is called the ``$(A,B,C)$ degree partition" of ${\bf u}$ (this is analogously done for ${\bf v}$). The details of this operation are not relevant in this work. 
What is relevant is that in the measure $\mu'$ we have the bounds
\begin{equation}	
\begin{aligned}
	x'(A),x'(B) \in [1-\eps_{1,1},1+3\eps_\eta],\\
	x'(C)\leq 2\eps_{1/1}+3\eps_\eta. 
\end{aligned}\label{eq:ABCDegParx}
\end{equation}
where $\eps_{1/1} = \epsilon_{1/2}/12$ and $\epsilon_{1/2} = 0.0002$ are small constants that will be relevant in this section.\footnote{Note in \cite{KKO21}, these bounds are given with respect to $x$, not $x'$. We use $x'$ for convenience since as noted everything is done in the conditional measure $\mu'$.} $\bbe$ is called a \textbf{half edge bundle} if $\frac{1}{2}-\epsilon_{1/2} \le x(\bbe) \le \frac{1}{2}+\epsilon_{1/2}$. As already observed in \cref{sec:proof_app_TSP}, these edge bundles are particularly difficult to deal with. Thus they are the subject of many probabilistic statements in \cite{KKO21}.

The final relevant definition is the event that a top edge bundle $\bbe={\bf (u,v)}$ is ``2-1-1 happy with respect to ${\bf u}$." Where for sets of edges $F,T \subseteq E$ we let $F_T = |F \cap T|$, this indicates that 
$$A_T=1,B_T=1,C_T=0,\delta({\bf v})_T=2, {\bf u,v} \text{ trees}$$
where $A,B$ and $C$ are the $(A,B,C)$ degree partition of ${\bf u}$. The event ``2-1-1 happy with respect to ${\bf v}$" is analogous, and also relevant, however since the assumptions on the partitions of ${\bf u}$ and ${\bf v}$ are the same we do not distinguish between them in this work.

The lemmas we improve all involve showing that the event that an edge is 2-1-1 happy (with respect to ${\bf u}$ or ${\bf v}$) has a non-negligible probability. 

We will require one additional fact about our spanning tree distribution, which is a simple consequence of negative association and homogeneity: 
\begin{lemma}[Explained in e.g. Section 2 of \cite{KKO21}]\label{lem:NAhomogeneity}
    Let $\mu:2^E \to \R_{\ge 0}$ be a strongly Rayleigh distribution over spanning trees with $x_e = \PP{T \sim \mu}{e \in T}$ as above. Let $A,B \subseteq E$ be disjoint sets. Then, $\mathbb{E}_{T \sim \mu}[A_T \mid B_T = 0] \le x(A) + x(B)$. Furthermore, the conditional measure $\mu_{\mid B_T=0}$ is strongly Rayleigh. Similarly, for an edge $e$, we can condition $e \in T$ and the measure $\mu_{\mid e \in T}$ is SR, and for any set $A \subseteq E$ with $e \not\in A$ we have $x(A) - x_e \le \EE{T \sim \mu}{A_T \mid e \in T} \le x(A)$. 
\end{lemma}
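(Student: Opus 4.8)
The plan is to reduce all three assertions to two standard facts about strongly Rayleigh (SR) distributions — closure under conditioning on the presence/absence of a single element, and negative association — combined with the \emph{homogeneity} of $\mu$: every spanning tree of a connected graph on $|V|$ vertices has exactly $|V|-1$ edges, so $\sum_{e} x_e = |V|-1$ and, for $T\sim\mu$, the total size $A_T + (E\setminus A)_T$ is the deterministic constant $|V|-1$.

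First I would settle the closure (and hence SR/negative-association) claims via the generating polynomial $g_\mu(\bm z)=\sum_S \mu(S)\bm z^S$. The generating polynomial of $\mu_{\mid B_T = 0}$ is, up to normalization, $g_\mu$ with $z_b$ set to $0$ for each $b\in B$, and setting variables to $0$ preserves real stability for polynomials with non-negative coefficients; hence $\mu_{\mid B_T=0}$ is SR. Writing $g_\mu = z_e\, h(\bm z_{-e}) + (\text{terms without }z_e)$, one has $h = \partial_{z_e} g_\mu$ and the generating polynomial of $\mu_{\mid e\in T}$ is $h/h(\bm 1)$; since real stability is preserved under partial derivatives, $\mu_{\mid e\in T}$ is SR. In particular both conditional measures are negatively associated, which is what we will actually use.

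For the expectation bound $\mathbb{E}_\mu[A_T\mid B_T=0]\le x(A)+x(B)$, set $\mu' = \mu_{\mid B_T=0}$ and $x'_e = \mathbb{P}_{\mu'}[e\in T]$. Applying negative association to the increasing indicators $\mathbb{1}[e\in T]$ (for $e\notin B$) and $\mathbb{1}[B_T\ge 1]$ gives $x'_e\ge x_e$ for all $e\notin B$. Since $\mu'$ is still supported on spanning trees, $\sum_{e\notin B} x'_e = |V|-1 = \sum_e x_e$, so $\sum_{e\notin B}(x'_e - x_e) = x(B)$; because $A\subseteq E\setminus B$ and every term is non-negative, $\sum_{e\in A}(x'_e - x_e)\le x(B)$, i.e. $x'(A)\le x(A)+x(B)$. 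The two bounds for $\mathbb{E}_\mu[A_T\mid e\in T]$ then follow quickly: the upper bound $\le x(A)$ is negative association applied to $\mathbb{1}[e\in T]$ and $A_T$, and the lower bound follows by inserting the $B=\{e\}$ case $\mathbb{E}_\mu[A_T\mid e\notin T]\le x(A)+x_e$ into the law of total expectation $x(A) = x_e\,\mathbb{E}_\mu[A_T\mid e\in T] + (1-x_e)\,\mathbb{E}_\mu[A_T\mid e\notin T]$ and rearranging.

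I expect the only real obstacle to be bookkeeping rather than any genuine difficulty: one must keep careful track of which indicator events are increasing versus decreasing when invoking negative association, and one must apply homogeneity at the right moment so that the slack (the quantity $x(B)$, or $1-x_e$ in the last step) is attributed to the complement of $A$ and not to $A$ itself. Once the closure and negative-association facts are in hand, each inequality is a short computation.
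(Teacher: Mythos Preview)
Your approach is the standard one (closure of real stability under specialization and differentiation, negative association, plus homogeneity of the spanning-tree support), and it is exactly what the paper is pointing to when it cites Section~2 of \cite{KKO21} rather than giving a proof. The argument for $\mathbb{E}_\mu[A_T\mid B_T=0]\le x(A)+x(B)$ and for the upper bound $\mathbb{E}_\mu[A_T\mid e\in T]\le x(A)$ is correct as written.

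There is, however, a concrete issue with the final step. Carry out the ``rearranging'' you describe: from $x(A)=x_e\,\mathbb{E}[A_T\mid e\in T]+(1-x_e)\,\mathbb{E}[A_T\mid e\notin T]$ and $\mathbb{E}[A_T\mid e\notin T]\le x(A)+x_e$ you get
\[
x_e\,\mathbb{E}[A_T\mid e\in T]\ \ge\ x(A)-(1-x_e)(x(A)+x_e)\ =\ x_e\bigl(x(A)-(1-x_e)\bigr),
\]
hence $\mathbb{E}[A_T\mid e\in T]\ge x(A)-(1-x_e)$, \emph{not} $x(A)-x_e$. The same bound falls out of the direct homogeneity argument (marginals drop by a nonnegative amount summing to $1-x_e$). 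In fact the inequality $x(A)-x_e\le \mathbb{E}[A_T\mid e\in T]$ as literally stated can fail: take two vertices joined by four parallel edges with the uniform spanning-tree measure, $x_e=1/4$ for each edge; conditioning on one edge forces the other three out, so with $A$ the other three edges one gets $\mathbb{E}[A_T\mid e\in T]=0<1/2=x(A)-x_e$. So your method is sound, but it proves the (correct) bound $x(A)-(1-x_e)$; the discrepancy is a typo in the lemma's statement rather than a defect in your argument, and you should flag it rather than claim your rearrangement yields $x(A)-x_e$.
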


\subsection{Using Capacity to Improve Probabilistic Bounds} \label{sec:cap-prob-improvements}

\cref{thm:mainprobbound} and \cref{cor:mainprobbound_lite} can be applied straightforwardly to strengthen and simplify the proof of a number of lemmas in \cite{KKO21}. In this section we demonstrate this for Lemmas 5.21 and 5.22, as they are bottlenecks for the approximation factor. However, they can also be applied to simplify and improve the factor for Lemma 5.7 and achieve simpler proofs for Lemmas 5.16 and 5.17 at the price of a slightly worse constant. (Of course, with more effort, one can likely use these statements to strengthen the bounds for these as well similar to what is done for Lemma 5.24.) 

We then show that Lemma 5.23 can be easily strengthened using existing arguments. Finally, we use \cref{cor:mainprobbound_lite} to reduce Lemma 5.24 to a special case in which ad hoc methods can further improve the bound.


\subsubsection{Improving Lemma 5.21}

Here we improve upon the  bound of $0.005\eps_{1/2}^2$ via a straightforward application of our new capacity bound, \cref{cor:mainprobbound_lite}.
\begin{lemma}[Lemma 5.21 of \cite{KKO21}]\label{lem:new521}
Let $\bbe={\bf (u,v)}$ be a top edge bundle such that $x(\bbe)\leq 1/2-\eps_{1/2}$. If $\eps_{1/2}\leq 0.001$, $\epsilon_{1/1} \le \epsilon_{1/2}/12$, and $\epsilon_\eta \le \epsilon_{1/2}^2$ then $\bbe$ is 2-1-1 happy with probability at least $0.039\eps_{1/2}^2$.
\end{lemma}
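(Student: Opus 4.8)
The plan is to apply \cref{cor:mainprobbound_lite} to the conditional measure $\mu' = \mu_{\mid \mathbf{u},\mathbf{v}\text{ trees}}$, using the $(A,B,C)$ degree partition of $\mathbf{u}$. First I would set up the random variables: for $T \sim \mu'$, let $A_1 = C_T$, $A_2 = A_T$, $A_3 = B_T$, and $A_4 = (\delta(\mathbf{v}))_T$, which are counts of elements in the disjoint edge sets $C$, $A$, $B$, and $\delta(\mathbf{v}) \setminus (A \cup B \cup C)$ — wait, one must be careful that $\delta(\mathbf{v})$ may overlap $\delta(\mathbf{u})$ via the bundle edges $\mathbf{e}$; I would handle this by instead partitioning into the genuinely disjoint pieces ($A$, $B$, $C$, and $\delta(\mathbf{v})$ with the bundle edges appropriately accounted for), then taking the event that $\mathbf{e}$ is 2-1-1 happy with respect to $\mathbf{u}$, i.e. $A_T = 1$, $B_T = 1$, $C_T = 0$, $(\delta(\mathbf{v}))_T = 2$. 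So $\bm\kappa = (0,1,1,2)$ in the appropriate ordering. Since $\mu'$ is strongly Rayleigh by \cref{lem:NAhomogeneity}, the hypotheses of \cref{cor:mainprobbound_lite} are in reach once we bound the relevant expectations.

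The key computation is bounding $\EE{\mu'}{A_S} - \kappa_S$ for every subset $S$ of the four indices, then reading off $\epsilon_k = 1 - \max_{|S| \le k} |\EE{\mu'}{A_S} - \kappa_S|$. Using \eqref{eq:ABCDegParx}, we have $x'(A), x'(B) \in [1 - \epsilon_{1/1}, 1 + 3\epsilon_\eta]$ and $x'(C) \le 2\epsilon_{1/1} + 3\epsilon_\eta$, and from $x(\mathbf{e}) \le 1/2 - \epsilon_{1/2}$ together with $x \le x' \le x + 2\epsilon_\eta$ and $x'(\delta(\mathbf{v})) \in [2, 2 + \epsilon_\eta]$ one controls $\EE{\mu'}{(\delta(\mathbf{v}))_T}$. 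The single-element deviations are: for $C$, about $2\epsilon_{1/1} + O(\epsilon_\eta)$; for $A$ and $B$, about $\epsilon_{1/1} + O(\epsilon_\eta)$ on the low side; for $\delta(\mathbf{v})$, about $O(\epsilon_\eta)$. Crucially the largest deviation appears when $S$ includes $\delta(\mathbf{v})$ and the bundle-related discrepancy, which should be on the order of $\epsilon_{1/2}$ — this is where the $\epsilon_{1/2}^2$ in the final bound comes from, since the product $\prod_{k=1}^4 \epsilon_k$ will contain one factor that is $\Theta(\epsilon_{1/2})$ and the rest $\Theta(1)$; I'd expect actually two factors of order $\epsilon_{1/2}$ given the stated $0.039\epsilon_{1/2}^2$, presumably one from the low degree of $C$ forcing an edge into $\delta(\mathbf{v})$ and one from $x(\mathbf{e})$ being bounded away from $1/2$. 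After plugging in, \cref{cor:mainprobbound_lite} gives
\[
    \PP{\mu'}{A_T = 1, B_T = 1, C_T = 0, (\delta(\mathbf{v}))_T = 2} \ \ge\ \prod_i \frac{\kappa_i^{\kappa_i} e^{-\kappa_i}}{\kappa_i!} \cdot \prod_{k=1}^4 \epsilon_k,
\]
where $\prod_i \frac{\kappa_i^{\kappa_i}e^{-\kappa_i}}{\kappa_i!} = 1 \cdot e^{-1} \cdot e^{-1} \cdot \frac{4e^{-2}}{2} = 2e^{-4}$.

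Finally I would convert from $\mu'$ back to $\mu$: the event ``$\mathbf{u},\mathbf{v}$ are trees'' has probability at least $1 - 2\epsilon_\eta$, so $\PP{\mu}{2\text{-}1\text{-}1\text{ happy}} \ge (1 - 2\epsilon_\eta) \cdot \PP{\mu'}{\cdots}$, which is absorbed into the constant since $\epsilon_\eta \le \epsilon_{1/2}^2$ is tiny. Then I would carefully track the constants: using $\epsilon_{1/2} \le 0.001$, $\epsilon_{1/1} \le \epsilon_{1/2}/12$, and $\epsilon_\eta \le \epsilon_{1/2}^2$, one shows the product $\prod_{k=1}^4 \epsilon_k$ is at least (some explicit constant) times $\epsilon_{1/2}^2$, and combined with $2e^{-4} \approx 0.0366$ and the $(1 - 2\epsilon_\eta)$ factor, the final bound clears $0.039\epsilon_{1/2}^2$.

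The main obstacle I anticipate is not the capacity machinery but the bookkeeping for the expectations: one must correctly identify which subset $S$ maximizes $|\EE{\mu'}{A_S} - \kappa_S|$, account for the interaction between $\delta(\mathbf{v})$ and the bundle edges $\mathbf{e}$ (and the $(A,B,C)$ partition of $\mathbf{v}$ which the statement says we may treat symmetrically), and verify that the slack coming from $x(\mathbf{e}) \le 1/2 - \epsilon_{1/2}$ really does propagate through as the claimed $\Theta(\epsilon_{1/2}^2)$ rather than $\Theta(\epsilon_{1/2})$ or worse. Getting the explicit constant $0.039$ (as opposed to merely $\Omega(\epsilon_{1/2}^2)$) will require being somewhat careful with the $e^{-4}$ factor and not being wasteful in the union-bound-style estimates; I would likely use \cref{lem:prod_sum} or direct estimates rather than crude bounds to preserve the constant.
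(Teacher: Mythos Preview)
Your overall strategy---apply \cref{cor:mainprobbound_lite} to a strongly Rayleigh conditioning of $\mu$---is right, but the execution diverges from the paper in two places, and the first of these is a genuine gap.

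\textbf{Disjointness is not resolved.} You acknowledge that $\delta(\mathbf{v})$ overlaps $A\cup B\cup C$ along the bundle $\mathbf{e}$, and say you would ``partition into genuinely disjoint pieces.'' But the 2-1-1 happy event $A_T=1,\ B_T=1,\ C_T=0,\ (\delta(\mathbf{v}))_T=2$ is \emph{not} a single event of the form $\{A_i=\kappa_i\ \forall i\}$ on any disjoint partition: once you split off $\mathbf{e}$ from both sides, the target becomes a union of several $\bm\kappa$-events depending on how many bundle edges land in $T$. The paper avoids this by \emph{first conditioning} on $C_T=0$ and on the bundle edges (outside $C$) being absent from $T$. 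This conditioning happens with probability at least $1/2$, and afterwards $A$, $B$, and $V=\delta(\mathbf{v})$ become genuinely disjoint random variables, so \cref{cor:mainprobbound_lite} applies with only three variables and $\bm\kappa=(1,1,2)$. The paper then just reuses the expectation bounds from the proof in \cite{KKO21}.

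\textbf{The $V_T-1$ trick.} The paper also observes that since $V$ is a cut in a spanning tree, $V_T\ge 1$ deterministically; hence one can replace $V_T$ by $V_T-1$ and take $\bm\kappa=(1,1,1)$. This improves the Stirling-type prefactor from your $2e^{-4}\approx 0.0366$ to $e^{-3}\approx 0.0498$. Combined with the extra $\tfrac12$ for the conditioning and $\epsilon_1=0.5,\ \epsilon_2=1.8\epsilon_{1/2},\ \epsilon_3=1.75\epsilon_{1/2}$, this gives $0.5\cdot e^{-3}\cdot 0.5\cdot 1.8\cdot 1.75\,\epsilon_{1/2}^2\ge 0.0392\,\epsilon_{1/2}^2$. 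Your four-variable setup with prefactor $2e^{-4}$ already undershoots $0.039$ before multiplying by $\prod_{k}\epsilon_k\le 1$, so as written your route cannot reach the stated constant; the $V_T-1$ observation is what makes the numbers work.
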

\begin{proof}
Consider the $(A,B,C)$ degree partitioning of ${\bf u}$ (the case for ${\bf v}$ is exactly the same). We first condition on $\mu'$, $C_T = 0$ and $((A \cap C) \cup (B \cap C))_T = 0$. Let $V = \delta({\bf u})$. We now want to bound the probability that $A_T=B_T=1$ and $V_T=2$ in the resulting measure $\nu$. As we are working in the same setting as the proof of Lemma 5.21 in \cite{KKO21}, we re-use their bounds on the expectations using \cref{lem:NAhomogeneity} which ensures that they do not change too much:
\[
\begin{array}{rcccl}
    -0.5 & \leq & \E_\nu[A_T] - 1 & \leq & 0.5 \\
    -0.5 & \leq & \E_\nu[B_T] - 1 & \leq & 0.5 \\
    -0.5 & \leq & \E_\nu[V_T] - 2 & \leq & 0.01 \\
    -0.5 & \leq & \E_\nu[A_T + B_T] - 2 & \leq & 0.01 \\
    -1 + 1.8 \epsilon_{1/2} & \leq & \E_\nu[A_T + V_T] - 3 & \leq & 0.01 \\
    -1 + 1.8 \epsilon_{1/2} & \leq & \E_\nu[B_T + V_T] - 3 & \leq & 0.01 \\
    -1 + 1.75 \epsilon_{1/2} & \leq & \E_\nu[A_T + B_T + V_T] - 4 & \leq & -0.49 \\
\end{array}
\]
Now, note that since $V$ defines a cut in the graph and we have a distribution over spanning trees, $V_T \ge 1$ with probability 1. Therefore we can consider the random variable $V_T - 1$ instead (this corresponds to dividing the generating polynomial by the variable representing $V_T$). Thus in \cref{cor:mainprobbound_lite} we may choose (considering $\kappa=(1,1,1)$ on the variables $A_T,B_T,V_T-1$):
\[
\begin{array}{ccc}
    \epsilon_1 = 0.5, \qquad \epsilon_2 = 1.8 \epsilon_{1/2}, \qquad \epsilon_3 = 1.75 \epsilon_{1/2}
\end{array}
\]
which implies (using that $\nu$ occurs with probability at least $1/2$):
\[
    0.5 \cdot \Prob_\nu\left[A_T = 1, B_T = 1, V_T = 2\right] \geq 0.5 \cdot e^{-3} \cdot 0.5 \cdot 1.8 \epsilon_{1/2} \cdot 1.75 \epsilon_{1/2} \ge 0.0392 \cdot \epsilon_{1/2}^2.
\]
as desired.
\end{proof}

We also note that this bound is tight up to the constant factor for strongly Rayleigh distributions (where one can set $\gamma \approx \epsilon_{1/2}$):
\begin{lemma}
    There is a strongly Rayleigh distribution $\mu$ for which $x(A)=x(B)=1, x(V)=2$ and $x((A \cap V) \cup (B \cap V)) = \frac{1}{2}-\gamma$ such that 
    $$\PP{T \sim \mu}{A_T=1, B_T=1, V_T=2} = (1+2\gamma)\gamma^2$$
\end{lemma}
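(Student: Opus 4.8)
The plan is to construct an explicit strongly Rayleigh distribution witnessing the claimed equality. The natural candidate is a product distribution, since products of strongly Rayleigh measures are strongly Rayleigh and they realize the extremal polynomials of the form appearing in \cref{lem:tightness} and \cref{ex:epsilon_tight}. Concretely, I would take three ``blocks'' of edges: one block realizing $A$ partially, one realizing $B$ partially, and one ``shared'' block whose edges lie in $(A\cap V)\cup(B\cap V)$, so that the sole interaction between the $A$-count, the $B$-count, and the $V$-count runs through this shared block. First I would set up the edge set $E$ so that $A$ and $B$ are disjoint (as required), $V = \delta(\mathbf u)$, and $(A\cap V)\cup(B\cap V)$ is exactly the shared block; I would then fix a product measure over these blocks in which each block independently selects at most one edge, with the marginals chosen so that $x(A) = x(B) = 1$, $x(V) = 2$, and $x((A\cap V)\cup(B\cap V)) = \tfrac12 - \gamma$.

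Next I would compute the generating polynomial of $\mu$ after collapsing each block to a single variable: $A_T$ is the sum of an ``exclusive-$A$'' indicator and half of the shared block's contribution, and similarly for $B_T$, while $V_T$ equals the shared contribution plus whatever other edges of $V$ are forced. The key design choice is to make $V_T - 1 \geq 0$ almost surely (since $V$ is a cut in a spanning-tree-type picture this is automatic in the TSP setting, but for a bare product construction I would instead just arrange the marginals so the ``$-1$'' shift is built in), and then the event $\{A_T = 1, B_T = 1, V_T = 2\}$ factors as a product over the independent blocks. With the shared block contributing an edge with probability $\tfrac12 - \gamma$ and the two exclusive blocks contributing their edges with the complementary probabilities needed to hit the marginals $x(A)=x(B)=1$, a direct multiplication should collapse to $(1+2\gamma)\gamma^2$; the factor $(1+2\gamma)$ will come from the fact that the exclusive-$A$ and exclusive-$B$ marginals are forced to be $\tfrac12+\gamma$ each while the ``extra'' mass in $V$ beyond the shared block is small, and carefully bookkeeping these gives the stated product.

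After the computation I would verify the three required marginal constraints $x(A) = x(B) = 1$, $x(V) = 2$, $x((A\cap V)\cup(B\cap V)) = \tfrac12-\gamma$ by summing edge probabilities block by block, and verify strong Rayleigh-ness by noting the measure is a product of (trivially strongly Rayleigh) ``choose at most one element'' measures, hence strongly Rayleigh by closure of the strongly Rayleigh class under products. Finally I would match the resulting probability against the bound $0.0392\,\epsilon_{1/2}^2$ from \cref{lem:new521} with $\gamma \approx \epsilon_{1/2}$, observing $(1+2\gamma)\gamma^2 \approx \gamma^2$, which shows the earlier bound is tight up to the constant factor $e^{3}\cdot 0.5 \cdot 1.8 \cdot 1.75$ lost in the capacity estimate.

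The main obstacle I anticipate is arranging the block marginals so that \emph{all three} of $x(A)$, $x(B)$, $x(V)$ hit their prescribed values simultaneously while keeping the distribution a genuine product and keeping $V_T \geq 1$ (or the shifted variable $V_T - 1 \geq 0$) — there is a mild tension because raising the shared-block probability helps $x(V)$ but competes with the exclusive blocks for the mass needed in $x(A),x(B)$. I expect this is resolvable by adding one more tiny independent block of $V$-only edges (not in $A$ or $B$) carrying the residual $V$-mass, at the cost of a harmless perturbation; tracking that this perturbation does not disturb the clean identity $(1+2\gamma)\gamma^2$ — or, alternatively, absorbing it exactly — is the one place where the bookkeeping needs care rather than being purely routine.
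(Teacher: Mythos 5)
You've correctly identified the vehicle (a product measure, equivalently a product of affine linear factors, so that strong Rayleigh-ness is automatic by closure under products and you only need to compute a coefficient), and your tightness-matching discussion at the end is on target. But your proposed architecture cannot produce a probability as small as $(1+2\gamma)\gamma^2 = \Theta(\gamma^2)$. You insist that ``the sole interaction between the $A$-count, the $B$-count, and the $V$-count runs through the shared block.'' If the only coupling runs through a shared block $S$ supported in $(A\cap V)\cup(B\cap V)$ with $\E[S_T]=\tfrac12-\gamma$, then conditionally on the outcome of $S$ the residual counts $A_T' , B_T', V_T'$ are independent, with means approximately $\tfrac12+\gamma$, $1$, and $\tfrac32+\gamma$ respectively; so for every value $s$ attainable by $S$ each of $\P[A_T'=1-s_A]$, $\P[B_T'=1-s_B]$, $\P[V_T'=2-s_A-s_B]$ is bounded below by an absolute constant (these means are bounded away from the integers and from $0,\infty$, and Hoeffding-type bounds kick in). Averaging over $s$ then gives $\P[A_T=1,B_T=1,V_T=2]=\Omega(1)$, not $\Theta(\gamma^2)$. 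Your remark that ``the `extra' mass in $V$ beyond the shared block is small'' is also off: that extra mass is $2-(\tfrac12-\gamma)=\tfrac32+\gamma$, which is not small, so the heuristic leading to the factor $(1+2\gamma)$ does not survive.

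The mechanism the paper actually uses is different and is the missing idea: the $\gamma^2$ smallness comes from individual factors in which edges of $A$, $B$, and $V$ \emph{compete} (are mutually exclusive choices within one slot), creating strong negative correlations among the three counts. Concretely, the paper's generating polynomial has a factor $\gamma x + \gamma z + (1-2\gamma)y$ (an $A$-edge, a $V$-edge, and a $B$-edge sharing one slot, the $B$-edge overwhelmingly likely) and a factor $(1-2\gamma)+2\gamma y$ (a rare residual $B$-edge), alongside a factor $\tfrac12 x + \tfrac12 z$ pitting an $A$-edge against a $V$-edge. To satisfy $A_T=B_T=1,V_T=2$ simultaneously, the mixed factor must \emph{not} choose its high-probability $y$ (probability $2\gamma$) while the rare factor \emph{must} choose $y$ (probability $2\gamma$) — that conjunction is the source of $\gamma^2$. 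None of this can be realized by decoupled exclusive blocks plus one shared $A\cap V$ block. So the gap is not ``bookkeeping'' as you anticipated: the block decomposition itself must be replaced by slots that mix $A$-, $B$-, and $V$-edges.
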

\begin{proof}
We will let $A$ be represented by $w$ and $x$, $B$ by $y$, and $V$ by $w$ and $z$.
Consider the real stable generating polynomial
    $$p_\mu(x,y,z,w) = z\left(\left(\frac{1}{2}+\gamma\right)+\left(\frac{1}{2}-\gamma\right)w\right)\left(\frac{1}{2}x+\frac{1}{2}z\right)\left(\gamma x+\gamma z + (1-2\gamma)y\right)((1-2\gamma)+2\gamma y)$$
One can check that the expectations are correct. Now we calculate the coefficient of $wyz$ and $xyz^2$, corresponding to the desired event. The distribution $\mu$ consists of 5 independent choices corresponding to each of the 5 terms. The first is of course trivial, we always take $z$. In the second, suppose we take $w$, so we have collected terms $wz$. Then, note we must take some variable from the third and fourth terms. So, $wyz$ has a coefficient of 0. 

Therefore we must take $(\frac{1}{2}+\gamma)$ in the second term, and we must now collect $xyz$ from the remainder. It follows that we must collect $y$ in the last term, as there are three terms and three variables, so we now have $(\frac{1}{2}+\gamma)2\gamma yz$ and must collect $xz$. There are two ways to do this, each with a coefficient of $\frac{\gamma}{2}$. Therefore the coefficient of $xyz^2$ is $(1+\gamma)\gamma^2$.
\end{proof}

\subsubsection{Improving Lemma 5.22}

We now improve upon Lemma 5.22 from \cite{KKO21}, which had a bound of $0.006\eps_{1/2}^2$. This is again a straightforward application of \cref{cor:mainprobbound_lite}.

\begin{lemma}[Lemma 5.22 of \cite{KKO21}]\label{lem:x_e>=1/2+eps_1/2}
Let $\bbe={\bf (u,v)}$ be a top edge bundle such that $x_\bbe\geq 1/2+\eps_{1/2}$. If $\eps_{1/2}\leq 0.001$, then, $\bbe$ is 2-1-1 happy with respect to $u$ with probability at least $0.038 \cdot \epsilon_{1/2}^2$.
\end{lemma}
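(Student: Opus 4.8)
The plan is to mirror the proof of \cref{lem:new521} almost verbatim, adjusting only for the fact that now $x(\bbe) \geq \frac12 + \epsilon_{1/2}$ rather than $\leq \frac12 - \epsilon_{1/2}$. First I would set up the $(A,B,C)$ degree partition of $\mathbf{u}$, condition on the event $\mu'$ together with $C_T = 0$ and $((A\cap C)\cup(B\cap C))_T = 0$, and call the resulting conditional measure $\nu$; by \cref{lem:NAhomogeneity} this is again strongly Rayleigh, occurs with probability at least $\frac12$, and perturbs the relevant expectations only by $O(\epsilon_\eta + \epsilon_{1/1})$ amounts. Writing $V = \delta(\mathbf{u})$, the target event is $A_T = B_T = 1$, $V_T = 2$, which forces the bundle edge $\bbe$ to be split correctly and hence $\bbe$ to be 2-1-1 happy with respect to $\mathbf{u}$.

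Next I would record the expectation bounds for $\nu$. The key difference from \cref{lem:new521} is the sign of the deviation coming from $x(\bbe)$: here the expectation of the part of $A$ (or $B$) lying across $V$ is $\approx \frac12 + \epsilon_{1/2}$ rather than $\frac12 - \epsilon_{1/2}$, so the binding constraints involving $A_T + V_T$, $B_T + V_T$, and $A_T + B_T + V_T$ shift accordingly, but one still gets a gap of order $\epsilon_{1/2}$ in each. Using $V_T \geq 1$ almost surely (since $V$ is a cut), I would replace $V_T$ by $V_T - 1$ — dividing the generating polynomial by the corresponding variable — so that the target vector becomes $\bm\kappa = (1,1,1)$ on the variables $A_T$, $B_T$, $V_T - 1$, and then extract from \cref{cor:mainprobbound_lite} the constants $\epsilon_1, \epsilon_2, \epsilon_3$: one of order a constant (roughly $0.5$) and two of order $\epsilon_{1/2}$ (roughly $1.8\epsilon_{1/2}$ and $1.75\epsilon_{1/2}$, or whatever the precise coefficients work out to under $x(\bbe)\geq\frac12+\epsilon_{1/2}$). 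Since $\prod_{\kappa_i>0}\frac{1}{e\sqrt{\kappa_i}} = e^{-3}$ here, this yields $\Prob_\nu[A_T=B_T=1, V_T=2] \geq e^{-3}\epsilon_1\epsilon_2\epsilon_3$, and multiplying by the $\geq\frac12$ probability of $\nu$ gives the claimed $0.038\,\epsilon_{1/2}^2$ after plugging in the numbers.

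The main obstacle — really the only non-routine part — is verifying that the expectation bounds for $\nu$ genuinely retain an $\Omega(\epsilon_{1/2})$ slack on the two-term and three-term constraints in the $x(\bbe)\geq\frac12+\epsilon_{1/2}$ regime, given the $O(\epsilon_\eta)$ and $O(\epsilon_{1/1})$ perturbations introduced by passing to $\mu'$ and by conditioning on $C_T = 0$; since $\epsilon_\eta \leq \epsilon_{1/2}^2$ and $\epsilon_{1/1} = \epsilon_{1/2}/12$ with $\epsilon_{1/2} \leq 0.001$, these perturbations are dominated by the $\epsilon_{1/2}$ gap, but one has to track the constants carefully (exactly as in \cite{KKO21}) to land on $0.038$ rather than a smaller number. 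Everything else is a direct invocation of \cref{lem:NAhomogeneity} and \cref{cor:mainprobbound_lite}, so I would keep the write-up short and simply reuse the expectation table from the proof of Lemma 5.22 in \cite{KKO21} with the sign of the $\epsilon_{1/2}$ terms adjusted.
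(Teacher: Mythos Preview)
Your plan diverges from the paper's proof in one essential point. The paper does \emph{not} mirror the setup of \cref{lem:new521}; after passing to $\mu'$ and conditioning $C_T=0$, it \emph{additionally} conditions on $\mathbf{u}\cup\mathbf{v}$ being a tree. Under $\nu$ there is then exactly one sampled edge across $\bbe$, so with $V:=\delta(\mathbf{v})\smallsetminus\delta(\mathbf{u})$ the three sets $A,B,V$ are disjoint and the target event becomes $A_T=B_T=V_T=1$ (since $\delta(\mathbf{v})_T = V_T + 1$ once $\bbe_T=1$). The expectation table from \cite{KKO21} then gives $\epsilon_1=0.5$, $\epsilon_2=\epsilon_3=1.75\,\epsilon_{1/2}$ with the binding constraints on the \emph{upper} side, and \cref{cor:mainprobbound_lite} applies directly with $\bm\kappa=(1,1,1)$ --- no $V_T\ge1$ division trick.

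Your proposed setup, copied from \cref{lem:new521}, has a gap. With $V=\delta(\mathbf{u})$ the event $\{A_T=B_T=1,\ V_T=2\}$ carries no information about $\delta(\mathbf{v})_T$: since $A,B,C$ partition $\delta(\mathbf{u})$ and $C_T=0$, one already has $V_T=A_T+B_T$, so the event reduces to $\{A_T=B_T=1\}$ and does not imply that $\bbe$ is 2-1-1 happy. If you instead intended $V=\delta(\mathbf{v})$, then $A,B$ meet $V$ along $\bbe$, violating the disjoint-subset hypothesis of \cref{cor:mainprobbound_lite}. The extra conditioning on $\mathbf{u}\cup\mathbf{v}$ being a tree is precisely what fixes $\bbe_T=1$ and makes $A,B,V$ disjoint in the $x_\bbe\ge\tfrac12+\epsilon_{1/2}$ regime; this is the step your sketch is missing, and without it the ``just flip the signs'' heuristic does not go through.
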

\begin{proof}
    The setup of this proof is similar to \cref{lem:new521}. We again look at the $(A,B,C)$ degree partition of ${\bf u}$. First, we condition on $\mu'$ and $C_T=0$. Then, we condition on ${\bf u} \cup {\bf v}$ to be a tree. All these events occur with probability at least 1/2. Let $\nu$ be the resulting measure. Under $\nu$, notice that there is exactly one edge between ${\bf u}$ and ${\bf v}$, or in other words that $(A \cap \delta({\bf v})) \cup (B \cap \delta({\bf v}))$ is 1. 

    Therefore, to obtain $A_T=B_T=1$ and $\delta({\bf v})_T=2$, where $V=\delta({\bf v}) \smallsetminus \delta({\bf u})$ it suffices to prove that $A_T=B_T=1$ and $V_T=1$ (where note that $A,B,V$ are now disjoint).

    As this is the same setup as Lemma 5.22 in \cite{KKO21}, we simply re-use their bounds on the expectations they obtain with \cref{lem:NAhomogeneity}:
    \[
\begin{array}{rcccl}
    -0.5 & \leq & \E_\nu[A_T] - 1 & \leq & 0.5 \\
    -0.5 & \leq & \E_\nu[B_T] - 1 & \leq & 0.5 \\
    -0.005 & \leq & \E_\nu[V_T] - 1 & \leq & 0.5 \\
    -0.005 & \leq & \E_\nu[A_T + B_T] - 2 & \leq & 0.5 \\
    -0.01 & \leq & \E_\nu[A_T + V_T] - 2 & \leq & 1 - 1.75 \varepsilon_{1/2} \\
    -0.01 & \leq & \E_\nu[B_T + V_T] - 2 & \leq & 1 - 1.75 \varepsilon_{1/2} \\
    -0.01 & \leq & \E_\nu[A_T + B_T + V_T] - 3 & \leq & 1 - 1.75 \varepsilon_{1/2} \\
\end{array}
\]
Therefore in \cref{cor:mainprobbound_lite} we can set
\[
\begin{array}{ccc}
    \epsilon_1 = 0.5, \qquad \epsilon_2 = 1.75 \varepsilon_{1/2}, \qquad \epsilon_3 = 1.75 \varepsilon_{1/2} \\ 
\end{array}
\]
which implies
    $$0.5 \cdot \PP{\nu}{A_T = 1, B_T = 1, V_T = 1} \geq 0.5 \cdot e^{-3} \cdot 0.5 \cdot 1.75 \varepsilon_{1/2} \cdot 1.75 \varepsilon_{1/2} \ge  0.038 \cdot \varepsilon_{1/2}^2$$
as desired.
\end{proof}

\subsubsection{Improving Lemma 5.23}

First we recall some notation from \cite{KKO21}. For $\bbe={\bf (u,v)}$ and a set $A \subseteq E$, (where recall $\bbe \subseteq E$) we let $A_{-\bbe} = A \smallsetminus \bbe$ and $x_{\bbe(A)} = x(A \cap \bbe)$.

Unfortunately, this lemma does not easily fit into the framework from this paper, as it does not follow from expectation information. Instead, to improve Lemma 5.23, we first note that Lemma A.1 in \cite{KKO21} can be parameterized in the following way. We simply replace the constant 5 by $k$ in the statement and follow the previous proof.

\begin{restatable}{lemma}{alemma}\label{lem:A1}	
For a good half top edge bundle $\bbe={\bf (u,v)}$, let $A,B,C$ be the degree partitioning of $\delta(u)$, and  let $V=\delta({\bf v})_{-\bbe}$. If $\eps_{1/2}\leq 0.001$, $x_{\bbe(B)}\leq \eps_{1/2}$, and $\P{(A_{-\bbe})_T + V_T\leq 1}\geq k\eps_{1/2}$  then $\bbe$ is 2-1-1 good,
$$ \P{\bbe \text{ 2-1-1 happy w.r.t. $u$}} \geq 0.001(\min\{100,k\}-0.2)\eps_{1/2}^2$$
\end{restatable}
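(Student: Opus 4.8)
The statement to prove is \cref{lem:A1}, which claims that if $\Prob[(A_{-\bbe})_T + V_T \leq 1] \geq k\eps_{1/2}$ then $\bbe$ is 2-1-1 happy with probability at least $0.001(\min\{100,k\}-0.2)\eps_{1/2}^2$. The excerpt explicitly tells us this is obtained from Lemma A.1 of \cite{KKO21} by replacing the constant $5$ with a parameter $k$ and re-running the original proof. So the plan is essentially: reproduce the structure of the \cite{KKO21} Lemma A.1 argument, tracking where the constant $5$ entered and verifying that nothing else in the proof depended on that specific value.

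\begin{proof}[Proof sketch]
The plan is to follow the proof of Lemma A.1 of \cite{KKO21} verbatim, replacing every occurrence of the numerical constant $5$ by the parameter $k$ and checking that the argument goes through. First I would recall the setup: $\bbe = {\bf (u,v)}$ is a good half top edge bundle with $(A,B,C)$ degree partition of $\delta({\bf u})$, $V = \delta({\bf v})_{-\bbe}$, and we work in the conditional measure $\mu'$ where ${\bf u},{\bf v}$ are trees. The event ``$\bbe$ is 2-1-1 happy w.r.t.\ ${\bf u}$'' asks for $A_T = 1$, $B_T = 1$, $C_T = 0$, $\delta({\bf v})_T = 2$. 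The hypothesis $x_{\bbe(B)} \leq \eps_{1/2}$ says almost none of $B$'s mass lies in the bundle, so conditioning on the single bundle edge being present mostly forces $B_T$ to come from $B_{-\bbe}$; combined with $x'(B) \approx 1$ this pins down $B_T$. The key quantitative input is the hypothesis $\Prob[(A_{-\bbe})_T + V_T \leq 1] \geq k\eps_{1/2}$, which replaces the ``$\geq 5\eps_{1/2}$'' bound that in \cite{KKO21} was established as an intermediate step; here it is simply assumed.

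The main steps, in order, would be: (1) condition on $C_T = 0$ and on the bundle edge $f \in T$ (here $f$ is the unique edge of $\bbe$ in $T$, using that $\bbe$ is a half bundle so $x(\bbe) \approx 1/2$ and one uses negative association / \cref{lem:NAhomogeneity} to control how expectations shift), obtaining a strongly Rayleigh measure $\nu$ under which we want $A_{-\bbe,T} = 1$, $B_{-\bbe,T} = 1$, and $V_T = 1$; (2) use the assumed lower bound on $\Prob[(A_{-\bbe})_T + V_T \leq 1]$ together with the expectation bounds on $A_{-\bbe}$, $B_{-\bbe}$, $V$ (which follow from \eqref{eq:ABCDegParx}, \cref{lem:NAhomogeneity}, and $x_{\bbe(B)} \leq \eps_{1/2}$) to feed into a conditional-probability / capacity estimate; (3) combine a factor of order $\eps_{1/2}$ coming from forcing $V_T = 1$ (and $A_{-\bbe,T} \leq 1$) against the assumed $k\eps_{1/2}$, a factor of order $\eps_{1/2}$ from the $B$-side event given $x_{\bbe(B)} \leq \eps_{1/2}$, and the constant-order probability of the conditioning events, to get the claimed $0.001(\min\{100,k\} - 0.2)\eps_{1/2}^2$. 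The $\min\{100,k\}$ truncation appears because once $k$ is large the relevant probability saturates (a probability cannot exceed $1$, so the ``$k\eps_{1/2}$'' input cannot be exploited beyond the point where it would force the target probability above its natural ceiling); the $-0.2$ slack absorbs the small errors from $\epsilon_\eta$, $\epsilon_{1/1}$, and the $1.75\eps_{1/2}$-type corrections that appear throughout \cite{KKO21}'s degree-partition bounds.

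The step I expect to be the main obstacle is (3): carefully tracking the constant through the conditioning and through the various $O(\eps_{1/2})$ and $O(\epsilon_\eta)$ error terms so that the final constant is genuinely $0.001$ and the slack is genuinely $0.2$, matching what \cite{KKO21} obtained with $k = 5$ (their bound was $0.001 \cdot 4.8 \cdot \eps_{1/2}^2$). Since we are told the original proof gives this when $k = 5$ and that the only change is $5 \mapsto k$, the honest version of this proof is to point to the exact lines of \cite{KKO21}'s proof of Lemma A.1 where $5$ is used, observe each such use is monotone in the parameter and only needs a lower bound, and note that the ceiling at $100$ and the saturation of probabilities is what forces the $\min\{100,k\}$. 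No genuinely new idea is needed beyond this bookkeeping; the content is entirely in verifying that the \cite{KKO21} argument is parametric in $k$.
\end{proof}
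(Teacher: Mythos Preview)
Your high-level plan is exactly what the paper does: it re-runs the proof of Lemma~A.1 of \cite{KKO21} line by line with the parameter $k$ in place of the constant $5$, and the paper says so explicitly. A couple of details in your sketch diverge from the actual argument and would mislead if followed literally: the conditioning is on $C_T=0$ and on ${\bf u}\cup{\bf v}$ being a tree (so exactly one bundle edge is present, not a specific edge $f$), and the engine is not a capacity estimate but the ad hoc strongly Rayleigh tools---log-concavity, Hoeffding (\cref{thm:hoeffding}), and the conditional-probability transfer lemmas \cref{lem:updowntruncation} and \cref{lem:SRA=nA}; the $\min\{100,k\}$ cap arises precisely from the $p_m$-saturation in \cref{lem:SRA=nA}, and the $-0.2$ from subtracting the $\PP{}{C_T=0}$ correction when passing the hypothesis to $\nu$.
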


We defer the proof to \cref{app:furtherProb} since this section focuses on applications of our new machinery. Now we can show that in fact without any modifications, Lemma 5.23 is true with a much larger constant as long as one sets $\epsilon_{1/2} = 0.0002$. In particular:
 \begin{lemma}[Lemma 5.23 from \cite{KKO21}]
\label{lem:one-of-two-211}
Let $\bbe={\bf (v,u)}$ and $\bbf={\bf (v,w)}$ be good half top edge bundles and let $A,B,C$ be the degree partitioning of $\delta(v)$ such that $x_{\bbe(B)},x_{\bbf(B)}\leq \eps_{1/2}$.   
Then, if $\epsilon_{1/2} \le 0.0002$, one of $\bbe,\bbf$ is 2-1-1 happy with probability at least $0.0498\eps_{1/2}^2$.
\end{lemma}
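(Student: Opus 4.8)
The plan is to bypass the expectation-based machinery (which, as already noted, does not apply to this statement) and instead feed a stronger input into the parameterized helper lemma \cref{lem:A1}. Recall the shape of that lemma: given a good half top edge bundle $\bbe$, the degree partition $A,B,C$ of the side being partitioned, the complementary set $V$ on the other side, the bound $x(B\cap\bbe)\le\eps_{1/2}$, and the hypothesis $\P{(A_{-\bbe})_T+V_T\le 1}\ge k\,\eps_{1/2}$, \cref{lem:A1} concludes that $\bbe$ is $2$-$1$-$1$ happy with probability at least $0.001\,(\min\{100,k\}-0.2)\,\eps_{1/2}^2$. The original proof of Lemma~5.23 in \cite{KKO21} is exactly this with $k=5$ hard-coded; the point is that one can afford to run it with $k=50$.

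First I would apply \cref{lem:A1} separately to $\bbe={\bf(v,u)}$ and to $\bbf={\bf(v,w)}$, both times using the common degree partition $A,B,C$ of $\delta(v)$. This is legitimate precisely because the two bundles share the side ${\bf v}$, so the set $A$ is literally the same in both applications; the other-side sets are $\delta(u)_{-\bbe}$ and $\delta(w)_{-\bbf}$ respectively. The hypotheses $x(B\cap\bbe)\le\eps_{1/2}$ and $x(B\cap\bbf)\le\eps_{1/2}$ are given, and $\eps_{1/2}\le 0.0002<0.001$. So the whole argument reduces to showing that, for at least one of the two bundles, the probability $\P{(A_{-\bbe})_T+(\delta(u)_{-\bbe})_T\le 1}$ (resp.\ $\P{(A_{-\bbf})_T+(\delta(w)_{-\bbf})_T\le 1}$) is at least the \emph{absolute} constant $0.01$, rather than merely $\Omega(\eps_{1/2})$.

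This last point is the heart of the matter, and it is where one reuses material already in \cite{KKO21}. Because $x(C)$ and $x(B\cap\bbe),x(B\cap\bbf)$ are all $O(\eps_{1/2})$, the set $A$ is, up to edges of negligible $x$-value, the disjoint union $\bbe\sqcup\bbf$; consequently $A_{-\bbe}$ is close to $\bbf$ and $A_{-\bbf}$ is close to $\bbe$, and the two downward-closed events above become — up to $O(\eps_{1/2})$ corrections in the underlying edge sets — precisely the events whose probabilities the case analysis in the original proof of Lemma~5.23 was already bounding. That argument produces a constant lower bound on one of them (it never used the smallness of $\eps_{1/2}$ to get it); one only has to check that the constant is at least $0.01$. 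Granting this, since $\eps_{1/2}\le 0.0002$ we have $0.01\ge 50\,\eps_{1/2}$, so \cref{lem:A1} applies with $k=50$ to the relevant bundle and yields that it is $2$-$1$-$1$ happy with probability at least $0.001\,(50-0.2)\,\eps_{1/2}^2=0.0498\,\eps_{1/2}^2$, as claimed.

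The main obstacle is exactly the bookkeeping in that last step: one must isolate, inside the original proof of Lemma~5.23 in \cite{KKO21}, which inequalities used only constant-level information (that $\bbe,\bbf$ are roughly half bundles, that $x(\delta(\cdot))\approx 2$, that $x(C)$ and the $B$-parts are tiny) versus the genuinely $\eps_{1/2}$-sized information, confirm that the resulting lower bound on one of the two probabilities is at least $0.01$, and keep the $O(\eps_{1/2})$-approximation of $A$ by $\bbe\sqcup\bbf$ under control throughout. Everything else is a mechanical substitution of $k=50$ into \cref{lem:A1}.
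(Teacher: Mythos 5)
Correct, and essentially the same approach as the paper: the paper's proof also observes that the original Lemma~5.23 argument in \cite{KKO21} already gives the absolute-constant bound $\P{U_T+(A_{-\bbe})_T\le 1}\ge 0.01$, rewrites $0.01\ge 50\eps_{1/2}$ using $\eps_{1/2}\le 0.0002$, and plugs $k=50$ into \cref{lem:A1} to get $0.001(50-0.2)\eps_{1/2}^2=0.0498\eps_{1/2}^2$. Your only addition is extra caution about verifying which inequalities in \cite{KKO21} are constant-level versus $\Theta(\eps_{1/2})$-level, which the paper takes as a direct citation.
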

\begin{proof}
    Let $U=\delta({\bf u}) \smallsetminus \bbe$ and $A_{-\bbe} = A \smallsetminus \bbe$. Then, Lemma 5.23 in \cite{KKO21} shows that $\P{U_T + (A_{-\bbe})_T \le 1} \ge 0.01$. Using that $\epsilon_{1/2} \le  0.0002$, this is at least $50\epsilon_{1/2}$. Now by \cref{lem:A1} we obtain a bound of $0.001(50-0.2)\epsilon_{1/2}^2$ as desired.
\end{proof}

\subsubsection{Improving Lemma 5.24}

Lemma 5.24 also does not follow directly from expectation information. However, we show that our new capacity bound is still helpful in improving the bound. We include the new part of the lemma here and defer the remainder of the proof to \cref{app:furtherProb} as it is similar to \cite{KKO21}.

\begin{lemma}[Lemma 5.24 from \cite{KKO21}]\label{lem:eABpositive211}
Let $\bbe={\bf (u,v)}$ be a good half edge bundle and 	let $A,B,C$ be the degree partitioning of $\delta(u)$.  If $\eps_{1/2}\leq 0.001$ and $x_{\bbe(A)},x_{\bbe(B)}\geq \eps_{1/2}$, 
 then
$$ \P{\bbe \text{ 2-1-1 happy w.r.t } u} \geq 0.0485\eps_{1/2}^2. $$
\end{lemma}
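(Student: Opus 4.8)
The plan is to mirror the structure of the proofs of \cref{lem:new521} and \cref{lem:x_e>=1/2+eps_1/2}, but with an extra twist to handle the fact that the hypothesis $x_{\bbe(A)},x_{\bbe(B)}\geq \eps_{1/2}$ gives us \emph{lower} bounds on certain edge probabilities rather than the upper bounds that make $\bbe$-type arguments easy. As in those proofs, first I would pass to the conditional measure $\mu'$ (where ${\bf u},{\bf v}$ are trees), condition on $C_T = 0$ (and any ``mixed'' terms such as $((A\cap C)\cup(B\cap C))_T = 0$ that are needed so that $A_T,B_T$ are honestly counting the relevant degree), and possibly condition ${\bf u}\cup{\bf v}$ to be a single tree; by \cref{lem:NAhomogeneity} and the bounds in \eqref{eq:ABCDegParx} all of this costs only a constant factor (at least $1/2$), and the resulting measure $\nu$ is still strongly Rayleigh. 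Then I would set up the three random variables $A_T$, $B_T$, and $V_T$ (with $V = \delta({\bf v})$ or $\delta({\bf v})\smallsetminus\bbe$ as appropriate, chosen so that $V_T \geq 1$ a.s.\ and we can shift to $V_T - 1$ as in \cref{lem:new521}), and target the event $A_T = B_T = 1$, $V_T = 2$ (equivalently $V_T - 1 = 1$), which forces 2-1-1 happiness.

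The key point where the new hypothesis enters: since $x_{\bbe(A)},x_{\bbe(B)}\geq \eps_{1/2}$, the bundle $\bbe$ carries a noticeable amount of the $A$- and $B$-mass, so after conditioning on the tree/connectivity events the \emph{upper} tail of $\E_\nu[A_T + V_T]$ and $\E_\nu[B_T + V_T]$ (and of $\E_\nu[A_T+B_T+V_T]$) is bounded away from the value $3$ (resp.\ $4$) by a quantity linear in $\eps_{1/2}$ — this is exactly the analogue of the ``$1 - 1.75\eps_{1/2}$'' bounds appearing in \cref{lem:x_e>=1/2+eps_1/2}. I would re-derive these expectation bounds using \cref{lem:NAhomogeneity} together with the degree-partition bounds \eqref{eq:ABCDegParx} and the half-bundle condition $\tfrac12 - \eps_{1/2}\le x(\bbe)\le\tfrac12+\eps_{1/2}$, obtaining a table just like the ones in the previous two lemmas but now with both the $x_{\bbe(A)}\ge\eps_{1/2}$ and $x_{\bbe(B)}\ge\eps_{1/2}$ inputs used to control the two-term sums involving $V_T$. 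With that table in hand, \cref{cor:mainprobbound_lite} applied to $\bm\kappa = (1,1,1)$ on the variables $(A_T, B_T, V_T - 1)$ yields $\epsilon_1 = 1/2$, $\epsilon_2 = c\,\eps_{1/2}$, $\epsilon_3 = c'\,\eps_{1/2}$ for explicit constants $c,c'$ close to $1.75$–$1.8$, and the product bound $\tfrac12 \cdot e^{-3}\cdot \tfrac12 \cdot c\eps_{1/2}\cdot c'\eps_{1/2}$ comes out to at least $0.0485\,\eps_{1/2}^2$ after plugging in $\eps_{1/2}\le 0.001$ (and the slack from the mixed-term conditioning).

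The main obstacle I anticipate is not the capacity estimate — \cref{cor:mainprobbound_lite} does that work for us once the expectations are pinned down — but rather the bookkeeping of the expectation bounds under the \emph{two-sided} hypothesis $x_{\bbe(A)},x_{\bbe(B)}\ge\eps_{1/2}$. In the earlier lemmas only one ``direction'' of edge mass had to be tracked; here one must simultaneously exploit that both $A$ and $B$ put mass on $\bbe$, be careful about double-counting edges of $\bbe$ that lie in $A\cap\delta({\bf v})$ versus $B\cap\delta({\bf v})$, and ensure that after conditioning ${\bf u}\cup{\bf v}$ to be connected the identity $\delta({\bf v})_T = 2 \iff V_T = 1$ (or $=2$) still holds with the right $V$. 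This is why the proof is deferred to \cref{app:furtherProb}: it requires re-running the relevant parts of the \cite{KKO21} degree-partition analysis with the extra mass bookkeeping, after which the clean capacity bound of \cref{cor:mainprobbound_lite} finishes the argument exactly as in \cref{lem:new521} and \cref{lem:x_e>=1/2+eps_1/2}.
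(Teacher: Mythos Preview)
Your proposal has a real gap: the three–variable template of \cref{lem:new521} and \cref{lem:x_e>=1/2+eps_1/2} does not carry over here, and the hypothesis $x_{\bbe(A)},x_{\bbe(B)}\ge\eps_{1/2}$ does not produce the expectation table you describe. In those two lemmas the ``$1-c\eps_{1/2}$'' gaps came from $x_\bbe$ being bounded away from $\tfrac12$; here $\bbe$ is a \emph{half} bundle, so that source of slack is gone. Concretely, once you condition on $C_T=0$ and ${\bf u}\cup{\bf v}$ a tree and take $V=\delta({\bf v})\smallsetminus\bbe$ so that $A,B,V$ are disjoint, you have $A_T+B_T=X_T+1$ with $X=(A\cup B)_{-\bbe}$, and the paper's own bounds give $\E_\nu[X_T+Y_T]\in[2.495,3.01]$; hence $\E_\nu[A_T+B_T+V_T]-3\in[0.495,1.01]$, which can exceed $1$, so \cref{cor:mainprobbound_lite} is vacuous at $\bm\kappa=(1,1,1)$. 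The lower bounds $x_{\bbe(A)},x_{\bbe(B)}\ge\eps_{1/2}$ say nothing about how far these subset-sum expectations sit from integers, so your claimed $\eps_2,\eps_3\approx c\eps_{1/2}$ are unjustified.

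The paper's argument is structurally different and uses the hypothesis in another place. It applies the capacity bound to only \emph{two} disjoint variables, $X=(A\cup B)_{-\bbe}$ and $Y=\delta({\bf v})_{-\bbe}$, aiming for $\PP{\nu}{X_T=Y_T=1}\ge c\,\eps_{1/2}$; when $\E_\nu[X_T+Y_T]$ is too close to $3$ the capacity bound alone is not enough and one must fall back on ad-hoc SR tools (Hoeffding via \cref{thm:hoeffding}, log-concavity, \cref{lem:updowntruncation}, \cref{lem:SRA=nA}, and the $2$--$2$-good property), which is exactly the content of \cref{lem:eABpositive211-app}. The hypothesis $x_{\bbe(A)},x_{\bbe(B)}\ge\eps_{1/2}$ is then used in a separate \emph{parity-correction} step: under $\nu$ exactly one $\bbe$-edge is chosen, independently of $X,Y$, and it lands in whichever of $A,B$ is needed to force $A_T=B_T=1$ with probability at least $1.99\eps_{1/2}$. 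Multiplying these two steps gives the $\eps_{1/2}^2$. Your plan misses this parity-correction idea entirely, and that is precisely where the lemma's hypothesis enters.
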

\begin{proof}
Condition $C_T$ to be zero, $u,v$ and $u\cup v$ be trees. Let $\nu$ be the resulting measure.
Let $X=A_{-\bbe}\cup B_{-\bbe}, Y=\delta(v)_{-\bbe}$. Then by \cref{lem:NAhomogeneity}:
\begin{align*}
&\EE{\nu}{X_T}, \EE{\nu}{Y_T}\in [1-3\eps_{1/1}, 1.5+\eps_{1/2}+2\eps_{1/1}+3\eps_{\eta}]\subset [0.995,1.51] \\
&\EE{\nu}{X_T+Y_T} \in [2.5 - 3\epsilon_{1/1} -3\epsilon_{1/2}, 3+2\epsilon_{1/2}+2\epsilon_{1/1}+4\epsilon_{\eta}] \in [2.495,3.01]
\end{align*}
Therefore using \cref{cor:mainprobbound_lite}, if $\EE{\nu}{X_T+Y_T} \le 2.999$, we could set $\epsilon_1 = 0.49,\epsilon_2=0.001$ and obtain a bound of $\PP{\nu}{X_T=Y_T=1} \ge e^{-2}(0.49)(0.001) \ge 0.06 \epsilon_{1/2}$. Therefore, we either have $\PP{\nu}{X_T=Y_T=1} \ge 0.06 \epsilon_{1/2}$, or it is the case that:
\begin{align*}
&\EE{\nu}{X_T}, \EE{\nu}{Y_T}\in [1.489,1.51], \qquad \EE{\nu}{X_T+Y_T} \in [2.999,3.01]
\end{align*}
Given this, we obtain the lemma using \cref{lem:eABpositive211-app}.
\end{proof}

\subsection{Further Probabilistic Statements}\label{app:furtherProb}

While in the previous section, we focused on the usage of \cref{cor:mainprobbound_lite}, here we require some of the ad hoc methods of \cite{KKO21} to make progress. 

A key fact about real stable polynomials we will use in the remaining statements is that their univariate restrictions are real rooted. As a consequence, the following standard fact can be shown:

\begin{lemma}\label{cor:rankseqBS}
Let $\mu:2^E \to \R_{\ge 0}$ be a strongly Rayleigh distribution and let $F \subseteq E$. Then  there exists independent Bernoulli random variables $B_1,\dots,B_m$ such that $\PP{T \sim \mu}{|F \cap T| = k} = \P{\sum_{i=1}^m B_i = k}$ for all $k \in \Z_{\ge 0}$.
\end{lemma}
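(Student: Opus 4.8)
The plan is to reduce the multivariate statement to a univariate one and then invoke the classical theorem of Newton on real-rooted polynomials with nonnegative coefficients. First I would form the generating polynomial $g_\mu(z) = \sum_{S} \mu(S) \bm{z}^S \in \R_{\geq 0}[\{z_e\}_{e \in E}]$, which is real stable since $\mu$ is strongly Rayleigh. The key observation is that the distribution of $|F \cap T|$ is governed by the single-variable specialization obtained by setting $z_e = t$ for all $e \in F$ and $z_e = 1$ for all $e \notin F$: define $h(t) := g_\mu(z)$ under this substitution. Then $h(t) = \sum_{k \geq 0} \PP{T \sim \mu}{|F \cap T| = k}\, t^k$, so the coefficients of $h$ are exactly the probability masses we want to match; in particular $h(1) = 1$ and $h$ has nonnegative coefficients.

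Next I would argue that $h$ is real-rooted. This follows from the standard fact that real stability is preserved under restriction to a line with nonnegative (or more precisely, real) slopes in the variables — substituting $z_e \mapsto t$ on a subset and $z_e \mapsto 1$ elsewhere is such a specialization, and a real stable polynomial in one variable is exactly a real-rooted polynomial (since a nonconstant univariate polynomial with a non-real root has a root in the open upper half-plane). Because $h$ has nonnegative coefficients and $h(0) \geq 0$, all its roots are real and non-positive, so we may write $h(t) = c \prod_{i=1}^m (t + \lambda_i)$ with $c > 0$ and each $\lambda_i \geq 0$. Normalizing using $h(1) = 1$, set $p_i := \frac{1}{1 + \lambda_i} \in [0,1]$; then $h(t) = \prod_{i=1}^m \big((1 - p_i) + p_i t\big)$, which is precisely the probability generating function of $\sum_{i=1}^m B_i$ where $B_i \sim \mathrm{Bernoulli}(p_i)$ are independent. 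Matching coefficients of $t^k$ on both sides gives $\PP{T \sim \mu}{|F \cap T| = k} = \P{\sum_i B_i = k}$ for all $k \geq 0$, as desired.

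The only genuine subtlety — the "hard part," though it is standard — is justifying that the univariate restriction $h$ is real-rooted, i.e. that the class of real stable polynomials is closed under setting several variables equal to a common new variable and others to real constants. One clean way to see this: setting $z_e = 1$ for $e \notin F$ is a limit/evaluation that preserves stability (evaluating a real stable polynomial at a real point in some coordinates yields a real stable polynomial in the rest, as long as it does not vanish identically), and identifying the remaining variables $\{z_e\}_{e \in F}$ with a single variable $t$ is the diagonalization operation $p(t,\ldots,t)$, which also preserves real stability. Both of these are recorded in the standard references on stable polynomials (e.g.\ \cite{BBL09}), so in the write-up I would simply cite these closure properties rather than reprove them. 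One should also handle the degenerate case where $h$ is a nonzero constant (when $F \cap T = \emptyset$ almost surely), in which case $m = 0$ and the empty product of Bernoullis works trivially.
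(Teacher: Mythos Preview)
Your proposal is correct and follows exactly the approach the paper sketches: the paper merely notes that univariate restrictions of real stable polynomials are real-rooted and declares the lemma a standard consequence, which is precisely the specialization-and-factorization argument you wrote out in detail. Your write-up is more careful than the paper's (handling the normalization and the degenerate constant case), but the underlying idea is identical.
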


Bernoulli random variables are quite easy to work with due to the following theorem of Hoeffding.
\begin{theorem}[Corollary 2.1 from \cite{Hoeff56}]\label{thm:hoeffding}
Let $g:\{0,1,\dots,n\}\to \R$ and $0\leq q\leq n$ for some integer $n\geq 0$.  Let $B_1,\dots,B_n$ be $n$ independent Bernoulli random variables with success probabilities $p_1,\dots,p_n$, where $\sum_{i=1}^n p_n = q$ that minimizes (or maximizes)
$$ \E[g(B_1+\dots+B_n)]$$
over all such distributions. Then,  $p_1,\dots,p_n\in\{0,x,1\}$ for some $0<x<1$. In particular, if only $m$ of $p_i$'s are nonzero and $\ell$ of $p_i$'s are 1, then the remaining $m-\ell$ are $\frac{q-\ell}{m-\ell}$.
\end{theorem}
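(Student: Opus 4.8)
The plan is to recast the statement as a constrained optimization of a multi-affine function. Writing $\bm p=(p_1,\dots,p_n)$ and $f(\bm p):=\E[g(B_1+\cdots+B_n)]=\sum_{k=0}^n g(k)\,\Prob[B_1+\cdots+B_n=k]$, one checks immediately that $f$ is a polynomial which is affine (of degree at most $1$) in each variable $p_i$ separately, and that the feasible set $\Pi:=\{\bm p\in[0,1]^n:\sum_i p_i=q\}$ is a nonempty compact polytope (nonempty precisely because $0\le q\le n$). Hence $f$ attains a minimum and a maximum on $\Pi$. I would fix a minimizer $\bm p^{*}$ of $f$ over $\Pi$ having the \emph{smallest possible number of coordinates lying in the open interval $(0,1)$}, and then show that all such ``fractional'' coordinates of $\bm p^{*}$ coincide; the argument for a maximizer is identical with one inequality reversed.

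The core step is a two-coordinate perturbation. Suppose toward a contradiction that $\bm p^{*}$ has two coordinates $p_i^{*},p_j^{*}\in(0,1)$ with $p_i^{*}\neq p_j^{*}$. Freeze all $p_\ell$ with $\ell\notin\{i,j\}$, set $S'':=\sum_{\ell\neq i,j}B_\ell$ and $u_r:=\E[g(S''+r)]$ for $r=0,1,2$, and expand $B_i+B_j$ into its three outcomes to get
\[
    f=u_0+(p_i+p_j)(u_1-u_0)+p_ip_j\,(u_0-2u_1+u_2).
\]
Hence along the segment $p_i+p_j=s$ (with $s=p_i^{*}+p_j^{*}$ and the other coordinates fixed) the restriction of $f$ is the quadratic $h(t)=c+A\,t(s-t)$ in $t=p_i$, where $c$ is constant and $A=u_0-2u_1+u_2=\E\bigl[g(S'')-2g(S''+1)+g(S''+2)\bigr]$ is an expected second difference; the vertex of this parabola sits at $t=s/2$ regardless of $A$, and $p_i^{*}$ lies in the interior of the feasible range of $t$ since $p_i^{*},p_j^{*}\in(0,1)$. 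If $A>0$, $h$ is strictly concave, so an interior point cannot minimize $f$ along the segment and we could strictly decrease $f$ while staying feasible, contradicting minimality of $\bm p^{*}$. If $A=0$, $h$ is constant, so we may slide $t$ to an endpoint of its feasible range, where one of $p_i,p_j$ lands in $\{0,1\}$, without changing $f$; this yields a minimizer with strictly fewer fractional coordinates, contradicting the choice of $\bm p^{*}$. If $A<0$, $h$ is strictly convex with unique minimum at $t=s/2$, so $p_i^{*}=s/2=p_j^{*}$, contradicting $p_i^{*}\neq p_j^{*}$. Therefore all coordinates of $\bm p^{*}$ in $(0,1)$ share one common value $x$ and the rest lie in $\{0,1\}$, which is precisely the asserted form $p_i^{*}\in\{0,x,1\}$; the maximizer case swaps the roles of $A>0$ and $A<0$.

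The ``in particular'' clause is then pure bookkeeping: if $\ell$ of the coordinates equal $1$ and $m$ are nonzero, then $m-\ell$ of them equal $x$, and the constraint $\sum_i p_i=q$ reads $\ell+(m-\ell)x=q$, giving $x=\frac{q-\ell}{m-\ell}$ whenever $m>\ell$. I expect the genuine obstacle to be the degenerate case $A=0$: there the perturbation changes nothing, so optimality alone yields no contradiction, and it is precisely the device of choosing an optimizer that minimizes the number of fractional coordinates which makes the argument close. A minor but necessary point throughout is that the two-coordinate perturbations must remain inside $[0,1]^n$, which is exactly why one only ever perturbs against a pair of coordinates that are strictly between $0$ and $1$.
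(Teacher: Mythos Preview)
The paper does not prove this theorem; it is quoted verbatim as Corollary~2.1 of \cite{Hoeff56} and used as a black box, so there is no ``paper's own proof'' to compare against. Your argument is correct and is essentially the classical one: the objective is multi-affine, hence along any feasible segment that trades mass between two coordinates it restricts to a quadratic $c+A\,t(s-t)$ with vertex at the midpoint, and the three-way case split on the sign of the second difference $A$ forces any two fractional coordinates to coincide (or, in the degenerate $A=0$ case, lets you slide one to $\{0,1\}$ without changing the value). Your tie-breaking device---selecting an optimizer with the fewest fractional coordinates---is exactly what is needed to close the $A=0$ case, and your check that $p_i^{*}$ lies in the interior of the feasible $t$-range (because both $p_i^{*},p_j^{*}\in(0,1)$) is the right justification for the strict improvement in the $A>0$ case.
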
 
In this section we will apply \cref{thm:hoeffding} in the following manner. First we use \cref{cor:rankseqBS} to show that the random variable $A=|F \cap T|$ for some $F \subseteq E$ is distributed as a sum of independent Bernoullis. Then we apply \cref{thm:hoeffding} to this distribution with a function $g$ that indicates if $A$ has a certain size or a certain parity. For example we may be interested in a lower bound on the probability that $A \ge 1$ given that its expectation is 1. \cref{thm:hoeffding} easily allows us to show that this probability is at least $1/e$ since in the worst case all $n$ non-zero Bernoullis are equal, giving a probability of $(1-\frac{1}{n})^{n-1} \ge 1/e$. See Lemma 2.21 of \cite{KKO21} for a generic lower bound on such probabilities using \cref{thm:hoeffding}.

Finally, we will need the following two statements from \cite{KKO21}.

\begin{lemma}[Lemma 5.4 from \cite{KKO21}]\label{lem:updowntruncation}
Given a strongly Rayleigh distribution $\mu:2^{[n]}\to \R_{\geq 0}$, let $A,B$ be two (nonnegative) random variables corresponding to the number of elements sampled from two {\em disjoint} sets such that  $\P{A+B=n}>0$ where $n=n_A+n_B$. Then,
\begin{eqnarray}
	\P{A\geq n_A | A+B=n}=\P{B\leq n_B | A+B=n} & \geq & \P{A\geq n_A}\P{B\leq n_B},  \\
	\P{A\leq n_A|A+B=n}=\P{B\geq n_B | A+B=n}  &\geq &  \P{A\leq n_A}\P{B\geq n_B}.
\end{eqnarray} 
\end{lemma}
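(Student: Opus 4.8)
The two claimed equalities are immediate: on the event $\{A+B=n\}$ we have $A\ge n_A \iff B=n-A\le n-n_A=n_B$ and $A\le n_A \iff B\ge n_B$, so $\P{A\ge n_A\mid A+B=n}=\P{B\le n_B\mid A+B=n}$ and likewise for the other pair. Moreover the second displayed inequality is the first one with the roles of $(A,n_A)$ and $(B,n_B)$ interchanged (using the equalities just noted to rewrite its left-hand side), so the plan is to prove only $\P{A\ge n_A\mid A+B=n}\ge \P{A\ge n_A}\,\P{B\le n_B}$.

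First I would pass to a cleaner model. Let $F_A,F_B\subseteq E$ be the disjoint index sets underlying $A$ and $B$, and replace $\mu$ by its projection onto $2^{F_A\cup F_B}$ (i.e. set all other variables of $g_\mu$ to $1$): projections of strongly Rayleigh measures are strongly Rayleigh, and the joint law of $(A,B)$ is unchanged. Now $A+B=|T\cap(F_A\cup F_B)|$ is the total size of the sample, so $\{A+B=n\}$ becomes the fixed-total-size event $\{|T|=n\}$. Conditioning a strongly Rayleigh measure on a fixed total size is again strongly Rayleigh: its generating polynomial is, up to normalization, the degree-$n$ homogeneous part of $g_\mu$, and this part is real stable because it is obtained by homogenizing (adds a variable, preserves stability), differentiating in the new variable (preserves stability), and specializing that variable to $0$ (preserves stability, the result being nonzero). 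Call the resulting strongly Rayleigh measure $\mu^\ast$. Under $\mu^\ast$ we have $A+B=n$ almost surely and, by \cref{cor:rankseqBS} applied to $\mu^\ast$ and $F_A$, the variable $A$ is distributed as a sum $\sum_i\zeta_i$ of independent Bernoullis; hence $\P{A\ge n_A\mid A+B=n}=\P{\sum_i\zeta_i\ge n_A}$. Applying \cref{cor:rankseqBS} to the projected $\mu$ as well, the two factors $\P{A\ge n_A}$ and $\P{B\le n_B}$ are also tails of sums of independent Bernoullis, so everything is reduced to a comparison between sums of independent Bernoullis for the conditioned and the unconditioned laws.

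The core step, which I expect to be the main obstacle, is exactly this comparison. Negative association (as packaged in \cref{lem:NAhomogeneity}) gives $\P{A\ge n_A,\,B\le n_B}\ge \P{A\ge n_A}\,\P{B\le n_B}$ fairly directly (apply it to the increasing event $\{A\ge n_A\}$ and the decreasing event $\{B\le n_B\}$ on the disjoint sets $F_A,F_B$), so it would suffice to show the positive correlation $\P{A+B=n\mid A\ge n_A,\,B\le n_B}\ge \P{A+B=n}$ (or a suitable variant of it), after which $\P{A\ge n_A,\,B\le n_B,\,A+B=n}\ge \P{A+B=n}\,\P{A\ge n_A}\,\P{B\le n_B}$ and dividing by $\P{A+B=n}$ finishes. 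This positive correlation cannot be extracted from negative association alone, since conditioning on $\{A\ge n_A\}\cap\{B\le n_B\}$ reweights the law of $A$ by a factor that is not monotone; the plan is therefore to use the Bernoulli representations explicitly, splitting into the cases $n\ge\E[A+B]$ and $n<\E[A+B]$, and in each case using log-concavity of the tail probabilities of sums of independent Bernoullis (equivalently, Hoeffding-type extremality, \cref{thm:hoeffding}) together with the fact that $a\mapsto\P{A=a,\,A+B=n}$ is itself a Pólya-frequency sequence — it is, up to scaling, the coefficient sequence of the real-rooted generating polynomial of $A$ under $\mu^\ast$ — and that it is dominated by $\min\{\P{A=a},\,\P{B=n-a}\}$. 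Making this last combination of structural facts into a clean inequality is the crux of the argument.
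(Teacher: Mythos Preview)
The paper does not prove this lemma; it is quoted verbatim from \cite{KKO21} and used as a black box. So there is no ``paper's own proof'' to compare against, and your attempt has to be judged on its own.

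Your reductions are fine: the equalities are trivial, the projection to $F_A\cup F_B$ and the size-conditioning preserve the strongly Rayleigh property, and negative association gives $\P{A\ge n_A,\,B\le n_B}\ge \P{A\ge n_A}\P{B\le n_B}$. You also correctly isolate the missing step, which (rewritten) is
\[
\P{A\ge n_A,\,B\le n_B}\ \le\ \P{A\ge n_A\mid A+B=n}.
\]
But your plan for this step --- case-splitting on $n$ versus $\E[A+B]$, invoking Hoeffding extremality and the P\'olya-frequency structure of $a\mapsto\P{A=a,\,A+B=n}$ --- is not a proof. None of those ingredients, alone or combined, yields the pointwise inequality you need, and you concede as much in your final sentence. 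So as written there is a genuine gap.

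The clean route (and almost certainly the one in \cite{KKO21}) avoids all of that machinery. Use the stochastic dominance between size-truncations of a strongly Rayleigh measure: for the projected measure, the conditional law given $|T|=k+1$ stochastically dominates the one given $|T|=k$ (this is a standard consequence of the strongly Rayleigh property, see \cite{BBL09}). Hence $k\mapsto \P{A\ge n_A\mid A+B=k}$ is nondecreasing and $k\mapsto \P{B\le n_B\mid A+B=k}$ is nonincreasing. Now decompose
\[
\P{A\ge n_A,\,B\le n_B}=\sum_{k}\P{A\ge n_A,\,B\le n_B\mid A+B=k}\,\P{A+B=k}.
\]
On $\{A+B=k\}$ with $k\le n$ the event reduces to $\{A\ge n_A\}$, and with $k\ge n$ it reduces to $\{B\le n_B\}$. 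By the two monotonicities, each conditional probability is at most $\P{A\ge n_A\mid A+B=n}=\P{B\le n_B\mid A+B=n}$, so the whole sum is at most $\P{A\ge n_A\mid A+B=n}$. Combined with the negative-association inequality you already have, this finishes the proof in two lines. Replacing your open ``crux'' paragraph with this monotonicity argument would make the proposal complete.
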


\begin{corollary}[Corollary 5.5 from \cite{KKO21}]\label{lem:SRA=nA}
Let $\mu:2^{[n]}\to\R_{\geq 0}$ be a SR distribution. Let $A,B$ be two random variables corresponding to the number of elements sampled from two disjoint sets of elements such that $A \ge k_A$ with probability 1 and $B \ge k_B$ with probability 1. If $\P{A\geq n_A},\P{B\geq n_B}\geq \epsilon_1$ and $\P{A\leq n_A},\P{B\leq n_B}\geq \epsilon_2$, then, letting $n'_A = n_A-k_A, n'_B = n_B-k_B$,
\begin{align*}
	&\P{A=n_A | A+B=n_A+n_B}\geq \epsilon \min\{\frac{1}{n'_A+1},\frac{1}{n'_B+1}\},\\
&\P{A=n_A | A+B=n_A+n_B}\geq \min\left\{p_m, \epsilon(1-(\epsilon /p_m)^{1/\max\{n'_A,n'_B\}})\right\}
\end{align*}
where $\epsilon=\epsilon_1\epsilon_2$ and $p_m\leq\max_{k_A\leq k\leq n_A+n_B-k_B} \P{A=k | A+B=n_A+n_B}$ is a lower bound on the mode of $A$.
\end{corollary}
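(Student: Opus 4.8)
The plan is to pass to the conditional measure $\nu := \mu_{\mid A+B=n}$ (with $n:=n_A+n_B$), reduce both inequalities to estimates for a single log-concave sequence, and then do elementary bookkeeping. Put $X:=A-k_A$ and $a_k:=\PP{\nu}{X=k}$, and write $t:=a_{n'_A}=\PP{\nu}{A=n_A}$. Since $A\ge k_A$ and $B\ge k_B$ hold with probability $1$ (so $A\le n_A+n_B-k_B$ under $\nu$), the sequence $(a_k)$ is supported in $\{0,1,\dots,n'_A+n'_B\}$. Two facts drive the argument. First, \cref{lem:updowntruncation} gives $\PP{\nu}{A\ge n_A}=\PP{\nu}{B\le n_B}\ge\P{A\ge n_A}\,\P{B\le n_B}\ge\epsilon_1\epsilon_2=\epsilon$ and, symmetrically, $\PP{\nu}{A\le n_A}\ge\epsilon$; that is, $\sum_{k\ge n'_A}a_k\ge\epsilon$ and $\sum_{k\le n'_A}a_k\ge\epsilon$. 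Second, the law of $A$ under $\nu$ is a convolution of independent Bernoullis: marginalizing $\mu$ onto $S_A\cup S_B$ and then conditioning on its total rank being $n$ are both standard strongly Rayleigh--preserving operations (the latter since the total-degree-$n$ part of a multiaffine real stable polynomial is real stable), so \cref{cor:rankseqBS} applies with the set $S_A$. Hence $(a_k)$ is log-concave, in particular unimodal; let $m^\ast$ be its mode and $M\ge p_m$ its maximal value.

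For the first inequality I would split on the location of $m^\ast$. If $m^\ast\ge n'_A$ then $a_0,\dots,a_{n'_A}$ is non-decreasing, so $\epsilon\le\sum_{k\le n'_A}a_k\le(n'_A+1)\,t$; if $m^\ast\le n'_A$ then $a_{n'_A},\dots,a_{n'_A+n'_B}$ is non-increasing, and there are at most $n'_B+1$ of these terms, so $\epsilon\le\sum_{k\ge n'_A}a_k\le(n'_B+1)\,t$. In either case $t\ge\epsilon\min\{1/(n'_A+1),\,1/(n'_B+1)\}$.

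For the second inequality I again split on $m^\ast$, say $m^\ast\ge n'_A$ (the case $m^\ast\le n'_A$ is symmetric, with the roles of $n'_A$ and $n'_B$ interchanged). Log-concavity makes the ratios $\rho_k:=a_{k+1}/a_k$ non-increasing, so with $s:=a_{n'_A-1}/a_{n'_A}\le1$ one gets $a_k\le t\,s^{\,n'_A-k}$ for all $k\le n'_A$, hence $\epsilon\le\sum_{k\le n'_A}a_k\le t/(1-s)$ and therefore $s\ge1-t/\epsilon$ (the degenerate case $s=1$ forces $t\ge p_m$ below). Walking from $n'_A$ up to $m^\ast$ (at most $n'_B$ steps, as $m^\ast\le n'_A+n'_B$) and again using monotonicity of the ratios gives $p_m\le M\le t\,s^{-n'_B}$, i.e.\ $s\le(t/p_m)^{1/n'_B}$. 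Combining $1-t/\epsilon\le s\le(t/p_m)^{1/n'_B}$ with the trivial bound $\epsilon\bigl(1-(\epsilon/p_m)^{1/n'_B}\bigr)\le\epsilon$, a short case check — if $t<p_m$ and $t<\epsilon\bigl(1-(\epsilon/p_m)^{1/n'_B}\bigr)$ both held, these two inequalities would force $\epsilon<t$, contradicting $t<\epsilon$ — yields $t\ge\min\bigl\{p_m,\,\epsilon(1-(\epsilon/p_m)^{1/n'_B})\bigr\}$. Since $\epsilon/p_m\le1$ in the only nontrivial case, enlarging the exponent's denominator to $\max\{n'_A,n'_B\}$ only weakens this bound, so it holds uniformly over both mode cases and is precisely the second claimed inequality.

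There is no serious obstacle here — the statement is essentially a corollary of \cref{lem:updowntruncation} together with log-concavity of Bernoulli convolutions; the only points needing care are invoking the correct strongly-Rayleigh closure properties to obtain the Bernoulli-convolution structure of $A$ given $A+B=n$, and, in the second bound, keeping straight which of $n'_A,n'_B$ controls the geometric decay toward the mode — and hence which of the two tail estimates to deploy — in each of the two mode cases.
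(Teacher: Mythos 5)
The paper does not supply its own proof of this statement: it is quoted verbatim from Corollary 5.5 of \cite{KKO21} and invoked as a black box, so there is nothing in the paper to compare your argument against line by line. Taking your proof on its own terms, it is correct, and the route you chose — reduce to the conditional measure $\nu=\mu_{\mid A+B=n_A+n_B}$, establish $\PP{\nu}{A\ge n_A},\PP{\nu}{A\le n_A}\ge\epsilon$ from \cref{lem:updowntruncation}, identify the conditional law of $A$ as a Bernoulli convolution (hence a log-concave, unimodal pmf with non-increasing ratios) via marginalization and cardinality-truncation both preserving strong Rayleigh, and then split on the mode location — is the natural one and matches the spirit of the original argument in \cite{KKO21}. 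The chain for the second inequality ($s\ge 1-t/\epsilon$ from the geometric tail bound, $s\le(t/p_m)^{1/n_B'}$ from walking up to the mode in at most $n_B'$ steps, contradiction if both $t<p_m$ and $t<\epsilon(1-(\epsilon/p_m)^{1/n_B'})$, and relaxation of the exponent to $1/\max\{n_A',n_B'\}$ using $\epsilon\le p_m$ in the only nontrivial regime) all checks out, and you correctly note the degenerate $s=1$ case feeds directly into $t\ge p_m$.

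Two small points worth making explicit if you were to write this up. First, the strong-Rayleigh closure you invoke (truncating to a fixed total cardinality) deserves a citation — it is Corollary 4.18 of Borcea--Brändén--Liggett, and is slightly less ``standard'' than projection, so it is the one step a skeptical reader would want pinned down. Second, when $n_A'=n_B'=0$ the exponent $1/\max\{n_A',n_B'\}$ is undefined, but then $A=n_A$ with $\nu$-probability $1$ so the bound is vacuous; and if exactly one of them is $0$, the support of the conditional law forces the mode into the case where the positive one is the step count, so your case split never divides by zero. Neither of these is a gap, just bookkeeping to spell out.
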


The following proof is essentially identical to the the proof of Lemma A.1 in \cite{KKO21}, we simply parameterize the statement by $k$.
\alemma*
\begin{proof}
We first condition on $\mu'$, $C_T=0$, $u\cup v$  to be a tree and let $\nu$ be the resulting SR measure on edges in $A,B,V$. Since $\bbe$ is 2-2 good, by Lemma 5.15 in \cite{KKO21} and negative association,
\begin{align*}
\PP{\nu}{(\delta(u)_{-\bbe})_T+V_T\leq 2} \geq \P{(\delta(u)_{-\bbe})_T+V_T\leq 2} - \P{C_T=0} \geq 0.4\eps_{1/2} - 2\eps_{1/1}-\eps_\eta\geq 0.22\eps_{1/2},	
\end{align*}
where we used $\eps_{1/1}\le \eps_{1/2}/12$.
Letting $p_i = \P{(\delta(u)_{-\bbe})_T+V_T = i}$, we therefore have $p_{\le 2} \ge 0.22 \eps_{1/2}$. In addition, by
\cref{thm:hoeffding}, $p_3 \ge 1/4$. 
If $p_2 < 0.2 \eps_{1/2}$, then from
$p_2/p_3 \le 0.8 \eps_{1/2}$, we could use log-concavity to derive a contradiction to $p_{\le 2} \ge 0.22 \eps_{1/2}$.
Therefore, we must have
$$\PP{\nu}{A_T+B_T+V_T=3}=\PP{\nu}{(\delta(u)_{-\bbe})_T+V_T=2}\geq 0.2\eps_{1/2}.$$

Next, notice since $\P{u,v,u\cup v\text{ trees}, C_T=0}\geq 0.49$, by the lemma's assumption, $\PP{\nu}{\bbe(B)}\leq 2.01\eps_{1/2}$. Therefore, 
$$\EE{\nu}{B_T+V_T}\leq x(V)+x(B)+1.01\eps_{1/2}+2\eps_{1/1}+\eps_\eta \leq 2.51.$$
So, by Markov, $\PP{\nu}{B_T+V_T\leq 2}\geq 0.15$. 
Finally, by negative association, 
$$\PP{\nu}{A_T+V_T\leq 2} \geq \PP{\nu}{(A_{-\bbe})_T+V_T\leq 1} \geq \P{(A_{-\bbe})_T+V_T\leq 1} - \P{C_T=0} \geq (k-0.2)\eps_{1/2}$$
where we used the lemma's assumption. 

Using arguments from the proof of Lemma 5.21 in \cite{KKO21}, we have $$\PP{\nu}{A_T+B_T=2 \mid A_T+B_T+V_T=3} \geq 0.12$$

Now by \cref{lem:updowntruncation}, 
\begin{align*}
    \PP{\nu}{B_T \ge 1 \mid A_T+B_T+V_T = 3} &= \PP{\nu}{A_T+B_T \le 2 \mid A_T+B_T+V_T=3} \\
    &\ge \PP{\nu}{A_T+B_T \le 2}\PP{\nu}{V_T \ge 1} \ge 0.63(k-0.2)\eps_{1/2}
\end{align*}
using that similar to Lemma 5.21 in \cite{KKO21} we can show $\PP{\nu}{V_T \ge 1} \ge 0.63$. We also use similar arguments to show $\PP{\nu}{B_T \le 1 \mid A_T+B_T+V_T=3} \ge 0.147$.

Therefore, by \cref{lem:SRA=nA}, $\PP{\nu}{B_T=1 \mid A_T+B_T=2,V_T=1} \geq 0.147 \cdot 0.63(k-0.2)\eps_{1/2} \cdot p_m$ where $p_m \le \max_{k \in \{0,1,2\}}\P{B=k \mid A_T+B_T=2,V_T=1}$. For $k \le 100$ and $\epsilon_{1/2} \le 0.0002$, our bound for $\PP{\nu}{B_T=1 \mid A_T+B_T=2,V_T=1}$ is at most 0.001 for $p_m =1$. Therefore, we may assume it is at most 0.001. By log concavity we can set $p_m = 0.998$. So,
$$
	\P{\bbe \text{ 2-1-1 happy}} \geq (0.092(\max\{k,100\}-0.2)\eps_{1/2})0.12(0.2\eps_{1/2})0.498\geq 0.001(\max\{k,100\}-0.2)\eps_{1/2}^2
$$
as desired.
\end{proof}

Here we use the additional assumptions provided by \cref{lem:eABpositive211} to improve upon the original bound.
\begin{lemma}\label{lem:eABpositive211-app}
Let $\bbe={\bf (u,v)}$ be a good half edge bundle and 	let $A,B,C$ be the degree partitioning of $\delta(u)$.  Suppose $\eps_{1/2}\leq 0.001$ and $x_{\bbe(A)},x_{\bbe(B)}\geq \eps_{1/2}$. Let $\nu$ be the measure conditioned on $C_T=0$, $u,v$ and $u\cup v$ be trees. Suppose that either $\PP{\nu}{X_T=Y_T=1} \ge 0.06\epsilon_{1/2}$, or 
\begin{align*}
&\EE{\nu}{X_T}, \EE{\nu}{Y_T}\in [1.489,1.51], \qquad \EE{\nu}{X_T+Y_T} \in [2.999,3.01]
\end{align*}
Then,
$$ \P{\bbe \text{ 2-1-1 happy w.r.t } u} \geq 0.0485\eps_{1/2}^2. $$
\end{lemma}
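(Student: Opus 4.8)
The plan is to work throughout in the measure $\nu$ and reduce ``$\bbe$ 2-1-1 happy with respect to $u$'' to an event about three pairwise disjoint edge sets. Recall from \cref{lem:eABpositive211} that $X = A_{-\bbe}\cup B_{-\bbe}$ and $Y = \delta(v)_{-\bbe}$. Since $u$ and $v$ are disjoint vertex sets, $\delta(u)\cap\delta(v)=\bbe$, so $A$, $B$ (both inside $\delta(u)$), and $Y$ are pairwise disjoint. Under $\nu$ the vertex sets $u$, $v$, $u\cup v$ all induce subtrees of the sampled spanning tree $T$, so exactly one edge of $\bbe$ lies in $T$, and since $C_T=0$ that edge lies in $A$ or in $B$; hence $\delta(v)_T = 1+Y_T$ and $A_T+B_T = 1+X_T$ on the support of $\nu$. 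Therefore
\[
    \P{\bbe \text{ 2-1-1 happy w.r.t } u} \geq \P{C_T=0,\ u,v,u\cup v\ \text{trees}}\cdot \PP{\nu}{A_T=1,\, B_T=1,\, Y_T=1},
\]
and the first factor is at least $1-x(C)-2\eps_\eta\ge 0.49$ by \cref{lem:NAhomogeneity} and \eqref{eq:ABCDegParx}. It remains to lower bound $\PP{\nu}{A_T=1,\, B_T=1,\, Y_T=1}$, noting $\{A_T=1,B_T=1,Y_T=1\}\subseteq\{X_T=1,Y_T=1\}$.

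The first step is to lower bound $\PP{\nu}{X_T=Y_T=1}$. In the first branch of the hypothesis this is $\ge 0.06\eps_{1/2}$ by assumption. In the second branch the $\nu$-expectations of $X_T$, $Y_T$, and $X_T+Y_T$ are pinned to $[1.489,1.51]$, $[1.489,1.51]$, $[2.999,3.01]$; here I would use \cref{cor:rankseqBS} to represent each of $X_T+Y_T$, $X_T$, $Y_T$ as a sum of independent Bernoullis and \cref{thm:hoeffding} to obtain explicit absolute lower bounds on $\PP{\nu}{X_T+Y_T=2}$, $\PP{\nu}{X_T\ge 1}$, $\PP{\nu}{X_T\le 1}$ and the $Y_T$ analogues, and then, since $X$ and $Y$ are disjoint and $\nu$ is strongly Rayleigh, apply \cref{lem:SRA=nA} to the blocks $X,Y$ (both satisfy $X_T,Y_T\ge 0$ a.s., so the truncation parameter is $1$) to bound
\[
    \PP{\nu}{X_T=1,\, Y_T=1} = \PP{\nu}{X_T+Y_T=2}\cdot\PP{\nu}{X_T=1\mid X_T+Y_T=2}
\]
from below by a product of those constants, which stays above an absolute constant in this branch.

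The second step is to pass from $\{X_T=1,Y_T=1\}$ to $\{A_T=1,B_T=1,Y_T=1\}$: one must show the unique edge of $(A_{-\bbe}\cup B_{-\bbe})\cap T$ and the unique edge of $\bbe\cap T$ fall on opposite sides of the partition $A\sqcup B$ with conditional probability bounded below by an absolute constant. Writing $W=A\cup B$ (disjoint from $Y$, with $W_T=1+X_T$), I would condition on $\{W_T=2,\ Y_T=1\}$ and bound $\PP{\nu}{A_T=1\mid W_T=2,\, Y_T=1}$ from below using \cref{lem:SRA=nA} applied to the disjoint blocks $A,B$, iterating \cref{lem:updowntruncation} to peel off the $Y_T=1$ conditioning; the required one-sided estimates for $A_T$ and $B_T$ follow from $x(A),x(B)\in[1-\eps_{1/1},1+3\eps_\eta]$ in \eqref{eq:ABCDegParx} (adjusted for the conditionings via \cref{lem:NAhomogeneity}), where $x_{\bbe(A)},x_{\bbe(B)}\ge\eps_{1/2}$ is what keeps $\EE{\nu}{A_T}$ and $\EE{\nu}{B_T}$ bounded away from $0$. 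This gives $\PP{\nu}{A_T=1,\, B_T=1,\, Y_T=1}\ge c\cdot\PP{\nu}{X_T=Y_T=1}$ for an explicit $c>0$. Combining everything, in the first branch we obtain at least $0.49\,c\cdot 0.06\,\eps_{1/2}$, which exceeds $0.0485\,\eps_{1/2}^2$ once $\eps_{1/2}\le 0.001$, and in the second branch an absolute constant, comfortably above $0.0485\,\eps_{1/2}^2$.

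The main obstacle is the second (pinned) branch: the windows $[1.489,1.51]$ and $[2.999,3.01]$ are extremely tight, and one needs \cref{thm:hoeffding} to squeeze out constants sharp enough to clear the factor $0.0485$, while also checking at each stage that the conditional measure in play (obtained by conditioning on $C_T=0$, on the three subtree events, and on a linear statistic such as $W_T+Y_T$ or $Y_T$ being fixed) remains strongly Rayleigh, so that \cref{lem:updowntruncation} and \cref{lem:SRA=nA} may be invoked; alternatively one can sidestep this by working directly in $\nu$ and using only the identities of \cref{lem:updowntruncation} relating conditioned and unconditioned probabilities.
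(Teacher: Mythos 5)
The proposal follows the right outline — bound $\PP{\nu}{X_T=Y_T=1}$ first, then pass to $\{A_T=1,B_T=1,Y_T=1\}$ — but there is a genuine quantitative gap in the second step that breaks the arithmetic.

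You claim that $\PP{\nu}{A_T=1,B_T=1,Y_T=1}\ge c\cdot\PP{\nu}{X_T=Y_T=1}$ for an \emph{absolute} constant $c>0$, and you conclude that the first branch yields $0.49\,c\cdot 0.06\,\eps_{1/2}$, a bound linear in $\eps_{1/2}$, which then "exceeds $0.0485\,\eps_{1/2}^2$." This is not right. Under $\nu$, exactly one edge of $\bbe$ is in $T$ and $X_T=W_T-1$; to have $A_T=B_T=1$, the unique $\bbe$-edge must land on the opposite side of the $A/B$ split from the unique $X$-edge. The lower bound we have is only $x_{\bbe(A)},x_{\bbe(B)}\ge\eps_{1/2}$ while $x(\bbe)\approx 1/2$, so the conditional probability that the $\bbe$-edge falls on the required side can be as small as $\Theta(\eps_{1/2})$ — the paper computes it as $\ge\frac{\eps_{1/2}-2\eps_\eta}{1/2+1.3\eps_{1/2}}\ge 1.99\,\eps_{1/2}$, not a constant. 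Your ``explicit $c>0$'' is therefore really $\Theta(\eps_{1/2})$, and the first branch gives a bound of order $\eps_{1/2}^2$, not a linear-in-$\eps_{1/2}$ quantity that dominates $\eps_{1/2}^2$. With the correct factor, the bookkeeping is tight: the paper needs exactly $0.49\cdot 1.99\eps_{1/2}\cdot 0.0498\eps_{1/2}\ge 0.0485\eps_{1/2}^2$, so the constants in front matter and cannot be swept under a rug. The same over-claim affects your second branch, where you assert the result is ``an absolute constant, comfortably above $0.0485\,\eps_{1/2}^2$''; in fact the paper only extracts $\PP{\nu}{X_T=Y_T=1}\ge 0.0498\eps_{1/2}$ there (via the log-concavity dichotomy on $\PP{\nu}{X_T+Y_T=2}$, Hoeffding with at most three Bernoullis, and the $0.2\eps_{1/2}$ lower bound on $\PP{\nu}{X_T+Y_T=2}$ coming from Lemma 5.15 of \cite{KKO21}), again a quantity linear in $\eps_{1/2}$, not constant.

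Beyond the constant, the mechanism you propose for the conversion step — conditioning on $\{W_T=2,Y_T=1\}$ and invoking \cref{lem:SRA=nA} on the blocks $A,B$ while peeling off the $Y_T=1$ conditioning with \cref{lem:updowntruncation} — is plausible but substantially more delicate than what the paper actually uses, and it is not clear it produces a factor as sharp as $1.99\eps_{1/2}$. The paper instead observes directly that under $\nu$ the choice of which $\bbe$-edge is in $T$ is independent of the edges in $X\cup Y$, so given $\{X_T=Y_T=1\}$ one simply ``corrects the parity'' by paying the probability that the $\bbe$-edge lies in $\bbe(A)$ (resp.\ $\bbe(B)$); this yields the clean multiplicative factor immediately. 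If you do not have that independence observation available, you would need to re-derive the $\ge 1.99\eps_{1/2}$ factor by other means, which your outline does not do.
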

\begin{proof}
We break the proof into two stages.
\paragraph{First we show the bound given $\PP{\nu}{X_T=Y_T=1} \ge 0.0498\epsilon_{1/2}$.} Let ${\cal E}$ be the event $\{X_T=Y_T=1\}$ in the conditional measure $\nu$.
Note that in $\nu$ we always choose exactly 1 edge from the $\bbe$ bundle and that is independent of edges in $X,Y$, in particular the above event. Therefore, we can  correct the parity of $A,B$ by choosing from $e_A$ or $e_B$.
It follows that
$$ \P{\bbe \text{ 2-1-1 happy w.r.t $u$}} \geq \PP{\nu}{{\cal E}}(1.99\eps_{1/2}) 0.49 \geq 0.0485\eps_{1/2}^2,$$
where we used that the probability $C_T=0$ and $u,v,u \cup v$ are subtrees with probability at least 0.49, $\EE{\nu}{\bbe(A)_T}  \geq 1.99\eps_{1/2}$, and $\EE{\nu}{\bbe(B)_T}  \geq 1.99\eps_{1/2}$. To see why these latter inequalities hold, observe that conditioned on $u,v$ trees, we always sample at most one edge between $u,v$. Therefore, since under $\nu$ we choose exactly one edge between $u,v$, the probability of choosing from $e(A)$ (and similarly choosing from $\bbe(B)$) is at least 
$$\frac{\EE{}{\bbe(A)_T \mid u,v\text{ trees}, C_T=0}}{\P{\bbe \mid u,v\text{ trees}, C_T=0}} \geq \frac{x_{\bbe(A)}-2\eps_\eta}{x_{\bbe}+3\eps_{1/1}}\geq \frac{\eps_{1/2}-2\eps_{\eta}}{1/2+1.3\eps_{1/2}}\geq 1.99\eps_{1/2}$$
as desired.

\paragraph{Second we show $\PP{\nu}{X_T=Y_T=1} \ge 0.0498\epsilon_{1/2}$.} If this is true by the lemma's assumption, we are done, otherwise we have the bounds on the expectations:
\begin{align*}
&\EE{\nu}{X_T}, \EE{\nu}{Y_T}\in [1.489,1.51], \qquad \EE{\nu}{X_T+Y_T} \in [2.999,3.01]
\end{align*}

First suppose that $\PP{\nu}{X_T+Y_T=2} \ge 0.001 \ge \epsilon_{1/2}$. Applying \cref{lem:SRA=nA} (and using \cref{thm:hoeffding} to bound the probability $X_T,Y_T$ are at least 1 and at most 1), we obtain $\PP{\nu}{X_T= 1 | X_T+Y_T=2}\geq \frac{1}{2}(0.245)(0.77) \ge 0.094$ and therefore again our desired bound $\PP{\nu}{X_T=Y_T=1} \ge 0.06 \epsilon_{1/2}$ would hold. Therefore we may assume that $\PP{\nu}{X_T+Y_T=2} \le 0.001$. By log concavity and the fact that $\EE{\nu}{X_T+Y_T} \ge 2.999$, it must be that $\PP{\nu}{X_T+Y_T=3} \ge 0.995$.

We use this information to improve the bounds on $\PP{\nu}{X_T\geq 1}, \PP{\nu}{Y_T\geq 1}$ and $\PP{\nu}{X_T\leq 1}, \PP{\nu}{Y_T\leq 1}$. First, condition on $X_T+Y_T=3$, this occurs with probability at least 0.995. Call the resulting measure $\nu'$. By log concavity, $$1.48 \le \EE{\nu'}{X_T},\EE{\nu'}{Y_T} \le 1.52$$
Finally we apply \cref{thm:hoeffding} and use the fact that there are at most 3 Bernoullis. This demonstrates that
\begin{align*}
&\PP{\nu}{X_T\geq 1}, \PP{\nu}{Y_T\geq 1}\geq 0.864 \\
&\PP{\nu}{X_T\leq 1}, \PP{\nu}{Y_T\leq 1}\geq 0.419
\end{align*}
Where we use that in $\nu'$ the quantities are bounded by $0.869$ and $0.422$ respectively. We now turn to the fact that $\bbe$ is 2-2 good to bound the probability the sum is 2. By Lemma 5.15 from \cite{KKO21} and stochastic dominance, 
$$\PP{\nu}{X_T + Y_T\leq 2}\geq \P{(\delta(u)_{-\bbe})_T+Y_T\leq 2} - \P{C_T=0} \geq 0.4\eps_{1/2} - 2\eps_{1/1}-\eps_\eta\geq 0.22\eps_{1/2},$$
where we used $\eps_{1/1}<\eps_{1/2}/12$.
It follows by log-concavity of $X_T+Y_T$ and the fact that $X_T+Y_T \ge 1$ with probability 1 under $\nu$ (as we sample a tree) that $\PP{\nu}{X_T+Y_T=2}\geq 0.2\eps_{1/2}$. Finally, we obtain $\PP{\nu}{X_T=1 \mid X_T+Y_T=2} \ge 0.249$. This is because of the following. WLOG, suppose we have $\EE{\nu}{X_T \mid X_T+Y_T=2} \le 1$ (otherwise we bound $\PP{\nu}{Y_T=1 \mid X_T+Y_T=2}$). Now by \cref{lem:updowntruncation}, we have that $\PP{\nu}{X_T \ge 1 \mid X_T+Y_T=2} \ge 0.362$. If $\PP{\nu}{X_T = 1 \mid X_T+Y_T=2} \ge 0.25$ we are done already, therefore $\PP{\nu}{X_T=2 \mid X_T+Y_T=2} \ge 0.112$. This implies that $\EE{\nu}{X_T \mid X_T+Y_T=2} \ge 0.474$, as:
$$\EE{\nu}{X_T \mid X_T+Y_T=2} = \PP{\nu}{X_T \ge 1 \mid X_T+Y_T=2} + \PP{\nu}{X_T=2 \mid X_T+Y_T=2} \ge 0.474$$
Now applying \cref{thm:hoeffding} with at most two Bernoullis, we obtain the bound. 

Therefore, $\PP{\nu}{X_T=Y_T=1} \ge (0.2\eps_{1/2})(0.249) = 0.0498\epsilon_{1/2}$ as desired.
\end{proof}

\subsection{A Parameterized TSP Bound}

The following statement is an immediate consequence of \cite{KKO21,KKO22}:
\TSPblackbox*
To see this, one can examine the proof of the ``main payment theorem," Theorem 4.33, of \cite{KKO21}. As it is a lengthy statement with many definitions not given here, we refer the reader to \cite{KKO21}. In the proof of this theorem, there is a constant $\epsilon_P$ is set to be $1.56 \cdot 10^{-6}p$, where $p$ is the minimum probability guaranteed by Corollary 5.9, Lemma 5.25, and Lemma 5.28. Then, Lemma 5.25 and Lemma 5.28 are simply aggregations of the bounds in Lemmas 5.21, 5.22, 5.23, 5.24, and 5.27, which are the lemmas we improve in this paper. It is also required that $p \le 3\epsilon_{1/2}$ and $\epsilon_{1/2} \le 0.0002$, as this the threshold for the definition of a good edge from Definition 5.13, which is related to Lemma 5.16 and Lemma 5.17. Thus it is sufficient that $p \le 10^{-4}$ if one does not modify these lemmas. 

Therefore, given the assumptions of the lemma, Theorem 4.33 holds without any modification of the proof for $\epsilon_P = 1.56 \cdot 10^{-6}p$. We can then plug the improved version of the payment theorem into Theorem 6.1 from \cite{KKO22}, where one sets $\eta = \min\{10^{-12}, \frac{\epsilon_P}{750}\}$, $\beta = \frac{\eta}{4+2\eta}$ and achieves an approximation ratio (and integrality gap) of 
$$\frac{3}{2}-\frac{\epsilon_P}{6}\beta+\frac{\epsilon_P\eta}{100} \ge \frac{3}{2}-\frac{\epsilon_P^2}{25000} \ge \frac{3}{2}-9.7 p^2\cdot 10^{-17}$$ 
As long as $\epsilon_P/750 < 10^{-12}$ which it is for $p \le 10^{-4}$.


\section{Omitted Proofs from \cref{sec:main_proofs}}

\subsection{Proof of \cref{lem:log-concave_cap}} \label{lem:log-concave_cap_proof}

Fix any $A,B \in \NHMat_n^d(\bm\alpha)$, and let $p,q,f$ be the polynomials associated to $A,B, \frac{A+B}{2}$ respectively. By the AM-GM inequality, we compute
\[
\begin{split}
    \cpc_{\bm\kappa}(f) &= \inf_{\bm{x} > 0} \frac{\prod_{i=1}^d \left(\frac{a_{i,n+1} + b_{i,n+1}}{2} + \sum_{j=1}^n \frac{a_{i,j} + b_{i,j}}{2} \cdot x_j\right)}{\bm{x}^{\bm\kappa}} \\
        &= \inf_{\bm{x} > 0} \frac{\prod_{i=1}^d \left[\frac{\left(a_{i,n+1} + \sum_{j=1}^n a_{i,j} x_j\right) + \left(b_{i,n+1} + \sum_{j=1}^n b_{i,j} x_j\right)}{2}\right]}{\bm{x}^{\bm\kappa}} \\
        &\geq \inf_{\bm{x} > 0} \left[\frac{\prod_{i=1}^d \left(a_{i,n+1} + \sum_{j=1}^n a_{i,j} x_j\right)}{\bm{x}^{\bm\kappa}} \cdot \frac{\prod_{i=1}^d \left(b_{i,n+1} + \sum_{j=1}^n b_{i,j} x_j\right)}{\bm{x}^{\bm\kappa}}\right]^{\frac{1}{2}} \\
        &\geq \left[\cpc_{\bm\kappa}(p) \cdot \cpc_{\bm\kappa}(q)\right]^{\frac{1}{2}}
\end{split}
\]

\subsection{Proof of \cref{lem:left_leaves}} \label{lem:left_leaves_proof}

We prove this by induction, where the base case is any path graph. For this case, we have $m \in \{n-1,n,n+1\}$. If $m = n-1$ then $G$ has 0 left leaves, if $m = n$ then $G$ has 1 left leaf, and if $m = n+1$ then $G$ has two left leaves. Thus the desired result holds in this case.

For the inductive step, $G$ is not a path graph. Let $v$ be any leaf of $G$, and construct a new graph $G'$ as follows. Let $v_0 := v$ and remove $v_0$ from $G_0 := G_0$ to create the graph $G_1$. Let $v_1$ be the one neighbor of $v_0$ in $G$. If $v_1$ is a leaf or vertex of degree 0 in $G_1$, then remove $v_1$ from $G_1$ to create the graph $G_2$. If $v_1$ was a vertex of degree 0, then stop and define $G' := G_1$. Otherwise let $v_2$ be the one neighbor of $v_1$, and continue this process inductively until $v_k$ is not a leaf or a vertex of degree 0 in $G_k$. Once the process stops, define $G' := G_k$. Note that $G'$ is a bipartite forest which has no vertices of degree 0, and $G'$ is non-empty since $G$ is not a path graph. We now have two cases: $v$ is a left leaf of $G$, or $v$ is a right leaf of $G$.

First suppose $v$ is a left leaf of $G$. Then for some $i$, $G'$ has $m-i$ left vertices and at most $n-i+1$ right vertices. Thus by induction, the number of left leaves of $G$ is at least
\[
    1 + (m-i) - (n-i+1) + 1 = m - n + 1
\]
since $v$ is a left leaf. Thus the result holds in this case.

Next suppose $v$ is a right leaf of $G$. Then for some $i$, $G'$ has $m-i$ left vertices and at most $n-i$ right vertices. Thus by induction, the number of left leaves of $G$ is at least
\[
    (m-i) - (n-i) + 1 = m - n + 1.
\]
Thus the result holds in this case as well.

\subsection{Proof of \cref{lem:prod_sum}} \label{lem:prod_sum_proof}

We prove the desired result by induction, where the case of $d=1$ is trivial. Now note that
\[
    \frac{1-\sum_{i=1}^d c_i}{1-c_d} = \frac{(1-c_d)\left(1 - \sum_{i=1}^{d-1} c_i\right) + c_d\left(1 - \sum_{i=1}^{d-1} c_i\right) - c_d}{1-c_d} = 1 - \sum_{i=1}^{d-1} c_i - \frac{c_d}{1-c_d} \sum_{i=1}^{d-1} c_i \leq 1 - \sum_{i=1}^{d-1} c_i.
\]
Thus by induction we have
\[
    1-\sum_{i=1}^d c_i \leq (1-c_d) \left(1 - \sum_{i=1}^{d-1} c_i\right) \leq (1-c_d) \prod_{i=1}^{d-1} (1-c_i),
\]
and this completes the proof.

\subsection{Proof of \cref{lem:bound_kappa_0}} \label{lem:bound_kappa_0_proof}

Define $q_0(\bm{x}) = q_0(x_1,\ldots,x_{n-1})$ via
\[
    q_0(\bm{x}) = p(x_1,\ldots,x_{n-1},0),
\]
so that
\[
    \cpc_{\bm\kappa}(p) = \cpc_{\bm\gamma}(q_0).
\]
Note that $q_0$ is the polynomial associated to a $d \times n$ matrix $A$, where the row sums of $A$ are given by $1 - m_{in}$ for all $i$. Note that $m_{in} \leq \alpha_n \leq 1-\epsilon < 1$ implies $1 - m_{in} > 0$ for all $i$. We now construct a new matrix $B \in \NHMat_{n-1}^d(\bm\beta)$ by dividing row $i$ of $A$ by $1 - m_{in}$ for all $i$. Defining $q(\bm{x})$ to be the polynomial associated to the matrix $B$, we have
\[
    \cpc_{\bm\kappa}(p) = \cpc_{\bm\gamma}(q_0) = \prod_{i=1}^d (1 - m_{in}) \cdot \cpc_{\bm\gamma}(q) \geq \epsilon \cdot \cpc_{\bm\gamma}(q)
\]
by \cref{lem:prod_sum}. Finally, for all $S \subseteq [n-1]$ we compute
\[
\begin{split}
    \sum_{j \in S} (\beta_j - \gamma_j) &= \sum_{j \in S} \left(\alpha_j - \kappa_j + \sum_{i=1}^d \left(\frac{m_{ij}}{1-m_{in}} - m_{ij}\right)\right) \\
        &= \sum_{i=1}^d \frac{m_{in}}{1-m_{in}} \sum_{j \in S} m_{ij} + \sum_{j \in S} (\alpha_j-\kappa_j) \\
        &\leq \sum_{i=1}^d \frac{m_{in}}{1-m_{in}} (1-m_{in}) + \sum_{j \in S} (\alpha_j-\kappa_j) \\
        &= (\alpha_n-\kappa_n) + \sum_{j \in S} (\alpha_j-\kappa_j).
\end{split}
\]
The other inequality then follows from the fact that $\sum_{i=1}^d \frac{m_{in}}{1-m_{in}} \sum_{j \in S} m_{ij} \geq 0$.

\subsection{Proof of \cref{lem:bound_kappa_1}} \label{lem:bound_kappa_1_proof}

Let $i_0$ be the row index of the one non-zero entry in column $n$. Note that $\epsilon \leq \alpha_n = m_{i_0,n} \leq 1$. Define $q(\bm{x}) = q(x_1,\ldots,x_{n-1})$ via
\[
    p(\bm{x}) = \left(m_{i_0,n+1} + \sum_{j=1}^n m_{i_0,j} x_j\right) \cdot q(\bm{x}).
\]
That is, $q(\bm{x})$ is the polynomial associated to the matrix obtained by removing row $i_0$ of $M$. We then have
\[
    \cpc_{\bm\kappa}(p) \geq \cpc_{\bm{e}_n}\left(m_{i_0,n+1} + \sum_{j=1}^n m_{i_0,j} x_j\right) \cdot \cpc_{\bm\gamma}(q) = m_{i_0,n} \cdot \cpc_{\bm\gamma}(q) \geq \epsilon \cdot \cpc_{\bm\gamma}(q).
\]
Note that $q$ is the polynomial associated to a $(d-1) \times n$ matrix $A$ with row sums all equal to 1. Let $\bm\beta$ be the vector of the first $n-1$ column sums of this matrix, so that $q \in \NHProd_{n-1}^{d-1}(\bm\beta)$. For all $S \subseteq [n-1]$ we have
\[
\begin{split}
    \sum_{j \in S} (\gamma_j-\beta_j) &= \sum_{j \in S} (\kappa_j-\alpha_j+m_{i_0,j}) \\
    &= \sum_{j \in S} m_{i_0,j} + \sum_{j \in S} (\kappa_j-\alpha_j) \\
        &\leq (1-m_{i_0,n}) + \sum_{j \in S} (\kappa_j-\alpha_j) \\
        &= (\kappa_n-\alpha_n) + \sum_{j \in S} (\kappa_j-\alpha_j).
\end{split}
\]
The other inequality then follows from the fact that $\sum_{j \in S} m_{i_0,j} \geq 0$.

\end{document}